\numberwithin{equation}{section}
\newtheorem{theorem}{Theorem}[section]
\newtheorem{corollary}[theorem]{Corollary}
\newtheorem{lemma}[theorem]{Lemma}
\newtheorem{proposition}[theorem]{Proposition}
\newtheorem{condition}{Condition}[section]
\newtheorem{assumption}{Assumption}[section]
\newtheorem{remark}{Remark}[section]
\newcommand{\e}{\mathbb{E}}
\newcommand{\br}{\mathbb{R}}
\newcommand{\pr}{\mathcal{P}}
\newcommand{\dd}{\partial}
\newcommand{\brn}{{\mathbb{R}^n}}
\newcommand{\de}{\Delta}
\title{A Variational Approach to Mean Field Type Control}
\author[a]{Alain Bensoussan\footnote{E-mail: axb046100@utdallas.edu}}
\author[b]{Ziyu Huang\footnote{E-mail: zyhuang19@fudan.edu.cn}}
\author[b]{Sheung Chi Phillip Yam\footnote{E-mail: scpyam@sta.cuhk.edu.hk}}
\affil[a]{\small \it International Center for Decision and Risk Analysis, Naveen Jindal School of Management, University of Texas at Dallas}
\affil[b]{\small \it Department of Statistics and Data Science, The Chinese University of Hong Kong}
\begin{document}
	
\maketitle

\begin{abstract}

Variational methods have been used to study stochastic control for long, see Bensoussan (1982) and Bensoussan-Lions (1978) for the early works. More precisely, variational approaches apply to the study of Bellman equation as a parabolic quasi-linear equation, when the nonlinearity affects only the gradient of the solution, and the second order derivative term is linear and not degenerate. This corresponds to a stochastic control problem, where the state equation is a diffusion process. The primary objective of this article is to extend this approach to mean field control theory, as an alternative to the current approach, which considers a coupled system of Hamilton-Jacobi (HJ) and Fokker-Planck (FP) equations, since the introduction of the theory by Lasry-Lions (2007). The main novelty lies in that the equation studied here is the HJB equation, neither the HJ-FP system nor the master equation; and our results also provide another perspective for probabilistic approaches; see Chassagneux-Crisan-Delarue (2022), Bensoussan-Wong-Yam-Yuan (2024), Bensoussan-Tai-Yam (2025) and Bensoussan-Huang-Tang-Yam (2025) for instance. Within the scope of the PDE methods, the advantage of this article is to solve a larger class of mean field control problems, with moderate regularity; and this kind of variational methods fairly require few conditions on the regularity of the coefficients. \\

\noindent{\textbf{Keywords:}} Mean field type control; Variational method; HJB Equation for Linear Functional Derivative of Value Function; Parabolic PDEs; Bellman equation; Master equation \\

\noindent {\bf Mathematics Subject Classification (2020):} 49N80; 49J55; 93E20; 91A16 \\

\end{abstract}



\section{Introduction}\label{sec:intro}

A mean field type control (MFTC) problem \cite{AB_book,MR4177795} is a control problem for a dynamic system whose state is a probability measure on $\brn$. The evolution is described by a McKean-Vlasov equation \cite{McKean,McKean_book}. If one does not assume that the probability measures have densities, the natural functional space for the state of the system is the Wasserstein space of probability measures \cite{AL,VC}. Since it is a metric space and not a vector space, the classical methods of control theory are difficult to apply. In particular, Dynamic Programming Principle (DPP) \cite{AB_JMPA,DMF,PH} is different, while keeping the same concepts, like the optimality principle. 

The analytical approach for the standard stochastic control problems is simply on the Bellman equation which is defined on $\brn$. While for the MFTC problem, since the state of the dynamic is a probability measure, this standard Bellman equation must be replaced with a set of complex equations. Namely, the mean field type Bellman equation, the system of Hamilton-Jacobi-Bellman (HJB) and Fokker-Planck (FP) equations, and the mean field type master equation. The Bellman equation for the MFTC problem (see \eqref{eq:2-21} for instance) is a partial differential equation (PDE) defined on the Wasserstein space of measures rather than on $\brn$. The Wasserstein space is an infinite-dimensional space, so traditional analytical approach cannot be applied to the mean field type Bellman equation directly. The Dynamic Programming Principle (DPP) for MFTC problems is also different from the standard one; and we refer to \cite{DMF,PH} for the study of the DPP for McKean-Vlasov control problem. The couple of HJB-FP equations is a forward-backward deterministic system of coupled ordinary PDEs, with the backward equation (HJB equation) being defined on $\brn$, while the forward equation (FP equation) describing the distribution flow (or the density flow) corresponding to the state. This kind of forward-backward PDEs in the mean field theory were introduced in \cite{JM3} as a coupled system of Hamilton-Jacobi (HJ) and Fokker-Planck equations for the mean field games (MFGs); this HJ-FP system for MFGs was then studied in the literature, such as \cite{GDA}; and we also refer to \cite{AB_book,AB_JMPA} for the HJB-FP systems associated with mean field type control problems. The mean field type master equation is a backward PDE whose solution is a function of both finite dimensional and infinite dimensional arguments; see \eqref{eq:3-3} for instance and also \cite{AB_JMPA,AB_SPA,MR3332710} in the literature. All these abovementioned three kinds of equations are related, but bring different information. Indeed, the solution of the master equation can be characterized as the linear functional derivative of value function for the MFTC problem; while the value function can be viewed as the (smooth or viscosity) solution of the Bellman equation under appropriate assumptions on the coefficients. Besides, the master equation gives a decouple field for the HJB-FP system for MFTC problems; also see \cite{AB_JMPA} for a detailed discussion. We shall show these relations in detail in Sections~\ref{sec:HJB} and \ref{sec:m} of this article; and we also refer the authors to \cite{AB_JMPA,CDLL} for more discussions the the relations among these PDEs. Since the MFTC problems involve the measure variable, an important element is the concept of the functional derivatives or Wasserstein derivatives, which is given in Section~\ref{sec:Basics} of this article; and we also refer to \cite{book_mfg} for more detailed discussion on the differential calculus for functionals on the Wasserstein space. For the study on the dynamical distribution process for MFTC problems and the FP equations for McKean-Vlasov SDEs, we refer to \cite{barbu2023uniqueness,ren2022linearization}. Moreover, for the study on the PDEs (Bellman equation, HJ-FP equations and master equations) associated with MFGs, we refer to \cite{AB_book,GW,GDA,GM}. 

The mathematical approach to MFTC problem and its related equations have been so far PDE techniques or probabilistic techniques. They are used mostly in smooth cases. For the study on the McKean-Vlasov forward-backward stochastic differential equations (FBSDEs) arsing from the maximum principle for MFTC problems, see \cite{AB10'',CR,book_mfg} for instance; and for the probabilistic approaches to the classical solution of the master equation, we refer to \cite{AB10,CJF}. For the PDE approaches to the classical solution of the Bellman equation, the HJB-FP system and the master equation for MFTC problems, see \cite{AB_book,AB_JMPA,CDLL} for the developments. There is also the technique of control on Hilbert spaces, considered by the authors, also under smoothness assumptions; see \cite{AB6,AB8,AB5} for instance. The Hilbert space approach is linked to the lifting method, introduced by Lasry and Lions \cite{JM3}. It consists of associating to a probability measure a random variable, whose probability law coincides with the probability measure. In this way, the Wasserstein metric space is replaced by the Hilbert space of square integrable random variables. For non-smooth cases, an attempt is to use the viscosity theory, which has been done by Pham-Wei \cite{PH}. They derive first an optimality principle of DPP in the Wasserstein space, and translate it into the Hilbert space of square-integrable random variables, then use a viscosity theory argument, to finally obtain a Bellman equation in the Hilbert space. Viscosity theory applies indeed to Bellman equations in Hilbert spaces. The aim of this article is to use an analytical approach to study the HJB equation (for the linear functional derivative of value function) associated with the MFTC problem by the variational methods, which has not been used in the mean field theory. And we shall see that the variational methods fairly require few smoothness conditions on the regularity of the coefficients: we require only the first-order differentiability of the Hamiltonian function and the second-order differentiability of the terminal cost function, which is the least stringent in contemporary literature.

Variational methods have been used to study standard stochastic control problems for long; we refer to Bensoussan \cite{1} and Bensoussan-Lions \cite{2} for the early works. More precisely, variational approaches apply to the study of Bellman equation as a parabolic quasi-linear equation, when the nonlinearity affects only the gradient of the solution, and the second order derivative term is linear and not degenerate. This corresponds to a stochastic control problem, where the state equation is a diffusion process, with control on the drift, but not on the volatility; see \cite{3} for more details. Variational methods have not been considered in the mean field context. The reason is certainly that they are linked to ordinary PDEs in finite dimensional spaces, while in the MFTC, the Bellman equation and master equation are connected with infinite dimensional spaces. The objective of this article is to extend the variational approach to MFTC problems, and we shall see that we can adapt variational techniques, by focusing on the HJB equation (see \eqref{eq:3-6}), with a fixed point mapping on probability measure flow (see \eqref{eq:4-9} and \eqref{eq:4-10}). Namely, we by-pass the FP equation, by using the probabilistic interpretation and Girsanov theorem. Indeed, we shall see that the probability measure flow can be explicitly defined as a non-local functional (see \eqref{eq:4-9} and \eqref{eq:4-10}) of the solution of the HJB equation for linear functional derivative of value function. Therefore, the HJB equation of MFTC problem appears as a HJB equation of stochastic control with a more complicated non-local Hamiltonian (see \eqref{eq:4-9} and \eqref{eq:4-10}). In this way, the extension is more apparent. The HJB equation will be the central equation of MFTC problems, from which we can recover all the other equations, namely, the master equation and Bellman equation. The main novelty lies in that we provide an alternative to the current approach for MFTC problems, which considers the HJB-FP system or the master equation, and our results also provide another perspective for our previous probabilistic approaches (see \cite{AB10,AB5,AB10''} for instance). The degree of the regularity on the coefficients required for the solvability of the HJB equation in this article is consistent with that for the solvability of the MFTC via the probabilistic approaches in our previous work. Within the scope of the PDE approach, our variational methods fairly require few conditions on the regularity of the coefficients, so it can solve a larger class of mean field control problems, with moderate regularity.

The rest of this article is organized as follows. In Section~\ref{sec:Basics}, we introduce the Wasserstein spaces and various derivatives of functionals. In Section~\ref{sec:MFTC}, we give the formal formulation and assumptions of our MFTC problem; and we also introduce the maximum principle, value function and the Bellman equation. In Section~\ref{sec:HJB}, we introduce the HJB equation for the linear functional derivative of value function and show its role. In Section~\ref{sec:main}, we give the well-posedness of the HJB equation in a rigorous way. In Section~\ref{sec:m}. we give the characterization of the probability measure flow, and use the HJB equation to recover the master equation and the Bellman equation for the MFTC problem, and also give the optimal control. 

\section{Basics of Mean Field Type Control}\label{sec:Basics}

\subsection{Functionals on the Wasserstein Space}

We work with the Wasserstein metric space of probabilities on $\brn$, denoted by $\pr(\brn)$. We denote by $\mathcal{P}_{2}(\brn)$ the space of all probability measures of finite second-order moments on $\brn$. If $d(\mu,\nu)$ denotes the metric of $\mathcal{P}_{2}(\brn),$ it can be shown that:
\begin{equation}\label{eq:1-100}
    d(m_{k},m)\rightarrow0\Longleftrightarrow 
    \left\{
    \begin{aligned}
        &m_{k}\rightharpoonup m, \quad \text{i.e. $m_{k}$ weakly convergence to $m$},\\
        &\int_{\brn}|x|^{2}dm_{k}(x)\rightarrow\int_{\brn}|x|^{2}dm(x);
    \end{aligned}
    \right.
\end{equation}
see Ambrosio-Gigli-Savar\'e \cite{AL} and Villani \cite{VC} for details.

We use the concept of functional derivative for functional $F:\pr_2(\brn)\to\br$. Namely $F$ has a functional derivative, denoted $\dfrac{d}{d\nu}F(m)(x)$ if 
\begin{equation*}
    F(m')-F(m)=\int_{0}^{1}\dfrac{d}{d\nu}F\left(m+\lambda(m'-m)\right)(x)\left(dm'(x)-dm(x)\right)d\lambda
\end{equation*}
for any $m,m'\in\pr_2(\brn)$. The function $x\mapsto\dfrac{d}{d\nu}F(m)(x)$ is defined up to a constant. We shall assume standard assumptions that $(x,m)\mapsto\dfrac{d}{d\nu}F(m)(x)$ is continuous, and 
\[
\int_{\brn}\left|\dfrac{d}{d\nu}F(m)(x)\right|^{2}dm(x)\leq c(m).
\]
Also we have 
\begin{align*}
    \dfrac{d}{d\lambda}F\left(m+\lambda(m'-m)\right)=\ & \int_{\brn}\dfrac{dF}{d\nu}\left(m+\lambda(m'-m)\right)(x)\left(dm'(x)-dm(x)\right), 
\end{align*}
for any $m,m'\in\pr_2(\brn)$; see Carmona-Delarue
\cite{book_mfg} for details, and also discussions on the relations between the linear functional derivatives and the $L$-derivatives.


We consider a probability space $(\Omega,\mathcal{F},\mathbb{P})$ on which there exists an $n$-dimensional standard Wiener process $w_s=\left(w^1_s,\dots, w^n_s\right),\ 0\le s\le T$. We define $\sigma$-algebras $\mathcal{W}_{t}^{s}=\sigma\left(w_\tau-w_t,\ t\leq\tau\leq s\right)$, and denote by $\mathcal{W}_{t}$ the filtration of $\sigma$-algebras $\mathcal{W}_{t}^{s},\ s\geq t$. In the article, we shall work with random fields $X\in L_{m}^{2}(\Omega,\mathcal{F},\mathbb{P};\brn)$ which is denoted by $X:\brn\times\Omega\ni(x,\omega)\mapsto X(x,\omega)\in\brn$ such that 
\begin{equation*}
    \e\int_{\brn}\left|X(x)\right|^{2}dm(x)<+\infty.
\end{equation*}
We denote by $L^2\left(t,T;L_{m}^{2}(\Omega,\mathcal{F},\mathbb{P};\brn)\right)$ the space of processes $X_\cdot(\cdot)$ such that
\begin{align*}
    \e\int_{t}^{T}\int_{\brn}|X_s(x)|^{2}dm(x)ds<+\infty;
\end{align*}
and denote by $L^2_{\mathcal{W}_{t}}\left(t,T;L_{m}^{2}(\Omega,\mathcal{F},\mathbb{P};\brn)\right)$ the subspace of $L^2\left(t,T;L_{m}^{2}(\Omega,\mathcal{F},\mathbb{P};\brn)\right)$ of all processes adapted to the filtration $\mathcal{W}_{t}$. 

We shall use the pushed forward probability $X(\cdot,\cdot)\#(m\otimes \mathbb{P})\in\pr_2(\brn)$; to simplify the notation, we write simply $X(\cdot,\cdot)\#(m\otimes P)=X(\cdot)\otimes m$,
omitting as usual $\omega$ and $\mathbb{P}$. From the definition, we know that for any test function $\varphi$ which is continuous,
\begin{equation*}
    \int_{\brn}\varphi(\xi)d\left(X(\cdot)\otimes m\right)(\xi)=\e \int_\brn \varphi(X(x))dm(x).
\end{equation*}
We can check easily the following differentiation formula
\begin{equation}\label{relation:derivative}
    \lim_{\epsilon\to 0}\dfrac{F\left(\left(X+\epsilon\widetilde{X}\right)\otimes m\right)(\cdot)-F\left(X(\cdot)\otimes m\right)}{\epsilon}= \e\int_{\brn}D\dfrac{dF}{d\nu}(m)(X(x))\cdot \widetilde{X}(x)dm(x).
\end{equation}
Equality \eqref{relation:derivative} gives the relation between the G\^ateaux derivative of the functional on Hilbert space and the linear functional derivative of functional defined on $\pr_2(\brn)$. For the proof and more detailed discussion, we refer to \cite{AB6,AB10,AB5}.

\subsection{Differential Calculus}

We consider the It\^o process 
\begin{equation}
    X^{m,t}_s(x)=x+\int_{t}^{s}\alpha^{m,t}_\tau(x)d\tau+\sum_{j=1}^{n}\sigma^{j}\left(w^{j}_s-w_{j}(t)\right), \label{eq:2-50}
\end{equation}
where $\sigma^{j}\in \brn$ and the control process $\alpha^{m,t}_\cdot(\cdot)\in L_{\mathcal{W}_{t}}^{2}\left(t,T;L_{m}^{2}(\Omega,\mathcal{F},P;\brn)\right)$; and we denote by 
\begin{equation}
m^{m,t}_s=X^{m,t}_s(\cdot)\otimes m. \label{eq:2-52}
\end{equation}
Here and in the rest of this article, we use the superscript $m,t$ to denote their dependence on the initial condition. We introduce the second order differential operator: for $\varphi\in C^2(\brn)$, 
\begin{equation}
    \mathcal{A}\varphi(x):=-\dfrac{1}{2}\sum_{j=1}^{n}\left(\sigma^{j}\right)^\top D^{2}\varphi(x)\sigma^{j}=-\dfrac{1}{2}\sum_{i,j=1}^{n}\dfrac{\partial^{2}\varphi}{\partial x_{i}\partial x_{j}}a_{ij},
\end{equation}
where $a=\sigma\sigma^\top$. Consider an arbitrary 
 function $f:\pr_2(\brn) \ni m\mapsto f(m)\in\br$ which is functionally differentiable with 
\begin{equation}\label{condition:derivative}
    \text{the derivatives $D_{x_{i}}\dfrac{df}{d\nu}(m)(x)$ and $D_{x_{i}x_{j}}^{2}\dfrac{df}{d\nu}(m)(x)$ being in $L_{m}^{2}(\brn)$.}
\end{equation}
Then, we have the following chain rule, whose proof can be found in \cite{AB5} for instance. 
\begin{equation}
    \dfrac{df}{ds}\left(m^{m,t}_s\right)=\e\int_{\brn}\left[D\dfrac{df}{d\nu}\left(m^{m,t}_s\right)\left(X^{m,t}_s(x)\right)\cdot\alpha^{m,t}_s(x)-\mathcal{A}\dfrac{df}{d\nu}\left(m^{m,t}_s\right)\left(X^{m,t}_s(x)\right)\right]dm(x). \label{eq:2-54}
\end{equation}
Then, for a function $F:[0,T]\times \pr_2(\brn)\to \br$ which is $C^1$ in $t$ and satisfies \eqref{condition:derivative}, we can write the following formula similar to It\^o's formula: 
\begin{align}
    \dfrac{d}{ds}F\left(s,m^{m,t}_s\right)=\dfrac{\partial F}{\partial s}\left(s,m^{m,t}_s\right)+\e\int_{\brn}\bigg[&D\dfrac{dF}{d\nu}\left(s, m^{m,t}_s \right)\left(X^{m,t}_s(x)\right) \cdot \alpha^{m,t}_s(x) \notag \\
    &-\mathcal{A}\dfrac{dF}{d\nu}\left(s,m^{m,t}_s\right)\left(X^{m,t}_s(x)\right) \bigg] dm(x). \label{eq:2-560}
\end{align}
In particular, if the drift term of the process is of the form $\alpha^{m,t}_s(x)=v(s,X_{xmt}(s))$, the formula \eqref{eq:2-560} is equivalent to 
\begin{equation}
    \dfrac{d}{ds}F\left(s,m^{m,t}_s\right)=\dfrac{\partial F}{\partial s}\left(s,m^{m,t}_s\right)+\int_{\brn}\bigg[D\dfrac{dF}{d\nu}\left(s,(m^{m,t}_s\right)(x)\cdot v(s,x)-\mathcal{A}\dfrac{dF}{d\nu}\left(s,m^{m,t}_s\right)(x)\bigg]dm^{m,t}_s(x). \label{eq:2-561}
\end{equation}
When letting $s\downarrow t$, we obtain the limit 
\begin{equation*}
    \dfrac{dF}{dt}\left(t,m^{m,t}_t\right)=\dfrac{\partial F}{\partial t}(t,m)+\int_{\brn}\left[D\dfrac{dF}{d\nu}(t,m)(x)\cdot v(t,x)-\mathcal{A}\dfrac{dF}{dm}(t,m)(x)\right]dm(x). 
\end{equation*}
In the sequel, we shall refer to \eqref{eq:2-560}  as It\^o's formula. 

\section{Mean Field Control Problem}\label{sec:MFTC}

\subsection{Setting of the Problem}

For any initial time $t\in[0,T]$, an admissible control is a field stochastic process 
\begin{align*}
    v^{m,t}_\cdot(\cdot) \in L_{\mathcal{W}_{t}}^{2}\left(t,T;L_{m}^{2}(\Omega,\mathcal{F},P;\brn)\right).
\end{align*}
These processes are open-loop, but depend on parameters $x,m,t$, which are the initial condition. This will suffice to recover the feedback as we shall see, without loss of generality. When there is no ambiguity, we sometimes omit the superscript $m,t$ in a control $v^{m,t}_\cdot(\cdot)$. The associated controlled state process $X^{v,m,t}_\cdot(\cdot)$ corresponding to a control $v^{m,t}$ satisfies the following stochastic differential equation (SDE): 
\begin{equation}\label{eq:2-2}
    X^{v,m,t}_s(x)=x+\int_t^s v^{m,t}_\tau (x)d\tau + \sum_{j=1}^{n}\sigma^{j} \left(w^{j}_s-w^{j}_\tau\right).
\end{equation}
We then introduce the coefficients for the running cost:
\begin{align*}
    &l:[0,T]\times\brn\times \brn \to \br,\quad F:\pr_2(\brn)\to\br;
\end{align*}
and the coefficients for the terminal cost:
\begin{align*}
    &h: \brn\to\br,\quad F_T: \pr_2(\brn)\to\br.
\end{align*}
The cost for the control $v^{m,t}$ is then defined as
\begin{align}
    J_{mt}\left(v^{m,t}\right):=\ & \e\int_{t}^{T}\left[\int_{\brn} l\left(s,X^{v,m,t}_s(x),v^{m,t}_s(x)\right)dm(x)+F\left(X^{v,m,t}_s\otimes m\right) \right]ds \notag \\
    & +\e \int_{\brn} h \left(X^{v,m,t}_T(x)\right) dm(x)+F_{T}\left(X^{v,m,t}_T\otimes m\right). \label{eq:2-6}
\end{align}
Our objective is to minimize the functional $J_{mt}$ over controls $v^{m,t} \in L_{\mathcal{W}_{t}}^{2}(t,T;L_{m}^{2}\left(\Omega,\mathcal{F},P;\brn)\right)$. We define the Hamiltonian function $H$, which is classical in stochastic control and is defined as
\begin{equation*}
    H(s,x,p):=\inf_{v\in\brn}\left\{l(s,x,v)+p\cdot v\right\}.
\end{equation*}

We now state the following assumptions required in this article on the coefficients for the MFTC problem \eqref{eq:2-2}-\eqref{eq:2-6}. Firstly, throughout this article, we need the following non-degenerate condition on the diffusion:

\begin{assumption}[on $\sigma$]\label{eq:5-8}
    The matrix $a:=\sigma\sigma^\top$ is invertible.
\end{assumption}

For other coefficients, all functions are measurable in all arguments, and this global assumption will not be repeated. Conversely, continuity and differentiability will be stated. A generic constant will be denoted by $c$ or $c_{T}$ (for the final conditions at $T)$. Growth assumptions on the arguments are essential. We make following conditions on the coefficients $h$, $F$, $F_T$ and $H$, which are stated in order of increasing strength; indeed, although the final well-posedness result requires the full assumptions, some intermediate results only require the weaker-level assumptions.

\begin{assumption}[on $h$] \label{assumption:h}
    (i) The function $h$ is continuous and satisfies the following growth condition
    \begin{equation*}
        |h(x)|\leq c_{T}(1+|x|^{2}). 
    \end{equation*}
    (ii) Moreover, the function $h$ is differentiable in $x\in\brn$, with the derivative being continuous and satisfying 
    \begin{equation*}
        |Dh(x)|\leq c_{T}(1+|x|). 
    \end{equation*}
    (iii) Moreover, the derivative $Dh$ is differentiable, with the derivative being continuous and satisfying 
    \begin{align*}
        \left|D^{2}h(x)\right|\le c_T.
    \end{align*}
\end{assumption}

\begin{assumption}[on $F$ and $F_T$] \label{assumption:F}
    (i) The functions $F$ and $F_T$ satisfy the following growth conditions
    \begin{align*}
        &|F(m)|\le c\left(1+\int_{\brn}|x|^{2}dm(x)\right),\quad \left|F_{T}(m)\right|\le c_{T}\left(1+\int_{\brn}|x|^{2}dm(x)\right),
    \end{align*}
    and they are functional differentiable, with the functional derivatives being continuous and satisfying 
    \begin{align*}
        &\left|\frac{dF}{d\nu}(m)(x)\right|\le c\left(1+|x|^{2}\right),\quad \left|\frac{dF_T}{d\nu}(m)(x)\right|\le c_{T}\left(1+|x|^{2}\right).
    \end{align*}
    (ii) Moreover, the mappings $x\mapsto \frac{dF}{d\nu}(m)(x)$ and $x\mapsto \frac{dF_T}{d\nu}(m)(x)$ are differentiable, with the derivatives being continuous and satisfying
    \begin{align*}
        \left|D\dfrac{dF}{d\nu}(m)(x)\right|\le c(1+|x|),\quad \left|D\dfrac{dF_T}{d\nu}(m)(x)\right|\le c_{T}(1+|x|).
    \end{align*}
    (iii) Moreover, the mapping $x\mapsto D\frac{dF_T}{d\nu}(m)(x)$ is differentiable, with the derivative being continuous and satisfying
    \begin{align*}
        \left|D^{2}\dfrac{dF_T}{d\nu}(m)(x)\right|\leq c_T. 
    \end{align*}
\end{assumption}

\begin{assumption}[on $H$] \label{assumption:H}
    (i) The Hamiltonian function $H$ satisfies the following growth condition
    \begin{equation*}
        |H(s,x,p)|\le c\left(1+|x|^2\right)+\frac{\delta}{2}|p|^{2}. 
    \end{equation*}
    (ii) Moreover, the function $H$ is differentiable in $(x,q)\in\brn\times\brn$, with the derivatives being continuous and satisfying 
    \begin{align*}
        &\left|D_x H(s,x,p)\right|,\ \left|D_p H(s,x,p)\right|\ \le \ c(1+|x|+|p|).
    \end{align*}
\end{assumption}

We also refer to \cite{AB5} for assumptions on the coefficients for the study of the well-posedness for the classical solution Bellman equation via a probabilistic approach. In this article, since our aim is to use the variational approach to give the well-posedness of the HJB equation (see \eqref{eq:3-6}) in a subset $B_T$ of $L^{2}(t,T;H_{\pi_{\gamma}}^{1}(\brn))$ (defined in Subsection~\ref{subsec:main}), the regularity assumptions on the coefficients are one-order lower than in \cite{AB5}. In general, the regularity assumptions in \cite{AB5} and in this article are the least stringent in the contemporary literature --- for the classical solvability via the probabilistic method, and for the $L^{2}(t,T;H_{\pi_{\gamma}}^{1}(\brn))$ solution via the analytical approach for the PDEs, respectively. 

In the rest of this section, we shall also introduce the forward-backward system of stochastic equations for the MFTC problem \eqref{eq:2-2}-\eqref{eq:2-6} arising from the maximum principle, and also the associated Bellman equation. 
Therefore, for this part, we shall also need the following assumption on $l$ and the minimizer $\widehat{v}$ for $H$. 

\begin{assumption}[on $l$] \label{assumption:l}
    The function $l$ has a quadratic growth in $(x,v)\in\brn\times\brn$, and is differentiable in $(x,v)$, with the derivatives $D_xl$ and $D_vl$ having a linear growth in $(x,v)$ and being continuous. Moreover, for any $(s,x,p)\in[t,T]\times\brn\times\brn$, the Hamiltonian function $H$ has a unique minimizer $\widehat{v}(s,x,p)$ such that 
    \begin{equation}\label{def:hat_v}
        D_v l\left(s,x,\widehat{v}(s,x,p)\right)+p=0.
    \end{equation}
\end{assumption}

Under Assumption \eqref{assumption:l}, we shall also have the following relations: for $(s,x,p)\in[t,T]\times\brn\times\brn$,
\begin{equation}\label{D_pH}
    \begin{aligned}
        D_p H(s,x,p)=\ & \widehat{v}(s,x,p), \\
        D_x H(s,x,p)=\ & D_x l\left(s,x,\widehat{v}(s,x,p)\right).
    \end{aligned}
\end{equation}
Indeed, the uniquely existence of $\widehat{v}(s,x,p)$ can be obtained under the regularity conditions in Assumption~\ref{assumption:l} and an additional convexity assumption on the function $l$ in $v$ (see Assumption~\ref{assumption:convex}), which will be assumed in Subsection~\ref{subsec:global} for the global solvability results.

\begin{remark}
    For the detailed study on the minimizer $\widehat{v}(\cdot)$, we refer to \cite[Proposition 5.4]{AB10''}. The existence of this unique minimizer in Assumption~\ref{assumption:l} is not required for the gocal-in-time solvability for the HJB equation in Section~\ref{sec:main}, but is required for the global-in-time solvability of the MFTC problem \eqref{eq:2-2}-\eqref{eq:2-6}. This kind of assumption is also required for the study for MFTC via a probabilistic approaches, see \cite{AB10,AB5} for instance; and we also refer to \cite{AB12} for a more detailed discussion on the monotonicity conditions in mean field theory. 
\end{remark}

\subsection{Pontryagin Maximum Principle and Bellman equation}

Under Assumptions~\ref{assumption:h}-(ii), \ref{assumption:F}-(ii) and \ref{assumption:l}, we define the adjoint state $P^{v,m,t}_s(x)$ for our MFTC problem \eqref{eq:2-2}-\eqref{eq:2-6} associated the control $v^{m,t}$ by the following formula:
\begin{align*}
    P^{v,m,t}_s(x)=\e \bigg[&\int_{s}^{T}\left(D_{x} l\left(\tau,X^{v,m,t}_\tau(x),v^{m,t}_\tau(x) \right)+D\frac{dF}{d\nu}\left(X^{v,m,t}_\tau\otimes m\right)\left(X^{v,m,t}_\tau(x)\right)\right)d\tau\\
    &+Dh\left(X^{v,m,t}_T(x)\right)+D\dfrac{dF_{T}}{d\nu}\left(X^{v,m,t}_T\otimes m\right)\left(X^{v,m,t}_T(x)\right)\ \bigg|\ \mathcal{W}_{t}^{s}\ \bigg],\quad s\in[t,T],
\end{align*}
where $X^{v,m,t}$ is the controlled state process corresponding to the control $v^{m,t}$ defined in \eqref{eq:2-2}. This adjoint process is also a solution of the backward SDE (BSDE):
\begin{align*}
    P^{v,m,t}_s(x)=\ & Dh\left(X^{v,m,t}_T(x)\right)+D\dfrac{dF_{T}}{d\nu}\left(X^{v,m,t}_T\otimes m\right)\left(X^{v,m,t}_T(x)\right) - \int_s^T Q^{v,m,t}_\tau (x) dw_\tau\\
    &+ \int_s^T \left[D_{x}l\left(\tau,X^{v,m,t}_\tau(x),v^{m,t}_\tau(x)\right)+D\dfrac{dF}{d\nu}\left(X^{v,m,t}_\tau\otimes m\right)\left(X^{v,m,t}_\tau(x)\right)\right]d\tau,\quad s\in[t,T].
\end{align*}
Under the above mentioned assumptions, for any $v^{m,t}$ (and the associated controlled state process $X^{v,m,t}$), the last BSDE has a unique solution 
\begin{align*}
    \left(P^{v,m,t}_\cdot(\cdot),Q^{v,m,t}_\cdot(\cdot)\right)\in L_{\mathcal{W}_{t}}^{2}\left(t,T;L_{m}^{2}(\Omega,\mathcal{F},P;\brn)\right)\times L_{\mathcal{W}_{t}}^{2}\left(t,T;L_{m}^{2}(\Omega,\mathcal{F},P;\br^{n\times n})\right);
\end{align*}
we refer to \cite{AB8} for more detials. We then have the following G\^ateaux differential property: for $v,\widetilde{v}\in L_{\mathcal{W}_{t}}^{2}\left(t,T;L_{m}^{2}(\Omega,\mathcal{F},P;\brn)\right)$,
\begin{equation}
    \frac{d}{d\theta}J_{mt}\left(v+\theta\widetilde{v}\right)\bigg|_{\theta=0}=\e\bigg[\int_{t}^{T}\int_{\brn}\left[P^{v,m,t}_s(x)+D_{v}l\left(s,X^{v,m,t}_s(x),v^{m,t}_s(x)\right)\right]\cdot \widetilde{v}_s(x)dm(x)\bigg],
\end{equation}
and we also refer to \cite{AB8} for the detailed computation. We now can state the Pontryagin maximum principle (PMP): let $u^{m,t}$ be the optimal control for MFTC problem \eqref{eq:2-2}-\eqref{eq:2-6}, and let $Y^{m,t}$ be the associated controlled state, $\left(P^{m,t},Q^{m,t}\right)$ be associated adjoint process, and we denote by 
\begin{equation}
    m^{m,t}_s:=Y^{m,t}_s\otimes m,\quad s\in[t,T], \label{eq:2-15}
\end{equation}
then, we have the following optimality condition:
\begin{equation}\label{maximum_principle}
    D_{v}l\left(s,Y^{m,t}_s,u^{m,t}_s(x)\right)+P^{m,t}_s(x)=0, s\in[t,T],\quad x\in\brn.
\end{equation}
This condition gives the following system of FBSDEs:
\begin{equation}\label{eq:2-18}
    \left\{
    \begin{aligned}
        &Y^{m,t}_s(x)=x+\int_t^s u^{m,t}_\tau (x)d\tau+\sum_{j=1}^{n}\sigma^{j}\left(w^j_s-w^j_t\right),\\
        &P^{m,t}_s(x)= Dh\left(Y^{m,t}_T(x)\right)+D\dfrac{dF_{T}}{d\nu}\left(m^{m,t}_T\right)\left(Y^{m,t}_T(x)\right) - \int_s^T Q^{m,t}_\tau (x) dw_\tau\\
        &\quad\qquad\qquad + \int_s^T \left[D_{x}l\left(\tau,Y^{m,t}_\tau(x),u^{m,t}_\tau(x)\right)+D\dfrac{dF}{d\nu}\left(m^{m,t}_\tau\right)\left(Y^{m,t}_\tau(x)\right)\right]d\tau,\\
        &D_{v}l\left(s,Y^{m,t}_s,u^{m,t}_s(x)\right)+P^{m,t}_s(x)=0,\quad s\in[t,T],\quad x\in\brn.
    \end{aligned}
    \right.
\end{equation}
From the definition of the map $\widehat{v}$ in Assumption~\ref{assumption:l}, we know that 
\begin{align}\label{eq:2-102}
    u^{m,t}_s(x)=\widehat{v}\left(s,Y^{m,t}_s(x),P^{m,t}_s(x)\right),\quad s\in[t,T],\quad x\in\brn,
\end{align}
and we can also write the system \eqref{eq:2-18} as follows 
\begin{equation}\label{eq:2-101}
    \left\{
    \begin{aligned}
        &Y^{m,t}_s(x)=x+\int_t^s u^{m,t}_\tau (x)d\tau+\sum_{j=1}^{n}\sigma^{j}\left(w^j_s-w^j_t\right),\\
        &P^{m,t}_s(x)= Dh\left(Y^{m,t}_T(x)\right)+D\dfrac{dF_{T}}{d\nu}\left(m^{m,t}_T\right)\left(Y^{m,t}_T(x)\right) - \int_s^T Q^{m,t}_\tau (x) dw_\tau\\
        &\quad\qquad\qquad + \int_s^T \left[D_x H\left(\tau,Y^{m,t}_\tau(x),P^{m,t}_\tau(x)\right)+D\dfrac{dF}{d\nu}\left(m^{m,t}_\tau\right)\left(Y^{m,t}_\tau(x)\right)\right]d\tau,\\
        &u^{m,t}_s(x)=\widehat{v}\left(s,Y^{m,t}_s(x),P^{m,t}_s(x)\right),\quad s\in[t,T],\quad x\in\brn.
    \end{aligned}
    \right.
\end{equation}
Conversely, as a sufficient condition for MFTC problem \eqref{eq:2-2}-\eqref{eq:2-6}, the solution of FBSDEs \eqref{eq:2-18} (or \eqref{eq:2-101}) can also give an optimal control for MFTC problem \eqref{eq:2-2}-\eqref{eq:2-6} under additional convexity conditions (to be stated in Subsection~\ref{subsec:global}); we refer to \cite{AB8} for the proof for the sufficiency maximum principle and that for the well-posedness of FBSDEs \eqref{eq:2-18}. 


The value function for MFTC problem \eqref{eq:2-2}-\eqref{eq:2-6} is defined by 
\begin{equation}
    \Phi(t,m)=\inf_{v^{m,t}_\cdot(\cdot) \in L_{\mathcal{W}_{t}}^{2}\left(t,T;L_{m}^{2}(\Omega,\mathcal{F},P;\brn)\right)}J_{mt}\left(v^{m,t}\right),\quad (t,m)\in[0,T]\times\pr_2(\brn). 
\end{equation}
Suppose that $u^{m,t}$ is the optimal control, then, we can write
\begin{align*}
    \Phi(t,m)=J_{mt}\left(u^{m,t}\right)=\e\bigg[&\int_t^T \left[\int_{\brn}l\left(s,Y^{m,t}_s(x),u^{m,t}_s(x)\right)dm(x)+F\left(m^{m,t}_s\right)\right] ds\\
    &+\int_{\brn}h\left(Y^{m,t}_T(x)\right)dm(x)+F_{T}\left(m^{m,t}_T\right)\bigg]. 
\end{align*}
The next result shows that when the value function is smooth enough, then it satisfies the Bellman equation for MFTC. For the detailed proof, we refer to our previous work \cite{AB8}, but for the convenience of the readers, we also provide the sketch of the proof here. 

\begin{theorem}\label{thm:Bellman}
Suppose that FBSDEs \eqref{eq:2-18} has an adapted solution $\left(Y^{m,t}_s(x),P^{m,t}_s(x),Q^{m,t}_s(x)\right)$. Then, the value function $\Phi$ is functionally differentiable in $m$ and satisfies
\begin{equation}\label{eq:2-26}
    D\dfrac{d\Phi}{d\nu}(t,m)(x)=P^{m,t}_t(x).
\end{equation}
Moreover, if $\Phi$ is continuous in $t$ and the derivative $D^2\frac{d\Phi}{d\nu}(t,m)(x)$ is continuous, then $\Phi$ is a solution of the following Bellman equation 
\begin{equation}\label{eq:2-21}
    \left\{
        \begin{aligned}
            &-\dfrac{\partial \Phi}{\partial s}(s,m)+\int_{\brn}\mathcal{A}\frac{d\Phi}{d\nu}(s,m)(x)dm(x)=\int_{\brn}H\left(s,x,D\dfrac{d\Phi}{d\nu}(s,m)(x)\right)dm(x)+F(m),\\
            &\Phi(T,m)=\int_{\brn}h(x)dm(x)+F_{T}(m).
        \end{aligned}
    \right.
\end{equation} 
\end{theorem}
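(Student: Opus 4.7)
The plan is to first derive the identity \eqref{eq:2-26} by a variational/envelope argument that exploits the optimality condition \eqref{maximum_principle}, and then combine it with dynamic programming and It\^o's formula \eqref{eq:2-560} to obtain the PDE. Fix $(t,m)$ and a test field $\widetilde{X}(\cdot)\in L_m^2(\Omega,\mathcal{F},\mathbb{P};\brn)$, and consider the perturbation $m_\epsilon := (\mathrm{Id}+\epsilon\widetilde{X})\otimes m$. Using the two optimal controls $u^{m,t}$ and $u^{m_\epsilon,t}$ as test controls in each other's problem (after relabelling to the correct initial measure), one gets a two-sided sandwich for $\Phi(t,m_\epsilon)-\Phi(t,m)$. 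In both inequalities, the first-order variation generated by the change of control vanishes by the G\^ateaux identity for $J_{mt}$ together with the maximum principle \eqref{maximum_principle}, so that the $O(\epsilon)$ contribution only comes from the explicit dependence of $J_{mt}$ on $m$: namely the initial datum $x\mapsto x+\epsilon\widetilde{X}(x)$ in \eqref{eq:2-2} and the integrands against $dm$ in \eqref{eq:2-6}. Differentiating these directly produces
\begin{equation*}
    \mathbb{E}\int_{\brn}\!\left[Dh(Y^{m,t}_T(x))+D\tfrac{dF_T}{d\nu}(m^{m,t}_T)(Y^{m,t}_T(x))\right]\!\cdot\widetilde{X}(x)\,dm(x) + \mathbb{E}\!\int_t^T\!\!\int_{\brn}\!\left[D_x l+D\tfrac{dF}{d\nu}\right]\!\cdot\widetilde{X}(x)\,dm(x)\,ds,
\end{equation*}
which by the backward equation in \eqref{eq:2-18} collapses to $\mathbb{E}\int_{\brn} P^{m,t}_t(x)\cdot\widetilde{X}(x)\,dm(x)$. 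Comparison with \eqref{relation:derivative} then identifies $D\frac{d\Phi}{d\nu}(t,m)(x)=P^{m,t}_t(x)$.

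For the Bellman equation, I would invoke DPP: for $\delta>0$ and any admissible $v^{m,t}$,
\begin{equation*}
    \Phi(t,m)\le \mathbb{E}\int_t^{t+\delta}\!\left[\int_{\brn} l(s,X^{v,m,t}_s(x),v_s(x))\,dm(x) + F(m^{v,m,t}_s)\right]ds + \Phi(t+\delta,m^{v,m,t}_{t+\delta}),
\end{equation*}
with equality along the optimal control $u^{m,t}$. Applying It\^o's formula \eqref{eq:2-560} to $\Phi(s,m^{v,m,t}_s)$ on $[t,t+\delta]$, dividing by $\delta$ and letting $\delta\downarrow 0$, and using the continuity hypotheses on $\partial_t\Phi$ and $D^2\tfrac{d\Phi}{d\nu}$, we arrive at
\begin{equation*}
    0\le \frac{\partial\Phi}{\partial t}(t,m) + \int_{\brn}\!\left[l(t,x,v_t(x)) + D\tfrac{d\Phi}{d\nu}(t,m)(x)\cdot v_t(x) - \mathcal{A}\tfrac{d\Phi}{d\nu}(t,m)(x)\right]dm(x) + F(m),
\end{equation*}
with equality along $u^{m,t}$. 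Pointwise minimisation in $v_t(x)\in\brn$ converts the $v$-dependent part of the bracket into $H(t,x,D\tfrac{d\Phi}{d\nu}(t,m)(x))$, yielding \eqref{eq:2-21}. The terminal condition $\Phi(T,m)=\int h\,dm+F_T(m)$ is immediate from the definition of $J_{mt}$.

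The main obstacle is the envelope step in part (i): to kill the first-order contribution coming from the change of optimal control we cannot in general differentiate $m\mapsto u^{m,t}$ without extra regularity. The two-sided sandwich with relabelled controls circumvents this entirely, relying only on the already-proved G\^ateaux identity for $J_{mt}$ and the optimality condition \eqref{maximum_principle}, so no differentiability of the control map in $m$ is needed. Once \eqref{eq:2-26} is secured, part (ii) is a direct DPP-plus-It\^o calculation on the Wasserstein space, legitimised by the smoothness of $\Phi$ stipulated in the theorem.
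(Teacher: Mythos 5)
Your proposal follows essentially the same route as the paper: the identity \eqref{eq:2-26} is obtained by perturbing the initial measure, testing with the optimal control, and collapsing the resulting directional derivative of $J_{mt}$ via the backward equation for $P^{m,t}_\cdot(\cdot)$; the Bellman equation then follows from the dynamic programming principle, It\^o's formula \eqref{eq:2-560}, and the observation that $l+P\cdot u$ equals $H$ along the optimal control. The only real difference is in the sandwich for part (i): the paper uses just the one-sided inequality \eqref{eq:2-28} and recovers equality by replacing the test direction $X$ with $-X$, which spares it the stability of $m\mapsto u^{m,t}$ that the lower bound of your two-sided version implicitly requires.
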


\begin{proof}
We first check \eqref{eq:2-26}. Let $X$ be an arbitrary element of $L_{m}^{2}(\brn)$, then, we can compute that 
\begin{equation}
    \lim_{\epsilon\to 0} \dfrac{\Phi(t,(I(\cdot)+\epsilon X(\cdot))\#m)-\Phi(t,m)}{\epsilon}=\int_{\brn}D\frac{d\Phi}{d\nu}(t,m)(x)\cdot X(x) dm(x). \label{eq:2-27}
\end{equation}
On the other hand, from the definition of the value function $\Phi$, we have for any $\epsilon>0$,
\begin{equation}
    \dfrac{\Phi(t,(I+\epsilon X))\#m)-\Phi(t,m)}{\epsilon}\ \le\ \frac{J_{(I+\epsilon X)\#m,t}\left(u^{m,t}\right)-J_{mt}\left(u^{m,t}\right)}{\epsilon}. \label{eq:2-28}
\end{equation}
We denote by $X^{u,m,t}_s\left(x+\epsilon X(x)\right)$ the state process corresponding to the initial condition $x+\epsilon X(x)$ and the control $u^{m,t}$, and we note the fact that  
\begin{align*}
    \frac{X^{u,m,t}_s\left(x+\epsilon X(x)\right)-Y^{m,t}_s(x)}{\epsilon}=X(x),
\end{align*}
then, we obtain also 
\begin{align*}
    &\lim_{\epsilon\to 0} \frac{J_{(I+\epsilon X)\#m,t}\left(u^{m,t}\right)-J_{mt}\left(u^{m,t}\right)}{\epsilon}\\
    =\ & \e \left[\int_{t}^{T} \int_{\brn} \left[D_{x}l\left(s,Y^{m,t}_s(x),u^{m,t}_s(x)\right)+D\frac{dF}{d\nu}\left(m^{m,t}_s\right)\left(Y^{m,t}_s(x)\right)\right]\cdot X(x)dm(x)ds\right]\\
    &+\e\left[\int_{\brn}\left[Dh\left(Y^{m,t}_T\right)+D\frac{dF_T}{d\nu}\left(m^{m,t}_T\right)\left(Y^{m,t}_T(x)\right)\right]\cdot X_{x}dm(x)\right].
\end{align*}
From the BSDE for the adjoint process $P^{m,t}$, we have
\begin{equation}
    \lim_{\epsilon\to 0} \frac{J_{(I+\epsilon X)\#m,t}\left(u^{m,t}\right)-J_{mt}\left(u^{m,t}\right)}{\epsilon}=\int_{\brn}X(x)\cdot P^{m,t}_t(x)dm(x). \label{eq:2-29}
\end{equation}
Combining \eqref{eq:2-27}, \eqref{eq:2-28} and \eqref{eq:2-29}, we conclude that 
\begin{align*}
    \int_{\brn}D\dfrac{d\Phi}{d\nu}(t,m)(x)\cdot X(x)dm(x)\leq\int_{\brn}X(x)\cdot P^{m,t}_t(x)dm(x).
\end{align*}
From the arbitrariness of $X$, we obtain \eqref{eq:2-26}. We now prove \eqref{eq:2-21}. Note that
\begin{align}
    \Phi(t+\epsilon,m)-\Phi(t,m)=\ & \Phi\left(t+\epsilon,m^{m,t}_{t+\epsilon}\right)-\Phi(t,m) -\left[\Phi\left(t+\epsilon,m^{m,t}_{t+\epsilon}\right)-\Phi(t+\epsilon,m)\right]. \label{eq:2-23}
\end{align}
From the dynamic programming principle, we have 
\begin{equation}\label{eq:2-22}
    \Phi(t,m)=\e\left[\int_{t}^{t+\epsilon}\int_{\brn}l\left(s,Y^{m,t}_s(x),u^{m,t}_s(x)\right)dm(x)+F\left(m^{m,t}_s\right)ds\right]+\Phi\left(t+\epsilon, m^{m,t}_{t+\epsilon}\right). 
\end{equation}
From the chain rule \eqref{eq:2-54}, we have
\begin{align}
    &\lim_{\epsilon\to 0}\dfrac{1}{\epsilon}\left[\Phi\left(t+\epsilon,m^{m,t}_{t+\epsilon}\right)-\Phi(t+\epsilon,m)\right] \notag \\
    =\ & \int_{\brn}\left[D\dfrac{d}{d\nu}\Phi(t,m)(x)\cdot u^{m,t}_t(x)-\mathcal{A}\frac{d\Phi}{d\nu}(t,m)(x)\right]dm(x).\label{eq:2-24}
\end{align}
Then, from \eqref{eq:2-23}, \eqref{eq:2-24} and \eqref{eq:2-22}, we obtain
\begin{align}\label{eq:2-25}
    \frac{\partial \Phi}{\partial t}(t,m)=\ & \int_{\brn}\left[\mathcal{A}\frac{d\Phi}{d\nu}(t,m)(x)-D\frac{d\Phi}{d\nu}(t,m)(x)\cdot u^{m,t}_t(x)-l\left(t,x,u^{m,t}_t(x)\right)\right]dm(x)-F(m).
\end{align}
By substituting \eqref{eq:2-26} into \eqref{eq:2-25}, we have
\begin{align*}
    &\int_{\brn}\left[l\left(t,x,u^{m,t}_t(x)\right)+D\frac{d\Phi}{d\nu}(t,m)(x)\cdot u^{m,t}_t(x)\right]dm(x)\\
    =\ & \int_{\brn}\left[l\left(t,x,u^{m,t}_t(x)\right)+P^{m,t}_t(x)\cdot u^{m,t}_t(x)\right]dm(x)\\
    =\ & \int_{\brn}H\left(t,x,D\frac{d\Phi}{d\nu}(m,t)(x)\right)dm(x).
\end{align*}
Therefore, \eqref{eq:2-25} is equivalent to \eqref{eq:2-21}, with $s=t$. Since the final condition at $t=T$ is clear, the result \eqref{eq:2-21} is proven. 
\end{proof}

We also refer to \cite{Buckdahn-Peng} for a similar representation as \eqref{eq:2-26} by using the Malliavin calculus. The regularity conditions on $\Phi$ in Theorem~\ref{thm:Bellman} will be given by Proposition~\ref{prop:6-2}, Theorem~\ref{thm:D_zV} and Corollary~\ref{corollary}. And it can also be proven by studying the Jacobian flow for FBSDEs \eqref{eq:2-18} by a probabilistic approach, and we refer to \cite{AB10} for the details. From \eqref{eq:2-26}, we can also state that 
\begin{equation}\label{eq:2-30}
    u^{m,t}_t(x)=\widehat{v}\left(t,x,P^{m,t}_t(x)\right)= \widehat{v}\left(t,x,D\frac{d\Phi}{d\nu}(t,m)(x)\right).
\end{equation}
This formula defines a feedback function, where the the state argument is the pair $(x,m)$. 

\section{HJB Equation for Linear Functional Derivative of Value Function}\label{sec:HJB}

From \eqref{eq:2-21} we can see that the Bellman equation for the MFTC problem \eqref{eq:2-2}-\eqref{eq:2-6} is PDE defined on the distribution space $\pr_2(\brn)$. The space $\pr_2(\brn)$ is a infinite-dimensional space, which makes the Bellman equation \eqref{eq:2-21} not easy to solve by a traditional analytical approach (while our previous works \cite{AB8,AB10,AB5} give a probabilistic approach). 

This inspires us to introduce and work on the HJB equation for MFTC instead of the Bellman equation. In this section, we shall see that the HJB equation is a backward PDE for $(s,x)$ rather than $(s,m)$, which makes it more convenient to solve and to study the regularity. This inspires us to introduce and work on another backward equation associated with MFTC instead of the Bellman equation, which is a PDE defined on $(s,x)\in[t,T]\times\brn$ rather than $(s,m)\in[t,T]\times\pr_2(\brn)$. We call it HJB equation to make distinguish with Bellman equation, and it is more convenient to solve and to study the regularity, and also requires less regularity assumptions. Indeed, our article provides an alternative method and perspective in studying the mean field type control problems, in addition to the popular HJ-FP system, Bellman equation and the master equation approaches. 

In this section, We shall first deduce the HJB equation from the Bellman equation, and then recover all equations (including Bellman equation, master equation and adjoint process) from this HJB equation, to show the role of the HJB equation and also the motavition of this article. The equations in this section are deduced formally, and the rigorous proofs for the well-posedness are given in the next two sections. 

\subsection{Master Equation}

We define the function $U:[0,T]\times\brn\times\pr_2(\brn)\to\br$ as 
\begin{equation*}
    U(s,x,m)=\dfrac{d\Phi}{d\nu}(s,m)(x), \label{eq:3-1}
\end{equation*}
where $\Phi$ is the value function. Then, the Bellman equation \eqref{eq:2-21} also writes
\begin{equation}\label{eq:3-2}
    \left\{
        \begin{aligned}
            &-\dfrac{\partial \Phi}{\partial s}(s,m)+\int_{\brn}\mathcal{A}U(s,x,m)dm(x)=\int_{\brn}H\left(s,x,DU(s,x,m)\right)dm(x)+F(m),\\
            &\Phi(T,m)=\int_{\brn}h(x)dm(x)+F_{T}(m),
        \end{aligned}
    \right.
\end{equation} 
and we can write \eqref{eq:2-30} as 
\begin{equation*}
    u^{m,s}_s(x)=\widehat{v}\left(s,x,DU(s,x,m)\right). \label{eq:3-20}
\end{equation*}
We can obtain a differential equation for $U$ by differentiating \eqref{eq:3-2} with respect to $m$:
\begin{equation}\label{eq:3-3}
    \left\{
        \begin{aligned}
            &-\dfrac{\partial U}{\partial s}(s,x,m)+\mathcal{A}U(s,x,m)+\int_{\brn}\mathcal{A}_\xi \frac{dU}{d\nu}(s,\xi,m)(x)dm(\xi)\\
            &=H\left(s,x,DU(s,x,m)\right)+\int_{\brn}D_p H\left(s,\xi,DU(s,\xi,m)\right)\cdot D_\xi\frac{dU}{d\nu}(s,\xi,m)(x) dm(\xi)+
            \frac{dF}{d\nu}(m)(x),\\
            &U(T,x,m)=h(x)+\frac{dF_T}{d\nu}(m)(x).
        \end{aligned}
    \right.
\end{equation} 
The functional derivative $U(s,x,m)$ is defined up to a function independent of $x$, 
and it is then fixed by the final condition at $T$. Equation \eqref{eq:3-3} is called the master equation for the MFTC, which was proposed in \cite{AB_JMPA}; for the results on the existence of the classical solution for \eqref{eq:3-3}, we refer to \cite{AB10,AB5} for a probabilistic approach. In this article, we shall also give the well-posedness results for \eqref{eq:3-3} in Proposition~\ref{prop:6-1}, Theorem~\ref{thm:D_zV} and Corollary~\ref{corollary}.

\subsection{The Hamilton Jacobi Bellman Equation}

We now define the function $V^{m,t}:[t,T]\times\brn\to\br$ as 
\begin{equation}
    V^{m,t}(s,x)=U\left(s,x,m^{m,t}_s\right), \label{eq:3-4}
\end{equation}
and use superscript $m,t$ for $V^{m,t}$ to denote its dependence on the initial condition for $m^{m,t}_s$. The important result is a consequence of It\^o's formula \eqref{eq:2-561}:
\begin{align}
    \dfrac{\partial V^{m,t}}{\partial s}(s,x)= \dfrac{\partial U}{\partial s}\left(s,x,m^{m,t}_s\right)+\int_{\brn}\bigg[& D_{p}H\left(s,\xi,DU\left(s,\xi,m^{m,t}_s\right)\right)\cdot D_{\xi}\frac{dU}{d\nu}\left(s,\xi,m^{m,t}_s\right)(x) \notag \\
    &-\mathcal{A}_{\xi}\dfrac{dU}{d\nu}\left(s,\xi,m^{m,t}_s \right)(x)\bigg]dm^{m,t}_s(\xi). \label{eq:3-5}
\end{align}
From the master equation \eqref{eq:3-3}, we can deduce that 
\begin{align*}
    &-\frac{\partial U}{\partial s}\left(s,x,m^{m,t}_s\right)+\mathcal{A}U\left(s,x,m^{m,t}_s \right)+\int_{\brn} \mathcal{A}_{\xi}\frac{dU}{d\nu}\left(s,\xi,m^{m,t}_s\right)(x)dm^{m,t}_s(\xi)\\
    =\ & H\left(s,x,DU\left(s,x,m^{m,t}_s\right)\right)+\frac{dF}{d\nu}\left(m^{m,t}_s\right)(x)\\
    & +\int_{\brn} D_{p} H\left(s,\xi,DU\left(s,\xi,m^{m,t}_s \right)\right) \cdot D_{\xi}\dfrac{dU}{d\nu}\left(s,\xi,m^{m,t}_s\right)(x)dm^{m,t}_s(\xi).
\end{align*}
Combining the last equation with \eqref{eq:3-5}, we obtain the following HJB equation for $V^{m,t}$:
\begin{equation}\label{eq:3-6}
    \left\{
    \begin{aligned}
        &-\frac{\partial V^{m,t}}{\partial s}(s,x)+\mathcal{A}V^{m,t}(s,x)=H\left(s,x,DV^{m,t}(s,x)\right)+\frac{dF}{d\nu}\left(m^{m,t}_s\right)(x),\quad s\in[t,T),\\
        &V^{m,t}(T,x)=h(x)+\dfrac{dF_T}{d\nu}\left(m^{m,t}_T\right)(x),\quad x\in\brn.
    \end{aligned}
    \right.
\end{equation}
The main results of this article is to give the well-posedness for the HJB equation \eqref{eq:3-6}, which will be given in the Section~\ref{sec:main}; see Theorem~\ref{theo5-2} for the local-in-time well-posedness and Theorem~\ref{theo5-3} for the global-in-time result.

\subsection{The Measure $m^{m,t}_s$}

The probability measure $m^{m,t}_s$ was introduced in \eqref{eq:2-15}. From the flow property, \eqref{eq:2-26} and \eqref{eq:3-1}, we know that
\begin{align}
    P^{m,t}_s(x)=\ & P^{m^{m,t}_s,s}_s\left(Y^{m,t}_s(x)\right)=D\dfrac{d\Phi}{d\nu}\left(s,m^{m,t}_s\right)\left(Y^{m,t}_s(x)\right) \notag \\
    =\ & DU\left(s,Y^{m,t}_s(x),m^{m,t}_s\right)=DV^{m,t}\left(s,Y^{m,t}_s(x)\right), \label{P_DV}
\end{align}
then, from \eqref{eq:2-102}, we obtain 
\begin{equation*}
    u^{m,t}_s(x)=\widehat{v}\left(s,Y^{m,t}_s(x),DV^{m,t}\left(s,Y^{m,t}_s(x)\right) \right).
\end{equation*}
The process $Y^{m,t}_s(x)$ is then solution of the following SDE:
\begin{equation}\label{eq:4-7}
    Y^{m,t}_s(x)=x+\int_t^s \widehat{v}\left(\tau,Y^{m,t}_s(\tau),DV^{m,t}\left(\tau, Y^{m,t}_\tau(x)\right)\right)d\tau + \sum_{j=1}^{n}\sigma^{j}\left(w^{j}_s-w^{j}_t \right),\quad s\in[t,T].
\end{equation}
From \eqref{eq:4-7} and \eqref{eq:2-15}, we see that the measure $m^{m,t}_s$ is uniquely characterized by
the function $DV^{m,t}$. We have the following result. 

\begin{lemma}\label{lem4-1}
Suppose that $\varphi$ is a continuous function on $\brn$ satisfying $|\varphi(z)|\le C\left(1+|z|^{2}\right)$, then, we have the formula:
\begin{equation}\label{eq:4-9}
    \int_{\brn}\varphi(z)dm^{m,t}_s(z)=\int_{\brn}\Psi^{m,t}(t,z)dm(z),
\end{equation}
where $\Psi^{m,t}(\tau,z)$ is the solution of the following backward equation defined on $(\tau,z)\in[t,s]\times\brn$:
\begin{equation}\label{eq:4-10}
\left\{
    \begin{aligned}
        &-\dfrac{\partial\Psi^{m,t}}{\partial\tau}(\tau,z)+\mathcal{A}\Psi^{m,t}(\tau,z)=D\Psi^{m,t}(\tau,z)\cdot D_p H\left(\tau,z,DV^{m,t}(\tau,z)\right),\quad \tau\in[t,s),\\
        &\Psi^{m,t}(s,z)=\varphi(z),\quad z\in\brn.
    \end{aligned}
\right.
\end{equation}
\end{lemma}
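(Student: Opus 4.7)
The plan is to prove the identity by a standard Feynman–Kac argument, applied pathwise along the process $Y^{m,t}_{\cdot}(x)$ that generates the measure flow. First I would recall from \eqref{eq:2-15} and \eqref{eq:4-7} that, using $\widehat{v}=D_pH$ (equation \eqref{D_pH}), the process $Y^{m,t}_{\cdot}(x)$ is a diffusion on $[t,T]$ with drift $b^{m,t}(\tau,z):=D_pH\bigl(\tau,z,DV^{m,t}(\tau,z)\bigr)$ and constant diffusion matrix $\sigma$, and that by the definition of the pushforward,
\begin{equation*}
    \int_{\brn}\varphi(z)\,dm^{m,t}_s(z)=\e\int_{\brn}\varphi\bigl(Y^{m,t}_s(x)\bigr)\,dm(x).
\end{equation*}
The PDE \eqref{eq:4-10} is precisely the backward Kolmogorov equation associated with this diffusion (note that $\mathcal{A}$ carries a minus sign in its definition, so $\frac{1}{2}\sum_j(\sigma^j)^\top D^2\Psi^{m,t}\sigma^j=-\mathcal{A}\Psi^{m,t}$), so the proof reduces to establishing the Feynman–Kac representation $\Psi^{m,t}(t,x)=\e[\varphi(Y^{m,t}_s(x))]$ and then integrating against $m$.

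The main step is an application of Itô's formula to $\tau\mapsto \Psi^{m,t}\bigl(\tau,Y^{m,t}_\tau(x)\bigr)$ on $[t,s]$. Expanding,
\begin{equation*}
    d\Psi^{m,t}\bigl(\tau,Y^{m,t}_\tau\bigr)=\Bigl[\tfrac{\partial\Psi^{m,t}}{\partial\tau}+D\Psi^{m,t}\cdot b^{m,t}-\mathcal{A}\Psi^{m,t}\Bigr]\bigl(\tau,Y^{m,t}_\tau\bigr)d\tau+D\Psi^{m,t}\bigl(\tau,Y^{m,t}_\tau\bigr)\cdot\sum_{j=1}^{n}\sigma^{j}dw^{j}_\tau,
\end{equation*}
and the bracketed drift vanishes identically by \eqref{eq:4-10}. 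Integrating from $t$ to $s$, using the terminal condition $\Psi^{m,t}(s,\cdot)=\varphi$, and taking expectations to kill the stochastic integral, I would obtain $\Psi^{m,t}(t,x)=\e\bigl[\varphi(Y^{m,t}_s(x))\bigr]$. Integrating both sides against $dm(x)$ and applying Fubini's theorem then yields \eqref{eq:4-9}.

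The main obstacle is justification rather than calculation: I need to know that \eqref{eq:4-10}, viewed as a linear backward parabolic equation with bounded (or at least controlled) drift $b^{m,t}$ and quadratically growing terminal datum, admits a solution $\Psi^{m,t}$ smooth enough in $(\tau,z)$ to support Itô's formula (say $C^{1,2}$ in the interior, with continuous extension up to $s$), and that $\Psi^{m,t}$ together with $D\Psi^{m,t}$ enjoy polynomial growth estimates uniform on $[t,s]$, so that $\e\int_t^s|D\Psi^{m,t}(\tau,Y^{m,t}_\tau(x))|^2d\tau<\infty$ and the stochastic integral is a genuine martingale. Non-degeneracy of $a=\sigma\sigma^\top$ (Assumption~\ref{eq:5-8}) together with the growth of $D_pH$ in Assumption~\ref{assumption:H}-(ii) gives the standard parabolic regularity required, while moment bounds on $Y^{m,t}_\tau(x)$ (linear drift growth in $x,p$, and the fact that $DV^{m,t}$ will be controlled in the norms developed in Section~\ref{sec:main}) will produce the uniform integrability needed to pass to expectations. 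A secondary, purely notational point is to extend from $\varphi\in C_b$, for which all estimates are trivial, to continuous $\varphi$ with quadratic growth by a standard truncation-and-limit argument using uniform control of $\e|Y^{m,t}_s(x)|^{2}$.
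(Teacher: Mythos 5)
Your proposal is correct and follows essentially the same route as the paper: both rest on the identity $\int_{\brn}\varphi\,dm^{m,t}_s=\e\int_{\brn}\varphi(Y^{m,t}_s(x))\,dm(x)$ from the pushforward definition, followed by Itô's formula applied to $\tau\mapsto\Psi^{m,t}(\tau,Y^{m,t}_\tau(x))$, with the drift cancelling by \eqref{eq:4-10} and \eqref{D_pH} to give $\Psi^{m,t}(t,x)=\e[\varphi(Y^{m,t}_s(x))]$. The additional regularity and integrability justifications you flag are exactly the points the paper itself defers to Section~\ref{sec:main}, where the well-posedness of $m^{m,t}_s$ is treated rigorously.
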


\begin{proof}
Since $m^{m,t}_s=Y^{m,t}_s\otimes m$, we know that
\begin{align*}
    \int_{\brn}\varphi(z)dm^{m,t}_s(z)=\e\int_{\brn} \varphi\left(Y^{m,t}_s(z)\right)dm(z)
\end{align*}
Considering the function $\Psi^{m,t}(\tau,z)$ the solution of \eqref{eq:4-10} and the process $Y^{m,t}_\tau(z)$ the solution of SDE \eqref{eq:4-7} with initial $z$, then, from relation \eqref{D_pH} and the It\^o's formula, we have 
\begin{align*}
    \e\left[\varphi\left(Y^{m,t}_s(z)\right)\right]=\Psi^{m,t}(t,z),
\end{align*}
from which we obtain \eqref{eq:4-9}.
\end{proof}

The rigorous proof for the well-posedness of the measure $m^{m,t}_s$ will be given after the well-posedness of $V^{m,t}$ in Section~\ref{sec:main}; see Subsection~\ref{subsec:m} for details. 

\begin{remark}[Important Observation]
    The HJB equation \eqref{eq:3-6} can be considered as a single equation for the function $V^{m,t}(s,x)$ indexed by parameters $m$ and $t$. This is because the probability measure $m^{m,t}_s$ is explicitly defined as a functional of $DV^{m,t}$. Indeed, from \eqref{eq:4-9} and \eqref{eq:4-10}, the measure $m^{m,t}_s$ is a non-local functional of $DV^{m,t}$, so \eqref{eq:3-6} is an HJB equation with a non-local Hamiltonian. In this way, we have a very natural extension of the HJB equation of stochastic control. We can call it the HJB equation of mean field type control. It will be the central equation of mean field type control problems, from which we can recover all the others, as we shall see in the following subsection. 
\end{remark}

\subsection{Recovering All Equations from the HJB Equation} \label{subsec:RECOVERING-ALL-EQUATIONS}

In the preceding developments, the logic of treatment of the MFTC problem \eqref{eq:2-2}-\eqref{eq:2-6} was first to write a Pontryagin Maximum Principle, then Bellman equation, then the master equation and finally the HJB equation. In the literature, this last equation is combined with a Fokker-Planck equation. We differ in considering only the HJB equation in which appears the measure $m^{m,t}_s$ characterized by the equations \eqref{eq:4-9} and \eqref{eq:4-10}. Let us check now that we can go backward from the HJB equation and recover all other equations and the PMP. We shall then give a rigorous approach of the HJB equation in Section~\ref{sec:main}, extending the methodology of standard stochastic control, that we call ``Variational approach", which permits weaker assumptions than the standard full regularity theory developed in the literature. 

From the HJB equation, we define both the function $V^{m,t}(s,x)$ and the measure $m^{m,t}_s$. Then, we can define the process $Y^{m,t}_s(x)$ through SDE \eqref{eq:4-7} and obtain the relation \eqref{eq:2-15}. 
We next obtain the master equation for $U(t,x,m)$. Recalling that $V^{m,t}(s,x)=U\left(s,x,m^{m,t}_s\right)$, we compute the derivative $\dfrac{\partial V^{m,t}}{\partial s}(s,x)$ by using the differentiation rule \eqref{eq:2-54} and introducing the functional derivative $\dfrac{dU}{d\nu}(s,x,m)(\xi)$ to obtain 
\begin{align*}
    \dfrac{\partial V^{m,t}}{\partial s}(s,x)=\dfrac{\partial U}{\partial s}\left(s,x,m^{m,t}_s\right)+\int_{\brn}\bigg[&D_{\xi}\frac{dU}{d\nu}\left(s,x,m^{m,t}_s\right)(\xi)\cdot D_p H\left(s,\xi,DV^{m,t}(s,\xi)\right)\\
    &-\mathcal{A}_{\xi}\dfrac{dU}{d\nu}\left(s,x,m^{m,t}_s\right)(\xi)\bigg]dm^{m,t}_s(\xi);
\end{align*}
and from the HJB equation \eqref{eq:3-6}, we then know that
\begin{align*}
    \dfrac{\partial V^{m,t}}{\partial s}(s,x)=\mathcal{A}U\left(s,x,m^{m,t}_s\right)-H\left(s,x,DU\left(s,x,m^{m,t}_s\right)\right)-\dfrac{dF}{d\nu}\left(m^{m,t}_s\right)(x).
\end{align*}
Writing this relation for $t=s$, we obtain
\begin{equation}\label{eq:4-12}
    \left\{
        \begin{aligned}
            &-\dfrac{\partial U}{\partial s}(s,x,m)+\mathcal{A}U(s,x,m)+\int_{\brn}\mathcal{A}_\xi \frac{dU}{d\nu}(s,x,m)(\xi)dm(\xi)\\
            &=H\left(s,x,DU(s,x,m)\right)+\int_{\brn}D_p H\left(s,\xi,DU(s,\xi,m)\right)\cdot D_\xi\frac{dU}{d\nu}(s,x,m)(\xi) dm(\xi)+
            \frac{dF}{d\nu}(m)(x),\\
            &U(T,x,m)=h(x)+\frac{dF_T}{d\nu}(m)(x).
        \end{aligned}
    \right.
\end{equation} 
By letting $U(s,x,m)=\dfrac{d\Phi}{d\nu}(s,m)(x)$ and plugging
into \eqref{eq:4-12}, we obtain the functional derivative of the Bellman equation \eqref{eq:2-21}, and we can note that 
\begin{align*}
    \dfrac{dU}{d\nu}(s,x,m)(\xi)=\dfrac{d^{2}\Phi}{d\nu^{2}}(s,m)(x,\xi)=\dfrac{d^{2}\Phi}{d\nu^{2}}(s,m)(\xi,x)=\dfrac{dU}{d\nu}(s,\xi,m)(x).
\end{align*}
Therefore, we see that \eqref{eq:4-12} is equivalent to \eqref{eq:3-3} and coincides with the master equation \eqref{eq:3-3}. See Proposition~\ref{prop:6-1} for a rigorous proof on recovering the master equation, and Proposition~\ref{prop:6-2} for that on recovering the Bellman equation; and also see Theorem~\ref{thm:D_zV} for the regularity of $U$.

To recover the equation for $P^{m,t}_s(x)$ in \eqref{eq:2-101}, we note from \eqref{P_DV} that
\begin{align}
    P^{m,t}_s(x)=DV^{m,t}\left(s,Y^{m,t}_s(x)\right). \label{P_DV'}
\end{align}
We first differentiate the HJB equation \eqref{eq:3-6} with respect to $x_i$ (where $x=(x_1,\dots,x_n)$) to obtain
\begin{align*}
    &-\dfrac{\partial}{\partial s}\dfrac{\partial V^{m,t}}{\partial x_{i}}(s,x)+\mathcal{A}\dfrac{\partial V^{m,t}}{\partial x_{i}}(s,x)\\
    =\ & \dfrac{\partial H}{\partial x_{i}}\left(s,x,DV^{m,t}(s,x)\right)+D_p H\left(s,x,DV^{m,t}(s,x)\right) \cdot D\dfrac{\partial V^{m,t}}{\partial x_{i}}(s,x)+\frac{\partial}{\partial x_{i}}\dfrac{d F}{d\nu}\left(m^{m,t}_s\right)(x);
\end{align*}
any by taking $x=Y^{m,t}_s(x)$, we get 
\begin{align}
    &-\dfrac{\partial}{\partial s}\dfrac{\partial V^{m,t}}{\partial x_{i}}\left(s,Y^{m,t}_s(x)\right)+\mathcal{A}\dfrac{\partial V^{m,t}}{\partial x_{i}}\left(s,Y^{m,t}_s(x)\right) \notag \\
    =\ & \dfrac{\partial H}{\partial x_{i}}\left(s,Y^{m,t}_s(x),P^{m,t}_s(x)\right)+u^{m,t}_s(x)\cdot D\dfrac{\partial V^{m,t}}{\partial x_{i}}\left(s,Y^{m,t}_s(x)\right) +\dfrac{\partial}{\partial x_{i}}\dfrac{dF}{d\nu}\left(m^{m,t}_s\right)\left(Y^{m,t}_s(x)\right). \label{eq:4-19}
\end{align}
From \eqref{P_DV'}, by writing $P^{m,t,i}_s(x)=\frac{\partial V^{m,t}}{\partial x_{i}}\left(s,Y^{m,t}_s(x)\right)$ and using It\^o's formula, we get 
\begin{align*}
    -dP^{m,t,i}_s(x)=\ & -\left[\dfrac{\partial}{\partial s}\dfrac{\partial V^{m,t}}{\partial x_{i}}\left(s,Y^{m,t}_s(x)\right)-\mathcal{A}\dfrac{\partial V^{m,t}}{\partial x_{i}}\left(s,Y^{m,t}_s(x)\right)+D\dfrac{\partial V^{m,t}}{\partial x_{i}}\left(s,Y^{m,t}_s(x)\right)\cdot u^{m,t}_s(x) \right]ds\\
    &-\sum_{j=1}^n D\frac{\partial V^{m,t}}{\partial x_{i}}\left(s,Y^{m,t}_s(x)\right)\cdot \sigma^{j}dw^{j}_s,
\end{align*}
and by using \eqref{eq:4-19}, it follows that
\begin{align}
    -dP^{m,t,i}_s(x)=\ & \left[\dfrac{\partial H}{\partial x_{i}}\left(s,Y^{m,t}_s(x),P^{m,t}_s(x)\right) +\dfrac{\partial}{\partial x_{i}}\dfrac{dF}{d\nu}\left(m^{m,t}_s\right)\left(Y^{m,t}_s(x)\right)\right]ds \notag \\
    &-\sum_{j=1}^n D\frac{\partial V^{m,t}}{\partial x_{i}}\left(s,Y^{m,t}_s(x)\right)\cdot \sigma^{j}dw^{j}_s. \label{BSDE:P'}
\end{align}
Therefore, we can write 
\begin{align*}
    P^{m,t}_s(x)=\ & Dh\left(Y^{m,t}_T(x)\right)+D\dfrac{dF_{T}}{d\nu}\left(m^{m,t}_T\right)\left(Y^{m,t}_T(x)\right) - \sum_{j=1}^n \int_s^T D^2 V^{m,t}\left(\tau,Y^{m,t}_\tau(x)\right)\sigma^j dw^j_\tau\\
    &+ \int_s^T \left[D_x H\left(\tau,Y^{m,t}_\tau(x),P^{m,t}_\tau(x)\right)+D\dfrac{dF}{d\nu}\left(m^{m,t}_\tau\right)\left(Y^{m,t}_\tau(x)\right)\right]d\tau,
\end{align*}
and we recover the backward equation in \eqref{eq:2-101}), and also with the additional information 
\begin{equation*}
    Q^{m,t,j}_s(x)=D^2 V^{m,t}\left(s,Y^{m,t}_s(x)\right)\sigma^{j},\quad j=1,2,\dots,n.
\end{equation*}

\section{Study of the HJB Equation}\label{sec:main}


Although the developments above are very formal, they show the key role of the HJB equation, in a way which is very similar to that of standard stochastic control. The objective now is to give the well-posedness in a rigorous way the following set of equations: given parameters $t$ and $m$, find a solution $V^{m,t}(s,x)$ of the following HJB equation:
\begin{equation}\label{eq:5-150}
    \left\{
    \begin{aligned}
        &-\frac{\partial V^{m,t}}{\partial s}(s,x)+\mathcal{A}V^{m,t}(s,x)=H\left(s,x,DV^{m,t}(s,x)\right)+\frac{dF}{d\nu}\left(m^{m,t}_s\right)(x),\quad s\in[t,T),\\
        &V^{m,t}(T,x)=h(x)+\dfrac{dF_T}{d\nu}\left(m^{m,t}_T\right)(x),\quad x\in\brn,
    \end{aligned}
    \right.
\end{equation}
in which the measure $m^{m,t}_s$ is obtained from $DV^{m,t}(s,x)$
by the relation 
\begin{equation}
    \int_{\brn}\varphi(z)dm^{m,t}_s(z)=\int_{\brn}\Psi^{m,t}(t,z)dm(z), \label{eq:5-151}
\end{equation}
where $\Psi^{m,t}(\tau,z)$ is the solution of the equation 
\begin{equation}\label{eq:5-152}
\left\{
    \begin{aligned}
        &-\dfrac{\partial\Psi^{m,t}}{\partial\tau}(\tau,z)+\mathcal{A}\Psi^{m,t}(\tau,z)=D\Psi^{m,t}(\tau,z)\cdot D_p H\left(\tau,z,DV^{m,t}(\tau,z)\right),\quad \tau\in[t,s),\\
        &\Psi^{m,t}(s,z)=\varphi(z),\quad z\in\brn,
    \end{aligned}
\right.
\end{equation}
for any test function $\varphi$ which is continuous and bounded. We shall need of course to make precise the functional space for $V^{m,t}(s,x)$, 
which will be given in Subsection~\ref{subsec:main}. 

In this section, to simplify notation in the sequel, we shall write $V^{m,t}(s,x)=V(s,x)$ and $m^{m,t}_s=m_s$; and we shall use Assumptions~\ref{eq:5-8}, \ref{assumption:h}, \ref{assumption:F} and \ref{assumption:H}. We shall first give \textit{a priori} estimate on $V$ and its deratives in Subsection~\ref{subsec:priori:V}-\ref{subsec:V_x_i}, and give the solvability of HJB equation \eqref{eq:5-150} for $T-t$ sufficiently small in Subection~\ref{subsec:main}, then, we shall also give the global-in-time solvability results under additional convexity assumptions on the cost functions in Subsection~\ref{subsec:global}.

\subsection{\textit{A Priori} Estimate on the boundedness of $|V(s,x)|$}\label{subsec:priori:V}

In this part, we give \textit{a priori} estimate on the growth condition for the $V(s,x)$.  From Equation \eqref{eq:5-150} and Assumptions~\ref{assumption:H}-(i), \ref{assumption:h}-(i) and \ref{assumption:F}-(i), we have 
\begin{equation}\label{eq:5-10}
    \begin{aligned}
        \left|-\dfrac{\partial V}{\partial s}(s,x)+\mathcal{A}V(s,x) \right| \le\ & \dfrac{\delta}{2}|DV(s,x)|^{2}+2c\left(1+|x|^{2}\right),\\
        |V(T,x)|\le\ &  2c_{T} \left(1+|x|^{2}\right).
    \end{aligned}
\end{equation}
According to the method of majorants, we consider the following equation 
\begin{equation}\label{eq:5-11}
    \left\{
    \begin{aligned}
        &-\dfrac{\partial z}{\partial s}(s,x)+\mathcal{A}z(s,x)=\dfrac{\delta}{2}|Dz(s,x)|^{2}+2c\left(1+|x|^{2}\right),\quad s\in[t,T),\\
        &z(T,x)=2 c_{T} \left(1+|x|^{2}\right),\quad x\in\brn,
    \end{aligned}
    \right.
\end{equation}
and to check whether it has a solution of the form 
\begin{equation}\label{eq:5-12}
    z(s,x)=\beta(s)\dfrac{|x|^{2}}{2}+\mu(s),
\end{equation}
with $\beta(s)$ and $\mu(s)$ being bounded on $[t,T]$. We see easily that $\beta(s)$ satisfies a Riccati equation 
\begin{equation*}
    \left\{
    \begin{aligned}
        &-\dfrac{\beta'(s)}{2}=2c+\dfrac{\delta}{2}\beta^{2}(s),\quad s\in[t,T),\\
        &\beta(T)=4 c_{T}. 
    \end{aligned}
    \right.
\end{equation*}
It has a bounded solution if 
\begin{equation}\label{eq:5-14}
    \arctan\left(2c_{T}\sqrt{\dfrac{\delta}{c}}\right) + (T-t)2\sqrt{\delta c}<\dfrac{\pi}{2},
\end{equation}
which is 
\begin{equation}\label{eq:5-15}
    \beta(s)=2\sqrt{\dfrac{c}{\delta}}\tan\left(\arctan\left(2c_{T}\sqrt{\dfrac{\delta}{c}} \right)+(T-s)2\sqrt{\delta c}\right);
\end{equation}
then, we see easily that 
\begin{equation}\label{eq:5-16}
    \mu(s)=2c_{T}+\int_{s}^{T}\left[2c+\dfrac{1}{2}\beta(\tau)\text{tr}(a)\right]d\tau. 
\end{equation}
We have the following \textit{a priori} estimate for $V(s,x)$, which is based on the standard maximum principle for parabolic equations; see \cite{9} for instance. 

\begin{proposition}\label{prop5-1}
Under Assumptions~\ref{eq:5-8}, \ref{assumption:h}-(i), \ref{assumption:F}-(i) and \ref{assumption:H}-(i), we make the smallness condition on $T-t$ such that \eqref{eq:5-14} holds. Define $z(s,x)$ as in \eqref{eq:5-11}, \eqref{eq:5-15} and \eqref{eq:5-16}. Suppose that a solution $V$ satisfies the HJB equation \eqref{eq:5-150}, and it also satisfies the following  conditions at infinity:
\begin{equation}\label{eq:5-30}
    \limsup_{|x|\rightarrow+\infty}\left(V(s,x)-z(s,x)\right)\le 0,\quad \liminf_{|x|\rightarrow+\infty}\left(V(s,x)+z(s,x)\right) \geq 0.
\end{equation}
Then, we have the following estimate 
\begin{equation}\label{eq:5-31}
    -z(s,x)\le V(s,x)\leq z(s,x),\quad (s,x)\in[t,T]\times\brn.
\end{equation}
\end{proposition}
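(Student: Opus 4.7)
The plan is to apply a classical comparison principle for parabolic PDEs on $[t,T]\times\brn$ to the two differences $w_{+}:=V-z$ and $w_{-}:=-V-z$, exploiting that $z$ has been constructed so that the worst-case majorization of the right-hand side of \eqref{eq:5-150} saturates \eqref{eq:5-11}.

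First I would record the pointwise bound on the right-hand side of \eqref{eq:5-150}: by Assumptions~\ref{assumption:H}-(i) and \ref{assumption:F}-(i),
$$\left|H\left(s,x,DV(s,x)\right)+\dfrac{dF}{d\nu}\left(m_{s}\right)(x)\right|\le\dfrac{\delta}{2}|DV(s,x)|^{2}+2c\left(1+|x|^{2}\right),$$
so the differential inequality \eqref{eq:5-10} holds when $V$ is replaced by either $V$ or $-V$, since the majorant depends on the gradient only through $|DV|^{2}$. Correspondingly, the terminal bound $|V(T,x)|\le 2c_{T}(1+|x|^{2})=z(T,x)$ is immediate from Assumptions~\ref{assumption:h}-(i) and \ref{assumption:F}-(i).

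Next, I would subtract \eqref{eq:5-11} from the PDE satisfied by $V$ and linearize via the algebraic identity $|DV|^{2}-|Dz|^{2}=(DV+Dz)\cdot D(V-z)$, which turns the quadratic nonlinearity into a first-order transport term in $w_{+}$. This gives
$$-\dfrac{\partial w_{+}}{\partial s}+\mathcal{A}w_{+}-b(s,x)\cdot Dw_{+}\le 0,\qquad b(s,x):=\dfrac{\delta}{2}\bigl(DV(s,x)+Dz(s,x)\bigr),$$
together with $w_{+}(T,\cdot)\le 0$. Under the first condition in \eqref{eq:5-30}, the standard maximum principle for non-degenerate parabolic operators (non-degeneracy is guaranteed by Assumption~\ref{eq:5-8}) on the unbounded domain $\brn$ then forces $w_{+}\le 0$, i.e.\ $V\le z$; see \cite{9}. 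Running the symmetric argument on $w_{-}:=-V-z$, using the second half of \eqref{eq:5-30}, yields $-V\le z$, and the two inequalities combine to \eqref{eq:5-31}.

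The main obstacle will be the unbounded-domain comparison step. Since $V$ is a priori only of quadratic growth and $Dz(s,x)=\beta(s)x$ is linear, the linearized drift $b$ grows linearly in $x$, so the drift does not lie in a standard uniqueness class and the usual Phragm\'en--Lindel\"of estimate is not automatic. The one-sided growth hypotheses in \eqref{eq:5-30} are precisely what select the branch of solutions on which the comparison closes, and the smallness condition \eqref{eq:5-14} is what guarantees that the Riccati profile $\beta$ and the shift $\mu$ remain finite on $[t,T]$, so that the majorant $z$ is actually available.
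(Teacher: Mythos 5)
Your proposal is correct and follows essentially the same route as the paper: subtract the majorant $z$, rewrite $\frac{\delta}{2}\left(|DV|^{2}-|Dz|^{2}\right)$ as a drift term acting on $D(V-z)$, and conclude by the maximum principle using the terminal inequality together with the conditions at infinity \eqref{eq:5-30} (and symmetrically for $-V-z$). The unbounded-drift concern you raise at the end is resolved in the paper precisely by the mechanism you hint at: the one-sided condition at infinity forces any strictly positive value of $V-z$ to be attained at an interior positive maximum with $s^{*}<T$, where $D(V-z)=0$ annihilates the drift term regardless of its linear growth, so no Phragm\'en--Lindel\"of-type uniqueness class is needed.
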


\begin{proof}
Let us denote by $\zeta^{-}(s,x):=V(s,x)-z(s,x).$ From inequality \eqref{eq:5-10}, equation \eqref{eq:5-11} and condition \eqref{eq:5-30}, we know that $\zeta^{-}$ satisfies
\begin{equation}\label{eq:5-32}
\begin{aligned}
    &-\dfrac{\partial \zeta^{-}}{\partial s}(s,x)+\mathcal{A}\zeta^{-}(s,x)\le \dfrac{\delta}{2}D\zeta^{-}(s,x)\cdot (DV(s,x)+Dz(s,x)), \quad s\in[t,T),\\
    &\qquad\qquad\qquad\qquad \zeta^{-}(T,x)\le 0,\qquad \limsup_{|x|\rightarrow+\infty}\:\zeta^{-}(s,x)\le 0.
\end{aligned}
\end{equation}
If $\zeta^{-}(s,x)$ is strictly positive at a point $\left(s^{*},x^{*}\right)$, then it has a positive local maximum (which we also call $\left(s^{*},x^{*}\right)$ for the sake of convenience). This is because of the continuity of
the function $\zeta^{-}$ and the condition at infinity; and from \eqref{eq:5-32} we know that $s^{*}<T$. By the maximum principle and the non-degeneracy assumption \ref{eq:5-8}, we also know that 
\begin{align*}
    -\dfrac{\partial \zeta^{-}}{\partial s}\left(s^{*},x^{*}\right)\geq 0, \quad \mathcal{A}\zeta^{-}\left(s^{*},x^{*}\right)>0,\quad D\zeta^{-}\left(s^{*},x^{*}\right)=0.
\end{align*}
Substituting these estimate back into the first inequality of \eqref{eq:5-32}, we get a contradiction. This proves the second inequality in \eqref{eq:5-31}. A similar argument holds for the first inequality of \eqref{eq:5-31}, which completes the proof.
\end{proof}

\subsection{\textit{A Priori} Estimate on the Boundedness of $|DV(s,x)|$}\label{subsec:DV}

In this part, we derive \textit{a priori} estimate on the gradient $DV(s,x)$ under Assumptions~\ref{eq:5-8}, \ref{assumption:h}-(ii), \ref{assumption:F}-(ii) and \ref{assumption:H}. We define 
\begin{equation*}\label{eq:5-33}
    \varpi(s,x)=\dfrac{1}{2}|DV(s,x)|^{2},\quad (s,x)\in[t,T]\times\brn, 
\end{equation*}
and we know from the definition that
\begin{align*}
    \dfrac{\partial \varpi}{\partial s}(s,x)=\sum_{k=1}^{n}\dfrac{\partial V}{\partial x_{k}}(s,x)\dfrac{\partial^{2}V}{\partial x_{k}\partial s}(s,x),\quad x=\left(x_1,\dots,x_n\right).
\end{align*}
To obtain the equation for $\varpi$, we differentiate the HJB equation \eqref{eq:5-150} with respect to $x_{k}$ to get 
\begin{align*}
    & -\dfrac{\partial^{2} V}{\partial s\partial x_{k}}(s,x)-\dfrac{1}{2}\sum_{i,j=1}^{n}a_{ij}\dfrac{\partial^{3}V}{\partial x_{i}\partial x_{j}\partial x_{k}}(s,x)\\
    =\ & \dfrac{\partial H}{\partial x_{k}}(s,x,DV(s,x))+\sum_{i=1}^{n}\dfrac{\partial H}{\partial p_{i}}(s,x,DV(s,x))\dfrac{\partial^{2}V}{\partial x_{i}\partial x_{k}}(s,x)+\dfrac{\partial}{\partial x_{k}}\dfrac{dF}{d\nu}\left(m_s\right)(x),
\end{align*}
and hence,
\begin{align*}
    &-\dfrac{\partial \varpi}{\partial s}(s,x)-\dfrac{1}{2}\sum_{i,j,k=1}^{n}a_{ij}\dfrac{\partial^{3}V}{\partial x_{i}\partial x_{j}\partial x_{k}}(s,x)\dfrac{\partial V}{\partial x_{k}}(s,x)\\
    =\ & \sum_{k=1}^{n}\dfrac{\partial H}{\partial x_{k}}(s,x,DV(s,x))\dfrac{\partial V}{\partial x_{k}}(s,x)+\sum_{i=1}^{n}\dfrac{\partial H}{\partial p_{i}}(s,x,DV(s,x))\dfrac{\partial \varpi}{\partial x_{i}}(s,x)\\
    & +\sum_{k=1}^{n}\dfrac{\partial}{\partial x_{k}}\dfrac{d F}{d\nu}F\left(m_s\right)(x)\dfrac{\partial V}{\partial x_{k}}(s,x).
\end{align*}
Finally, we get the equation for $\varpi$ as follows:
\begin{equation}\label{eq:5-34}
    \left\{
    \begin{aligned}
        &-\dfrac{\partial \varpi}{\partial s}(s,x)+\mathcal{A}\varpi(s,x)+\dfrac{1}{2}\sum_{i,j,k=1}^{n}a_{ij}\dfrac{\partial^{2}V}{\partial x_{k}\partial x_{i}}\dfrac{\partial^{2}V}{\partial x_{k}\partial x_{j}}= D_{x}H(s,x,DV(s,x))\cdot DV(s,x)\\
        &\qquad +D_{p} H(s,x,DV(s,x))\cdot D_{x}\varpi(s,x)+D\dfrac{dF}{d\nu}\left(m_s\right)(x)\cdot DV(s,x),\quad s\in[t,T),\\
        &\varpi(T,x)=\dfrac{1}{2}\left|Dh(x)+D\dfrac{d}{d\nu}F_{T}\left(m_T\right)(x)\right|^{2},\quad x\in\brn.
    \end{aligned}
    \right.
\end{equation}
From Assumption~\ref{assumption:H}-(ii), we know that 
\begin{align*}
    |D_{x}H(s,DV(s,x),x)|,\ |D_{p}H(s,DV(s,x),x)|\ \le\  c(1+|x|)+c\sqrt{2\varpi(s,x)}, 
\end{align*}
and therefore, from Assumption~\ref{assumption:F}-(ii),
\begin{align*}
    D_{x}H(s,DV(s,x),x)\cdot DV(s,x)\le\ &  c(1+|x|)\sqrt{2\varpi(s,x)}+2c\varpi(s,x),\\
    D\dfrac{dF}{d\nu}\left(m_s\right)(x)\cdot DV(s,x)\le\ & c(1+|x|)\sqrt{2\varpi(s,x)},\\
    D_{p}H(s,DV(s,x),x).D\varpi(s,x)\le\ & c(1+|x|)|D\varpi(s,x)|+c\sqrt{2\varpi(s,x)}|D\varpi(s,x)|.
\end{align*}
Collecting these results and applying Young's inequality, we know that for any $\eta>0$,
\begin{equation}\label{eq:5-40}
\begin{aligned}
    -\dfrac{\partial \varpi}{\partial s}(s,x)+\mathcal{A}\varpi(s,x)\le\ &  c\eta|D\varpi(s,x)|^{2}+c\left(4+\dfrac{1}{\eta}\right)\varpi(s,x)+c\left(1+\dfrac{1}{\eta}\right)\left(1+|x|^{2}\right),\\
    \varpi(T,x)\le\ & |Dh(x)|^{2}+\left|D\dfrac{dF_{T}}{d\nu}(m_T)(x)\right|^{2}\leq c_T(1+|x|^{2}).
\end{aligned}
\end{equation}
According to the method of majorants, similar as in Proposition~\ref{prop5-1}, we introduce the equation 
\begin{equation}\label{eq:5-200}
\left\{
\begin{aligned}
    &-\dfrac{\partial \overline{z}}{\partial s}(s,x)+\mathcal{A}\overline{z}(s,x)=c\eta|D\overline{z}(s,x)|^{2}+c\left(4+\dfrac{1}{\eta}\right)\overline{z}(s,x)+C\left(1+\dfrac{1}{\eta}\right)\left(1+|x|^{2}\right),\quad s\in[t,T),\\
    &\overline{z}(T,x)=c_T \left(1+|x|^{2}\right),\quad x\in\brn,
\end{aligned}
\right.
\end{equation}
and we want a solution of \eqref{eq:5-200} of the form
\begin{equation}\label{eq:5-201}
    \overline{z}(s,x)=\dfrac{1}{2}\overline{\beta}(s)|x|^{2}+\overline{\mu}(s),
\end{equation}
with $\beta(s)$ and $\mu(s)$ being bounded on $[t,T]$. It can be easily shown that $\beta(s)$ should satisfies the following equation:
\begin{equation}\label{eq:5-202}
    \left\{
    \begin{aligned}
        &-\dfrac{1}{2}\overline{\beta}'(s)=c\left(1+\dfrac{1}{\eta}\right)+\dfrac{c}{2}\left(4+\dfrac{1}{\eta}\right)\overline{\beta}(s)+c\eta\left|\overline{\beta}(s)\right|^{2},\quad s\in[t,T),\\
        &\overline{\beta}(T)=2c_T;
    \end{aligned}
    \right.
\end{equation}
while $\mu(s)$ should satisfies the following equation:
\begin{equation}\label{eq:5-203}
    \left\{
    \begin{aligned}
        &-\overline{\mu}'(s)-\dfrac{1}{2}\text{tr }a\:\overline{\beta}(s)=c\left(1+\dfrac{1}{\eta}\right)+c\left(4+\dfrac{1}{\eta}\right)\overline{\mu}(s),\quad s\in[t,T),\\
        &\overline{\mu}(T)=c_T.
    \end{aligned}
    \right.
\end{equation}
The first equation in \eqref{eq:5-202} also writes
\begin{equation*}\label{eq:5-204}
    -\dfrac{1}{2}\overline{\beta}'(s)=c\left(1+\dfrac{1}{\eta}\right)+c\eta\left(\overline{\beta}(s)+\frac{1}{\eta}+\dfrac{1}{4\eta^2}\right)^{2}-\dfrac{c}{\eta}\left(1+\dfrac{1}{4\eta}\right)^{2},\quad s\in[t,T).
\end{equation*}
We define the function for $\eta$ as
\begin{align*}
    l(\eta):=\dfrac{1}{\eta}\left(1+\dfrac{1}{4\eta}\right)^{2}-1-\dfrac{1}{\eta},\quad \eta>0,
\end{align*}
and note the fact that this function is monotone decreasing from $+\infty$ to $-1$ for $l\in(0,+\infty)$. Hence we can choose $\eta^*>0$ such that $l\left(\eta^*\right)=0$, i. e. 
\begin{align*}
    \left(1+\dfrac{1}{4\eta^*}\right)^{2}-\eta^*-1=0.
\end{align*}
For this $\eta^*$, equation \eqref{eq:5-202} reads 
\begin{equation*}
    \left\{
    \begin{aligned}
        &-\dfrac{1}{2}\overline{\beta}'(s)=c\eta^*\left(\overline{\beta}(s)+\frac{1}{\eta^*}+\dfrac{1}{4\left(\eta^*\right)^2}\right)^{2},\quad s\in[t,T),\\
        &\overline{\beta}(T)=2c_T,
    \end{aligned}
    \right.
\end{equation*}
which means 
\begin{align*}
    \dfrac{d}{ds}\left[\dfrac{1}{\overline{\beta}(s)+\frac{1}{\eta^*}+\dfrac{1}{4\left(\eta^*\right)^2}}\right]=2c\eta^*,\quad s\in[t,T),
\end{align*}
and then 
\begin{equation}\label{eq:5-205}
    \dfrac{1}{\overline{\beta}(s)+\frac{1}{\eta^*}+\dfrac{1}{4\left(\eta^*\right)^2}}=\dfrac{1}{2c_T+\frac{1}{\eta^*}+\dfrac{1}{4\left(\eta^*\right)^2}}-2c\eta^*(T-s),\quad s\in[t,T]. 
\end{equation}
To keep $\overline{\beta}(s)$ bounded on $(t,T)$, we need to assume the following smallness condition for $T-t$:
\begin{equation}
    2c\eta^*(T-t)\left(2c_T+\frac{1}{\eta^*}+\dfrac{1}{4\left(\eta^*\right)^2}\right)<1. \label{eq:5-206}
\end{equation}
Then, after knowing $\overline{\beta}(s)$, we obtain $\overline{\mu}(s)$ from \eqref{eq:5-203}: 
\begin{align}
    \overline{\mu}(s)=\ & c_T\exp \left[C\left(4+\dfrac{1}{\eta^*}\right)(T-s)\right] \notag \\
    &+ \int_{s}^{T}\left[c\left(1+\dfrac{1}{\eta^*}\right)+\dfrac{1}{2}\text{tr}(a)\cdot \overline{\beta}(\tau)\right]\exp \left[c\left(4+\dfrac{1}{\eta^*}\right)(T-\tau)\right]d\tau.  \label{eq:5-207}
\end{align}
We now can give \textit{a priori} eatimate for $DV(s,x)$.

\begin{proposition}\label{prop5-3}
Under Assumptions~\ref{eq:5-8}, \ref{assumption:h}-(ii), \ref{assumption:F}-(ii) and \ref{assumption:H}, we make the smallness condition on $T-t$ such that \eqref{eq:5-206} holds. Define $\overline{z}(s,x)$ as in \eqref{eq:5-201}, \eqref{eq:5-205} and \eqref{eq:5-207}. We also assume the following condition at infinity:
\begin{equation}\label{eq:5-51}
    \limsup_{|x|\rightarrow+\infty}\left(\varpi(s,x)-\overline{z}(s,x)\right)\le 0.
\end{equation}
Then, we have 
\begin{equation}\label{eq:5-52}
    \varpi(s,x)-\overline{z}(s,x)\le 0,\quad (s,x)\in[t,T]\times\brn.
\end{equation}
\end{proposition}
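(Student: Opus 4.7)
The plan is to mirror the structure of the proof of Proposition~\ref{prop5-1}, comparing $\varpi$ and the majorant $\overline{z}$ by means of a parabolic maximum principle applied to their difference. Set $\zeta(s,x) := \varpi(s,x) - \overline{z}(s,x)$. Subtracting the equation \eqref{eq:5-200} from the inequality \eqref{eq:5-40}, and using the identity $|D\varpi|^{2}-|D\overline{z}|^{2}=(D\varpi+D\overline{z})\cdot D\zeta$ to linearize the quadratic gradient term, I would derive the differential inequality
\begin{equation*}
-\dfrac{\partial \zeta}{\partial s}(s,x)+\mathcal{A}\zeta(s,x)\le c\eta\bigl(D\varpi(s,x)+D\overline{z}(s,x)\bigr)\cdot D\zeta(s,x)+c\Bigl(4+\dfrac{1}{\eta^{*}}\Bigr)\zeta(s,x),
\end{equation*}
valid on $[t,T)\times\brn$. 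The terminal condition \eqref{eq:5-40} compared with $\overline{z}(T,x)=c_{T}(1+|x|^{2})$ gives $\zeta(T,x)\le 0$, while the hypothesis \eqref{eq:5-51} yields $\limsup_{|x|\to\infty}\zeta(s,x)\le 0$.

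The main obstacle, in contrast with Proposition~\ref{prop5-1}, is the genuine zeroth-order term $c(4+1/\eta^{*})\zeta$ on the right-hand side: at a putative positive maximum one would get LHS $\ge 0$ but RHS $>0$, so the direct maximum-principle argument fails. To absorb this term I would introduce the exponential substitution $\widetilde{\zeta}(s,x):=e^{-K(T-s)}\zeta(s,x)$ with $K>c(4+1/\eta^{*})$. A direct computation, using $-\partial_{s}\zeta=e^{K(T-s)}(K\widetilde{\zeta}-\partial_{s}\widetilde{\zeta})$, converts the inequality into
\begin{equation*}
-\dfrac{\partial \widetilde{\zeta}}{\partial s}(s,x)+\mathcal{A}\widetilde{\zeta}(s,x)\le c\eta\bigl(D\varpi(s,x)+D\overline{z}(s,x)\bigr)\cdot D\widetilde{\zeta}(s,x)+\Bigl[c\Bigl(4+\dfrac{1}{\eta^{*}}\Bigr)-K\Bigr]\widetilde{\zeta}(s,x),
\end{equation*}
in which the zeroth-order coefficient is now strictly negative. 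The terminal and far-field conditions transfer to $\widetilde{\zeta}$ unchanged in sign.

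Suppose for contradiction that $\widetilde{\zeta}$ is strictly positive somewhere; by continuity and the condition at infinity it attains a positive maximum at some $(s^{*},x^{*})$, and the inequality $\widetilde{\zeta}(T,\cdot)\le 0$ forces $s^{*}<T$. At such a point, $D\widetilde{\zeta}(s^{*},x^{*})=0$, the Hessian $D^{2}\widetilde{\zeta}(s^{*},x^{*})$ is negative semidefinite (so $\mathcal{A}\widetilde{\zeta}(s^{*},x^{*})\ge 0$ by the non-degeneracy Assumption~\ref{eq:5-8}), and $-\partial_{s}\widetilde{\zeta}(s^{*},x^{*})\ge 0$ (whether $s^{*}$ is interior or equals $t$). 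Hence the left-hand side at $(s^{*},x^{*})$ is $\ge 0$, while the right-hand side equals $[c(4+1/\eta^{*})-K]\widetilde{\zeta}(s^{*},x^{*})<0$, a contradiction. Therefore $\widetilde{\zeta}\le 0$ everywhere on $[t,T]\times\brn$, and multiplying by $e^{K(T-s)}>0$ gives $\zeta\le 0$, which is exactly \eqref{eq:5-52}. The only subtlety beyond routine bookkeeping is the exponential shift required to neutralize the zeroth-order coefficient; once that is in place, the classical parabolic maximum principle closes the argument just as in Proposition~\ref{prop5-1}.
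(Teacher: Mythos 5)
Your proposal is correct and follows essentially the same route as the paper: form $\zeta=\varpi-\overline{z}$, derive the linear parabolic differential inequality from \eqref{eq:5-40} and \eqref{eq:5-200}, remove the zeroth-order term by an exponential-in-time rescaling (the paper multiplies by $\exp\bigl[c\bigl(4+\tfrac{1}{\eta^*}\bigr)s\bigr]$, you use $e^{-K(T-s)}$ with $K>c(4+\tfrac{1}{\eta^*})$, which is the same device up to a constant factor and the sign of the residual coefficient), and conclude by the maximum principle exactly as in Proposition~\ref{prop5-1}. Your explicit linearization $|D\varpi|^{2}-|D\overline{z}|^{2}=(D\varpi+D\overline{z})\cdot D\zeta$ and the strictly negative zeroth-order coefficient make the contradiction at the maximum slightly cleaner, but the argument is the same.
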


\begin{proof}
Similar as Proposition \ref{prop5-1}, we denote by $\overline{\zeta}(s,x):=\varpi(s,x)-\overline{z}(s,x)$. From \eqref{eq:5-40} and \eqref{eq:5-200}, we know that $\overline{\zeta}(s,x)$ satisfies the relation 
\begin{align*}
    -\dfrac{\partial \overline{\zeta}}{\partial s}(s,x)+\mathcal{A}\overline{\zeta}(s,x)\le\ & c\left(4+\dfrac{1}{\eta^*}\right)\overline{\zeta}(s,x)+c\eta^*\left(|D\varpi(s,x)|^{2}-|D\overline{z}(s,x)|^{2}\right),\quad s\in[t,T),\\
    \overline{\zeta}(T,x)\le\ & 0.
\end{align*}
By applying the same reasoning as that done in the proof of Proposition~\ref{prop5-1} to the function $\overline{\zeta}(s,x)\exp \left[c\left(4+\dfrac{1}{\eta^*}\right)s\right]$, we obtain the statement \eqref{eq:5-52}. 
\end{proof}

\subsection{\textit{A Priori} Estimate for Regularity in Time}

In this part, we shall give \textit{a priori} estimates on the regularity of $V$ in time under assumptions in Proposition \ref{prop5-3}. We first write the HJB equation \eqref{eq:5-150} as 
\begin{equation}\label{eq:5-53}
    \left\{
    \begin{aligned}
        &-\dfrac{\partial V}{\partial s}(s,x)+\mathcal{A}V(s,x)=Q(s,x),\quad s\in[t,T),\\
        &V(T,x)=Q_{T}(x),\quad x\in\brn,
    \end{aligned}
    \right.
\end{equation}
with the functions $Q$ and $Q_T$ being defined as
\begin{equation*}\label{eq:5-54}
    \begin{aligned}
        Q(s,x)=\ & H(s,x,DV(s,x))+\dfrac{dF}{d\nu}\left(m_s\right)(x),\\
        Q_{T}(x)=\ & h(x)+\dfrac{dF_{T}}{d\nu}\left(m_T\right)(x)
    \end{aligned}
\end{equation*}
From the results in Proposition~\ref{prop5-3}, we can state that under Assumptions~\ref{eq:5-8}, \ref{assumption:h}-(ii), \ref{assumption:F}-(ii) and \ref{assumption:H}, the small time condition \eqref{eq:5-206}, and the condition at infinity \eqref{eq:5-51} (linear growth for $DV$), we have
\begin{equation}\label{eq:5-55}
    |Q(s,x)|\le C(T)\left(1+|x|^{2}\right),\quad \left|Q_{T}(x)\right|\le C(T)\left(1+|x|^{2}\right),\quad \left|DQ_{T}(x)\right|\le C(T)(1+|x|);
\end{equation}
here and in the rest of this article, we use $C$ to denote by a generic constant depending only on the parameters $\left(\text{tr}(a),c_T,c,\delta\right)$ in Assumptions~\ref{eq:5-8}, \ref{assumption:h}, \ref{assumption:F} and \ref{assumption:H} on the coefficients $\left(\sigma,h,F,F_T,H\right)$, and use $C(T)$ to denote a generic constant depending on the abovementioned constants and also on $T$ as well. We first give \textit{a priori} estimate on the integrability for the derivative $\frac{\dd V}{\dd s}$. 

\begin{proposition}\label{prop5-4}
Under assumptions in Proposition \ref{prop5-3}, we have the following estimate 
\begin{equation}\label{eq:5-56}
    \int_{t}^{T}\int_{\brn}\left|\dfrac{\partial V}{\partial s}(s,x)\right|^{2}\pi_{\gamma}(x)dxds\le C(T),
\end{equation}
where 
\begin{equation}\label{eq:5-100}
    \pi_{\gamma}(x)=\dfrac{1}{\left(1+|x|^{2}\right)^{\gamma}},\quad x\in\brn,\quad \gamma>\dfrac{n}{2}+2.
\end{equation}
\end{proposition}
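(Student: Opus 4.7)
The strategy is a weighted energy estimate obtained by multiplying the HJB equation, written in the form $-\dfrac{\partial V}{\partial s}+\mathcal{A}V=Q$ with the source $Q$ and terminal datum $Q_T$ introduced in \eqref{eq:5-53}, by $\dfrac{\partial V}{\partial s}\,\pi_{\gamma}(x)$ and then integrating over $[t,T]\times\brn$. This yields the identity
\begin{equation*}
    \int_t^T\!\!\int_{\brn}\left|\dfrac{\partial V}{\partial s}(s,x)\right|^{2}\pi_{\gamma}(x)\,dxds=\int_t^T\!\!\int_{\brn}\mathcal{A}V(s,x)\dfrac{\partial V}{\partial s}(s,x)\pi_{\gamma}(x)\,dxds-\int_t^T\!\!\int_{\brn}Q(s,x)\dfrac{\partial V}{\partial s}(s,x)\pi_{\gamma}(x)\,dxds,
\end{equation*}
so the task reduces to controlling the two terms on the right in terms of known a priori bounds plus a fraction of the left-hand side that can be absorbed.

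The first term is treated by integration by parts in $x$. Using the symmetry of $a=\sigma\sigma^{\top}$, the principal contribution $-\tfrac{1}{2}\sum a_{ij}\int \partial_{x_ix_j}V\cdot \partial_sV\cdot \pi_\gamma\,dx$ rearranges into
\begin{equation*}
    \dfrac{1}{4}\dfrac{d}{ds}\int_{\brn}\sum_{i,j}a_{ij}\dfrac{\partial V}{\partial x_i}\dfrac{\partial V}{\partial x_j}\pi_{\gamma}(x)\,dx+\dfrac{1}{2}\int_{\brn}\sum_{i,j}a_{ij}\dfrac{\partial V}{\partial x_i}\dfrac{\partial V}{\partial s}\dfrac{\partial \pi_{\gamma}}{\partial x_j}\,dx,
\end{equation*}
whose time integral on $[t,T]$ produces only boundary terms of the shape $\int\langle aDV(s,x),DV(s,x)\rangle\pi_\gamma\,dx$ at $s=t,T$, together with a remainder involving $D\pi_\gamma$. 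The boundary terms are finite because Proposition~\ref{prop5-3} gives the pointwise bound $|DV(s,x)|\le C(T)(1+|x|)$, and by the choice $\gamma>\tfrac{n}{2}+2$ the weight $\pi_\gamma$ makes $(1+|x|)^2\pi_\gamma$ integrable on $\brn$; the terminal value is handled by Assumptions~\ref{assumption:h}-(ii) and \ref{assumption:F}-(ii). For the $D\pi_\gamma$ remainder, one uses the pointwise inequality $|D\pi_\gamma(x)|^{2}/\pi_{\gamma}(x)\le C\pi_{\gamma}(x)$, and then Young's inequality absorbs an $\varepsilon$-fraction of $\int|\partial_sV|^{2}\pi_\gamma$ into the left side while leaving behind a term controlled by $\int|DV|^2\pi_\gamma$, again finite.

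The second term is handled directly by Young's inequality, using the growth bound $|Q(s,x)|\le C(T)(1+|x|^2)$ from \eqref{eq:5-55}; the condition $\gamma>\tfrac{n}{2}+2$ is exactly what guarantees $\int_{\brn}(1+|x|^2)^2\pi_{\gamma}(x)\,dx<+\infty$, so $\int_t^T\!\!\int_{\brn}|Q|^{2}\pi_\gamma\,dxds\le C(T)$. Collecting the estimates, choosing $\varepsilon$ small enough to absorb the fractional copies of $\int|\partial_sV|^{2}\pi_\gamma$ that appear on the right, yields \eqref{eq:5-56}.

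The one genuinely delicate point will be justifying the integration by parts in $x$: since we are performing an a priori estimate, we must be able to discard the boundary contributions at infinity. This is why the polynomial weight $\pi_\gamma$ with $\gamma>\tfrac{n}{2}+2$ is introduced, and why the already-established bounds $|V|,|DV|\le C(T)(1+|x|^2)$ from Propositions~\ref{prop5-1} and~\ref{prop5-3} are used in conjunction with the decay $\pi_\gamma(x)\to 0$ as $|x|\to\infty$ to make the boundary-at-infinity terms vanish; a standard mollification/truncation argument can be invoked if needed to rigorously convert the formal integration by parts into a bona fide identity before passing to the limit.
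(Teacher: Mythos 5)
Your proposal is correct and follows essentially the same route as the paper: multiply the equation in the form $-\partial_s V+\mathcal{A}V=Q$ by $\partial_s V\,\pi_\gamma$, integrate by parts in $x$ to extract the total time derivative of $\int\langle a\,DV,DV\rangle\pi_\gamma\,dx$ plus a $D\pi_\gamma$ remainder, control the boundary terms at $s=t,T$ via Proposition~\ref{prop5-3} and the bound on $DQ_T$ in \eqref{eq:5-55}, and absorb the remaining terms by Young's inequality using the integrability guaranteed by $\gamma>\tfrac{n}{2}+2$. The only cosmetic difference is that you make the $\varepsilon$-absorption and the justification of the integration by parts at infinity explicit, which the paper leaves implicit.
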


\begin{proof}
From the equation \eqref{eq:5-53}, we can write 
\begin{align}
    &\int_{t}^{T}\int_{\brn}\left|\dfrac{\partial V}{\partial s}(s,x)\right|^{2}\pi_{\gamma}(x)dxds+\dfrac{1}{2}\sum_{i,j}^n a_{ij}\int_{t}^{T}\int_{\brn}\dfrac{\partial^{2}V}{\partial x_{i}\partial x_{j}}(s,x)\dfrac{\partial V}{\partial s}(s,x)\pi_{\gamma}(x)dxds \notag \\
    =\ & -\int_{t}^{T}\int_{\brn}Q(s,x)\dfrac{\partial V}{\partial s}(s,x)\pi_{\gamma}(x)dxds. \label{eq:5-100-1}
\end{align}
Note the fact that
\begin{align}
    &\dfrac{1}{2}\sum_{i,j}^n a_{ij}\int_{t}^{T}\int_{\brn}\dfrac{\partial^{2}V}{\partial x_{i}\partial x_{j}}(s,x)\dfrac{\partial V}{\partial s}(s,x)\pi_{\gamma}(x)dxds \notag \\
    =\ & -\dfrac{1}{2}\sum_{i,j}^n a_{ij}\int_{t}^{T}\int_{\brn}\dfrac{\partial V}{\partial x_{j}}(s,x)\dfrac{\partial^{2}V}{\partial s\partial x_{i}}(s,x)\pi_{\gamma}(x)dxds \notag \\
    &+\gamma\sum_{i,j}^n a_{ij}\int_{t}^{T}\int_{\brn}\dfrac{\partial V}{\partial x_{j}}(s,x)\dfrac{\partial V}{\partial s}(s,x)\dfrac{x_{i}}{1+|x|^{2}}\pi_{\gamma}(x)dxds, \label{eq:5-100-2}
\end{align}
and the first term of the right hand side of \eqref{eq:5-100-2} also satisfies
\begin{align}
    &-\dfrac{1}{2}\sum_{i,j}^n a_{ij} \int_{t}^{T}\int_{\brn}\dfrac{\partial V}{\partial x_{j}}(s,x)\dfrac{\partial^{2}V}{\partial s\partial x_{i}}(s,x)\pi_{\gamma}(x)dxds \notag \\
    =\ & -\frac{1}{4} \int_{t}^{T} \frac{d}{ds}\left[\sum_{i,j}^n a_{ij}\int_{\brn}\left(\dfrac{\partial V}{\partial x_{j}}(s,x)\dfrac{\partial V}{\partial x_{i}}(s,x)\right)\pi_{\gamma}(x) dx\right]ds \notag \\
    =\ & -\dfrac{1}{4}\sum_{i,j}^n a_{ij}\int_{\brn}\dfrac{\partial Q_{T}}{\partial x_{j}}(x)\dfrac{\partial Q_{T}}{\partial x_{i}}(x)\pi_{\gamma}(x)dx+\dfrac{1}{4}\sum_{i,j}^n a_{ij}\int_{\brn}\dfrac{\partial V}{\partial x_{j}}(t,x)\dfrac{\partial V}{\partial x_{i}}(t,x)\pi_{\gamma}(x)dx, \label{eq:5-100-3}
\end{align}
where we use the fact that the matrix $a$ is symmetric. Substituting \eqref{eq:5-100-2} and \eqref{eq:5-100-3} into \eqref{eq:5-100-1}, we have
\begin{align*}
    &\int_{t}^{T}\int_{\brn}\left|\dfrac{\partial V}{\partial s}(s,x)\right|^{2}\pi_{\gamma}(x)dxds  \\
    =\ & -\int_{t}^{T}\int_{\brn}Q(s,x)\dfrac{\partial V}{\partial s}(s,x)\pi_{\gamma}(x)dxds +\dfrac{1}{4}\sum_{i,j}^n a_{ij}\int_{\brn}\dfrac{\partial Q_{T}}{\partial x_{j}}(x)\dfrac{\partial Q_{T}}{\partial x_{i}}(x)\pi_{\gamma}(x)dx \\
    &  - \dfrac{1}{4}\sum_{i,j}^n a_{ij}\int_{\brn}\dfrac{\partial V}{\partial x_{j}}(t,x)\dfrac{\partial V}{\partial x_{i}}(t,x)\pi_{\gamma}(x)dx \\
    &-\gamma\sum_{i,j}^n a_{ij}\int_{t}^{T}\int_{\brn}\dfrac{\partial V}{\partial x_{j}}(s,x)\dfrac{\partial V}{\partial s}(s,x)\dfrac{x_{i}}{1+|x|^{2}}\pi_{\gamma}(x)dxds,
\end{align*}
then, by using Proposition~\ref{prop5-3} for $|DV(s,x)|^2$, estimates in \eqref{eq:5-55} and Young's inequality to the first and last term of the right hand side of the last inequality, 
we obtain that
\begin{align*}
    &\int_{t}^{T}\int_{\brn}\left|\dfrac{\partial V}{\partial s}(s,x)\right|^{2}\pi_{\gamma}(x)dxds\leq C(T)\int_{\brn}\left(1+|x|^{2}\right)^{2}\pi_{\gamma}(x)dx,
\end{align*}
and since $\gamma>\frac{n}{2}+2$, we obtain \eqref{eq:5-56}. 
\end{proof}

Another type of regularity in time follows from the regularity in space. Specifically, we give the following result, which shows the continuity of $V$ in time.

\begin{proposition}\label{prop:5-5}
Under assumptions in Proposition \ref{prop5-3}, we have the following estimate:
\begin{equation}\label{eq:5-500}
    \left|V\left(s_{2},x\right)-V\left(s_{1},x\right)\right|\le C(T)(1+|x|)\left(s_{2}-s_{1}\right)^{\frac{1}{2}}+C(T)(1+|x|^{2})\left(s_{2}-s_{1}\right),\quad t\le s_{1}<s_{2}\le T.
\end{equation}
\end{proposition}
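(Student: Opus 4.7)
The plan is to establish this time regularity via a probabilistic representation of $V$ together with the gradient bound already obtained in Proposition~\ref{prop5-3}. Concretely, I would introduce the simple diffusion $X_\tau = x + \sum_{j=1}^n \sigma^j(w^j_\tau - w^j_{s_1})$ started at $x$ at time $s_1$, and apply It\^o's formula to $V(\tau,X_\tau)$. Since the HJB equation \eqref{eq:5-53} reads $\partial_s V - \mathcal{A}V = -Q$ with $Q(s,x) := H(s,x,DV(s,x)) + \frac{dF}{d\nu}(m_s)(x)$, this yields
\begin{equation*}
V(s_2,X_{s_2}) - V(s_1,x) = -\int_{s_1}^{s_2} Q(\tau,X_\tau)\,d\tau + \sum_{j=1}^n \int_{s_1}^{s_2} DV(\tau,X_\tau)\cdot \sigma^j\, dw^j_\tau.
\end{equation*}
The linear growth of $DV$ from Proposition~\ref{prop5-3} together with the Gaussian moments of $X_\tau$ guarantees the stochastic integral is a true martingale, so after taking expectations I obtain the Feynman--Kac identity $V(s_1,x) = \mathbb{E}V(s_2,X_{s_2}) + \mathbb{E}\int_{s_1}^{s_2} Q(\tau,X_\tau)\,d\tau$.

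With this representation in hand, I would split
\begin{equation*}
V(s_2,x) - V(s_1,x) = \bigl[V(s_2,x) - \mathbb{E}V(s_2,X_{s_2})\bigr] - \mathbb{E}\int_{s_1}^{s_2} Q(\tau,X_\tau)\,d\tau
\end{equation*}
and estimate the two pieces separately. For the first bracket, the mean value theorem combined with the bound $|DV(s,y)| \le C(T)(1+|y|)$ from Proposition~\ref{prop5-3} gives
\begin{equation*}
|V(s_2,x) - \mathbb{E}V(s_2,X_{s_2})| \le C(T)\,\mathbb{E}\bigl[(1+|x|+|X_{s_2}-x|)\,|X_{s_2}-x|\bigr],
\end{equation*}
and then Cauchy--Schwarz applied to $\mathbb{E}|X_{s_2}-x|^2 = \mathrm{tr}(a)(s_2-s_1)$ produces the term $C(T)(1+|x|)(s_2-s_1)^{1/2} + C(T)(s_2-s_1)$. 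For the integral term, the growth bound \eqref{eq:5-55} yields $|Q(\tau,y)| \le C(T)(1+|y|^2)$, and $\mathbb{E}|X_\tau|^2 \le 2|x|^2 + 2\mathrm{tr}(a)(\tau-s_1)$ gives $\mathbb{E}\int_{s_1}^{s_2}|Q(\tau,X_\tau)|\,d\tau \le C(T)(1+|x|^2)(s_2-s_1) + C(T)(s_2-s_1)^2$. Absorbing the higher-order powers of $s_2-s_1$ (which are controlled by $T-t$) into the stated $C(T)$ constants yields \eqref{eq:5-500}.

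The only delicate point is the martingale property of the stochastic integral, but this is standard once one uses the \textit{a priori} gradient bound on $DV$ from Proposition~\ref{prop5-3} and the finiteness of Gaussian moments for $X_\tau$. Everything else is a routine combination of the growth estimate for $|V|$ (Proposition~\ref{prop5-1}), the growth estimate for $|DV|$ (Proposition~\ref{prop5-3}), and the controlled growth of $Q$ and $Q_T$ recorded in \eqref{eq:5-55}. No viscosity-type arguments or delicate PDE regularity beyond what has already been assembled is needed; the proof is essentially a parabolic Feynman--Kac reduction combined with the two preceding \textit{a priori} estimates.
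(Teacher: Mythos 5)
Your proposal is correct and follows essentially the same route as the paper: the paper's proof also rests on the Feynman--Kac representation of $V$ via $Q$ and $Q_T$, splits $V(s_2,x)-V(s_1,x)$ into exactly your two pieces (the term $\mathbb{E}\left[V(s_2,x)-V(s_2,x+\sigma(w_{s_2}-w_{s_1}))\right]$ handled by the gradient bound of Proposition~\ref{prop5-3}, and the short-time integral of $Q$ handled by \eqref{eq:5-55}), and obtains the same two contributions. Deriving the identity by It\^o's formula on $[s_1,s_2]$ rather than by shifting the Brownian increments in the full representation \eqref{eq:5-501} is only a cosmetic difference.
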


\begin{proof}
The fastest way is to use the following Feynman-Kac formula for $V(s,x)$:
\begin{equation}\label{eq:5-501}
    V(s,x)=\e\left[\int_{s}^{T}Q\left(\tau,x+\sigma(w_\tau-w_s)\right)d\tau+Q_{T}\left(x+\sigma(w_T-w_s)\right)\right].
\end{equation}
Take $s_{2}>s_{1}$, we can write that
\begin{align*}
    V(s_{2},x)-V(s_{1},x)=I+II, 
\end{align*}
where
\begin{align*}
    I:=\ & \e \bigg[\int_{s_{2}}^{T}\left[Q\left(\tau,x+\sigma(w_\tau-w_{s_{2}})\right)-Q\left(\tau,x+\sigma(w_\tau-w_{s_{1}})\right)\right]d\tau\\
    &\quad + Q_{T}\left(x+\sigma(w_T-w_{s_{2}})\right)-Q_{T}\left(x+\sigma(w_T-w_{s_{1}})\right)\bigg],\\
    II:=\ & -\e \left[\int_{s_{1}}^{s_{2}}Q\left(\tau,x+\sigma(w_\tau-w_{s_{1}})\right)d\tau\right].
\end{align*}
From \eqref{eq:5-501} and the independence of the Brownian increment, we know that
\begin{align*}
    I=\ & V(s_{2},x)- \e \bigg[\int_{s_{2}}^{T} Q\left(\tau,\left(x+\sigma(w_{s_2}-w_{s_{1}})\right)+\sigma(w_\tau-w_{s_{2}})\right) d\tau  \\
    &\qquad\qquad\qquad + Q_{T}\left(\left(x+\sigma(w_{s_2}-w_{s_{1}})\right)+\sigma(w_T-w_{s_{2}})\right)\bigg]\\
    =\ & \e\left[V(s_{2},x)-V\left(s_2, x+\sigma(w_{s_{2}}-w_{s_{1}})\right) \right]\\
    =\ & \e\left[\int_0^1 DV(s_2, x+\theta\sigma (w_{s_{2}}-w_{s_{1}}))\cdot \sigma (w_{s_{2}}-w_{s_{1}})d\theta\right],
\end{align*}
then, from the estimate on $DV$ in Proposition~\ref{prop5-3}, it easily follows 
\begin{align}
    |I|\le\ & C(T)(1+|x|) \sqrt{\e\left[ \left|w_{s_{2}}-w_{s_{1}}\right|^2\right]} \le C(T)(1+|x|)(s_{2}-s_{1})^{\frac{1}{2}}.\label{eq:5-502}
\end{align}
Furthermore, from \eqref{eq:5-55}, we can also obtain that
\begin{align}
    |II|\le\ &  \e\left[\int_{s_{1}}^{s_{2}}\left|Q\left(\tau,x+\sigma(w_\tau-w_{s_{1}})\right)\right|d\tau\right] \notag \\
    \le\ & C(T) \e\left[\int_{s_{1}}^{s_{2}}\left(1+|x|^2+\left|w_\tau-w_{s_{1}}\right|^2\right)d\tau\right] \notag \\
    \le\ &  C(T)(1+|x|^{2})(s_{2}-s_{1}).\label{eq:5-503}
\end{align}
From \eqref{eq:5-502} and \eqref{eq:5-503}, we obtain \eqref{eq:5-500}.
\end{proof}

\subsection{More \textit{A Priori} Estimates on $\dfrac{\partial V}{\partial x_{i}}(s,x)$}\label{subsec:V_x_i}

In this part, we aim to give further the \textit{a priori} estimate on the derivative 
\begin{equation}\label{def:zeta-V_xi}
    \mathcal{J}^i (s,x):=\dfrac{\partial V}{\partial x_{i}}(s,x),\quad i=1,2,\dots,n,
\end{equation}
and to simplify the notation, we omit the superscript $i$ of $\mathcal{J}^i$. We require Assumptions~\ref{eq:5-8}, \ref{assumption:h}, \ref{assumption:F} and \ref{assumption:H} hold true, and assume the small time condition \eqref{eq:5-206}, and the condition at infinity \eqref{eq:5-51}. We know from Proposition~\ref{prop5-3} that
\begin{equation}\label{eq:5-105}
    |\mathcal{J}(s,x)|\le C(T)(1+|x|),\quad (s,x)\in[t,T]\times\brn;
\end{equation}
here we note that $C(T)$ also depends on the dimension $n$. By differentiating the HJB equation \ref{eq:5-150} with respect to $x_{i}$, we know that
\begin{align*}
    -\dfrac{\partial}{\partial s}\dfrac{\partial V}{\partial x_{i}}(s,x)+\mathcal{A}\;\dfrac{\partial V}{\partial x_{i}}(s,x)=\ & D_{p}H(s,x,DV(s,x))\cdot D\dfrac{\partial V}{\partial x_{i}}(s,x)\\
    &+\dfrac{\partial H}{\partial x_{i}}(s,x,DV(s,x))+\dfrac{\partial}{\partial x_{i}}\dfrac{dF}{d\nu}\left(m_s\right)(x),\quad s\in[t,T),\\
    \dfrac{\partial V}{\partial x_{i}}(T,x)=\ & \dfrac{\partial h}{\partial x_{i}}(x)+\dfrac{\partial}{\partial x_{i}}\dfrac{d F_{T}}{d\nu}\left(m_T\right)(x),\quad x\in\brn.
\end{align*}
We obtain the following equation for $\mathcal{J}$:
\begin{equation}\label{eq:5-101}
    \left\{
    \begin{aligned}
        &-\dfrac{\partial\mathcal{J}}{\partial s}(s,x)+\mathcal{A}\mathcal{J}(s,x)-g(s,x)\cdot D\mathcal{J}(s,x)=l(s,x),\quad s\in[t,T),\\
        &\mathcal{J}(T,x)=l_{T}(x),\quad x\in\brn,
    \end{aligned}
    \right.
\end{equation}
where
\begin{equation}\label{define_g}
    \begin{aligned}
        g(s,x):=\ & D_{p}H(s,x,DV(s,x)),\\
        l(s,x):=\ & \dfrac{\partial}{\partial x_{i}}\dfrac{dF}{d\nu}(m_s)(x)+\dfrac{\partial H}{\partial x_{i}}(s,x,DV(s,x)),\\
        l_{T}(x):=\ & \dfrac{\partial h}{\partial x_{i}}(x)+\dfrac{\partial}{\partial x_{i}}\dfrac{d F_{T}}{d\nu}\left(m_T\right)(x).
    \end{aligned}
\end{equation}
From Assumptions~\ref{assumption:h}, \ref{assumption:F} and \ref{assumption:H} and Proposition~\ref{prop5-3}, we know that functions $(g,l,l_T)$ defined in \eqref{define_g} have the properties
\begin{align}\label{condition_g}
    |g(s,x)|\le C(T)(1+|x|)\quad |l(s,x)|\leq C(T)(1+|x|),\quad |l_{T}(x)|\leq C(1+|x|),\quad |Dl_{T}(x)|\leq C.
\end{align}
The equation \eqref{eq:5-101} has the difficulty that $g(s,x)$ can be unbounded and non-differentiable. Here, we consider \eqref{define_g} as a generic equation with more general coefficients $(g,l,l_T)$ without restricted in \eqref{define_g}; instead, we assume they are general coefficients satisfying the following assumptions, which is slightly weaker than \eqref{condition_g}:
\begin{equation}\label{eq:5-140}
\begin{aligned}
    &|g(s,x)|\leq C(T)(1+|x|),\quad |l(s,x)|\leq C(T)(1+|x|^{2}),\quad s\in[t,T],\\
    &|l_{T}(x)|\leq C(T)(1+|x|^{2}),\quad |Dl_{T}(x)|\le C(T)(1+|x|),\quad x\in\brn.
\end{aligned}
\end{equation}
We now give the following \textit{a priori} estimate for the solution of the generic equation \eqref{eq:5-101}.

\begin{proposition}\label{prop5-5}
Under Assumption~\ref{eq:5-8} and conditions in \eqref{eq:5-140}, the solution $\mathcal{J}$ of equation \eqref{eq:5-101} satisfies the following estimates
\begin{align}
    &|\mathcal{J}(s,x)|\le C(T)\left(1+|x|^{2}\right), \label{eq:5-130}\\
    &\int_{t}^{T}\int_{\brn}|D\mathcal{J}(s,x)|^{2}\pi_{\gamma}(x)dxds\le C(T), \label{eq:5-106} \\
    &\int_{t}^{T}\int_{\brn}\left|\dfrac{\partial \mathcal{J}}{\partial s}(s,x)\right|^{2}\pi_{\gamma+1}(x)dxds\le C(T), \label{eq:5-107}
\end{align}
with $\gamma\geq\dfrac{n}{2}+3$ and $\pi_\gamma$ being defined in \eqref{eq:5-100}.
\end{proposition}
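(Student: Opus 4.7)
The plan is to establish the three estimates in the order in which they appear, each one feeding into the next. All three proceed by standard parabolic techniques adapted to the weighted spaces used in Propositions~\ref{prop5-1}--\ref{prop5-4}: a maximum principle with a polynomial majorant for the pointwise bound, and weighted energy estimates (multiply the equation by a suitable test quantity, integrate by parts, and absorb lower-order terms by Young's inequality) for the integral bounds. The weight $\pi_{\gamma}$ is needed to guarantee integrability against the polynomial growth of $g$, $l$, $l_{T}$ and $\mathcal{J}$, while the switch to $\pi_{\gamma+1}$ in \eqref{eq:5-107} is precisely to compensate the extra factor $|x|^{2}$ coming from $|g|^{2}$.

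For \eqref{eq:5-130}, the cleanest route is Feynman--Kac: since \eqref{eq:5-101} is linear in $\mathcal{J}$ with bounded diffusion and drift $g$ of linear growth, $\mathcal{J}$ admits the representation
\begin{equation*}
    \mathcal{J}(s,x)=\e\bigg[l_{T}\bigl(\xi^{s,x}_{T}\bigr)+\int_{s}^{T}l\bigl(\tau,\xi^{s,x}_{\tau}\bigr)d\tau\bigg],
\end{equation*}
where $\xi^{s,x}_{\tau}$ solves $d\xi_{\tau}=g(\tau,\xi_{\tau})d\tau+\sigma dw_{\tau}$ with $\xi_{s}=x$. Using the linear growth of $g$ in \eqref{eq:5-140} one obtains by a standard Gronwall argument the second moment estimate $\e\bigl[\bigl|\xi^{s,x}_{\tau}\bigr|^{2}\bigr]\le C(T)(1+|x|^{2})$, and inserting this together with the quadratic growth of $l$ and $l_{T}$ yields \eqref{eq:5-130}. (If one prefers a purely PDE argument, the same bound follows from the method of majorants as in Propositions~\ref{prop5-1} and \ref{prop5-3} by constructing a super/subsolution of the form $\pm\bigl(\alpha(s)|x|^{2}+\beta(s)\bigr)$ on a sufficiently small time interval, then iterating in $T-t$.)

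For \eqref{eq:5-106}, multiply \eqref{eq:5-101} by $\mathcal{J}(s,x)\pi_{\gamma}(x)$ and integrate over $[s,T]\times\brn$. The time derivative produces
\begin{equation*}
    -\int_{s}^{T}\!\!\int_{\brn}\dfrac{\partial\mathcal{J}}{\partial\tau}\mathcal{J}\,\pi_{\gamma}\,dxd\tau=\tfrac{1}{2}\int_{\brn}\mathcal{J}^{2}(s,x)\pi_{\gamma}dx-\tfrac{1}{2}\int_{\brn}l_{T}^{2}(x)\pi_{\gamma}dx.
\end{equation*}
The operator $\mathcal{A}$ integrated by parts against $\mathcal{J}\pi_{\gamma}$ gives, thanks to Assumption~\ref{eq:5-8}, a coercive term $\tfrac{1}{2}\int a_{ij}\partial_{i}\mathcal{J}\partial_{j}\mathcal{J}\,\pi_{\gamma}$ of order $\int|D\mathcal{J}|^{2}\pi_{\gamma}$, together with a remainder involving $D\pi_{\gamma}=-2\gamma\,x\,\pi_{\gamma}/(1+|x|^{2})$ which is controlled by Young's inequality in terms of $\epsilon\int|D\mathcal{J}|^{2}\pi_{\gamma}+C_{\epsilon}\int\mathcal{J}^{2}\pi_{\gamma}$. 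The drift term $-\int g\cdot D\mathcal{J}\,\mathcal{J}\,\pi_{\gamma}$ is the one requiring care: from \eqref{eq:5-140} one has $|g|^{2}\le C(T)(1+|x|^{2})$, hence
\begin{equation*}
    \Big|\int g\cdot D\mathcal{J}\,\mathcal{J}\,\pi_{\gamma}\,dx\Big|\le\epsilon\int|D\mathcal{J}|^{2}\pi_{\gamma}dx+C_{\epsilon}(T)\int(1+|x|^{2})\mathcal{J}^{2}\pi_{\gamma}dx.
\end{equation*}
Bounding $\mathcal{J}^{2}$ by \eqref{eq:5-130} and using $\gamma\ge n/2+3$ makes the resulting weighted integral of $(1+|x|^{2})^{3}\pi_{\gamma}$ finite; similarly the source term $\int l\,\mathcal{J}\,\pi_{\gamma}$ is harmless by \eqref{eq:5-130} and the quadratic growth of $l$. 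Choosing $\epsilon$ small enough to absorb the $|D\mathcal{J}|^{2}$ terms into the coercive one and invoking Gronwall's inequality on the resulting estimate for $\int_{\brn}\mathcal{J}^{2}(s,\cdot)\pi_{\gamma}$ yields \eqref{eq:5-106}.

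For \eqref{eq:5-107}, mimic the argument of Proposition~\ref{prop5-4}: multiply \eqref{eq:5-101} by $\partial_{s}\mathcal{J}\,\pi_{\gamma+1}$ and integrate. The term $\int\mathcal{A}\mathcal{J}\cdot\partial_{s}\mathcal{J}\,\pi_{\gamma+1}$ reconstructs, after an integration by parts in $x$ and one in $s$, the $s$-boundary values of $\int|D\mathcal{J}|^{2}\pi_{\gamma+1}$ (finite at $s=T$ by \eqref{eq:5-140} and controlled in the interior by \eqref{eq:5-106}), up to a lower order term with the derivative of $\pi_{\gamma+1}$ which is again absorbed by Young. The drift contribution is estimated by
\begin{equation*}
    \int|g|^{2}|D\mathcal{J}|^{2}\pi_{\gamma+1}dx\le C(T)\int(1+|x|^{2})|D\mathcal{J}|^{2}\pi_{\gamma+1}dx\le 2C(T)\int|D\mathcal{J}|^{2}\pi_{\gamma}dx,
\end{equation*}
which is finite in time-integral by \eqref{eq:5-106}; this is the decisive reason for working with the heavier weight $\pi_{\gamma+1}$ here. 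The source term $\int l\,\partial_{s}\mathcal{J}\,\pi_{\gamma+1}$ is absorbed by $\tfrac{1}{2}\int|\partial_{s}\mathcal{J}|^{2}\pi_{\gamma+1}+\tfrac{1}{2}\int l^{2}\pi_{\gamma+1}$, and the second piece is finite by the quadratic growth of $l$ and $\gamma+1>n/2+2$. The main obstacle throughout is the unboundedness and lack of regularity of $g$; it is handled by never differentiating $g$ and by trading one power of $1+|x|^{2}$ for an extra unit of weight, which is exactly why \eqref{eq:5-107} uses $\pi_{\gamma+1}$ rather than $\pi_{\gamma}$.
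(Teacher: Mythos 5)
Your proof is correct, and for the two integral estimates \eqref{eq:5-106} and \eqref{eq:5-107} it follows essentially the same route as the paper: multiply by $\mathcal{J}\pi_{\gamma}$ (resp.\ $\partial_s\mathcal{J}\,\pi_{\gamma+1}$), integrate by parts, absorb the drift and weight-derivative terms by Young's inequality using $|g|^{2}\le C(T)(1+|x|^{2})$, and feed in the previously established bound together with the identity $(1+|x|^{2})\pi_{\gamma+1}=\pi_{\gamma}$ — your explanation of why the heavier weight is needed in \eqref{eq:5-107} matches the paper's computation exactly. Where you genuinely diverge is the pointwise bound \eqref{eq:5-130}: the paper works purely analytically, setting $\widetilde{\mathcal{J}}(s,x)=\mathcal{J}(s,x)e^{\alpha s}/(1+|x|^{2})$, choosing $\alpha$ so that the resulting zeroth-order coefficient is bounded below by some $\alpha_{0}>0$, and then invoking the method of majorants as in Propositions~\ref{prop5-1} and \ref{prop5-3}; notably this requires no smallness of $T-t$ and no iteration. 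Your primary route is a Feynman--Kac representation along the diffusion $d\xi_{\tau}=g(\tau,\xi_{\tau})d\tau+\sigma\,dw_{\tau}$ plus a Gronwall second-moment bound, which is shorter and transparent, but carries a hidden cost the paper's argument avoids: under \eqref{eq:5-140} the drift $g$ is only measurable with linear growth, so the SDE must be given a weak solution via Girsanov (as the paper does elsewhere, in Lemma~\ref{lem5-1}), and the identification of $\mathcal{J}$ with the stochastic representation requires applying It\^o's formula to the a priori solution together with a localization to handle the martingale term. These points are standard and fixable, so the argument goes through, but you should state them; alternatively your parenthetical PDE variant is closer in spirit to the paper, except that the exponential-in-$s$ rescaling makes the iteration in $T-t$ unnecessary.
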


\begin{proof}
We define 
\begin{align*}
    \widetilde{\mathcal{J}}(s,x)=\dfrac{\mathcal{J}(s,x)\exp(\alpha s)}{1+|x|^{2}},\quad (s,x)\in[t,T]\times\brn,
\end{align*}
where $\alpha>0$ is to be chosen. A simple calculation gives
\begin{align*}
    &-\dfrac{\partial\widetilde{\mathcal{J}}}{\partial s}(s,x)+\mathcal{A}\widetilde{\mathcal{J}}(s,x)+\left(\alpha-\dfrac{\text{tr}(a)}{1+|x|^{2}}-\dfrac{2x\cdot g(s,x)}{1+|x|^{2}}\right)\widetilde{\mathcal{J}}(s,x)\\
    =\ & D\widetilde{\mathcal{J}}(s,x)\cdot \left[g(s,x)+\dfrac{2ax}{1+|x|^{2}}\right]+\dfrac{l(s,x)\exp(\alpha s)}{1+|x|^{2}},\quad s\in[t,T),\\
    &\widetilde{\mathcal{J}}(T,x)=\dfrac{l_{T}(x)\exp(\alpha T)}{1+|x|^{2}},\quad x\in\brn
\end{align*}
From conditions in \eqref{eq:5-140}, we know that
\begin{align*}
    \dfrac{x\cdot g(s,x)}{1+|x|^{2}},\ \dfrac{l(s,x)}{1+|x|^{2}},\ \dfrac{l_{T}(x)}{1+|x|^{2}}\ \le\ C(T),
\end{align*}
therefore, these exits $\alpha,\alpha_0>0$ depending on $\text{tr}(a)$ and $C(T)$ such that
\begin{equation*}\label{eq:5-142}
    \alpha-\dfrac{\text{tr}(a)}{1+|x|^{2}}-\dfrac{2x\cdot g(s,x)}{1+|x|^{2}}\geq\alpha_{0}>0.
\end{equation*}
Similar to the ways of establishing Propositions~\ref{prop5-1} and \ref{prop5-3}, by the method of majorants, 
we know that
\begin{equation*}\label{eq:5-143}
    \left|\widetilde{\mathcal{J}}(s,x)\right|\le \exp(\alpha T)\cdot \max\left\{\sup_{(s,x)\in[t,T]\times\brn}\dfrac{|l(s,x)|}{\alpha_0\left(1+|x|^{2}\right)},\quad \sup_{x\in\brn}\dfrac{|l_{T}(x)|}{1+|x|^{2}}\right\},
\end{equation*}
which (together with \eqref{eq:5-140}) immediately implies \eqref{eq:5-130}. We next prove \eqref{eq:5-106}. From equation \eqref{eq:5-101}, we can compute that
\begin{align*}
    &\int_{t}^{T}\int_{\brn}\dfrac{\partial \mathcal{J}}{\partial s}(s,x)\mathcal{J}(s,x)\pi_{\gamma}(x)dxds\\
    =\ &-\dfrac{1}{2}\sum_{i,j}^n a_{ij}\int_{t}^{T}\int_{\brn}\dfrac{\partial^{2}\mathcal{J}}{\partial x_{i}\partial x_{j}}(s,x)\mathcal{J}(s,x)\pi_{\gamma}(x)dxds\\
    &-\int_{t}^{T}\int_{\brn}g(s,x)\cdot D\mathcal{J}(s,x)\mathcal{J}(s,x)\pi_{\gamma}(x)dxds - \int_{t}^{T}\int_{\brn}l(s,x)\mathcal{J}(s,x)\pi_{\gamma}(x)dxds.
\end{align*}
Note that 
\begin{align*}
    \int_{t}^{T}\int_{\brn}\dfrac{\partial \mathcal{J}}{\partial s}(s,x)\mathcal{J}(s,x)\pi_{\gamma}(x)dxds=\int_{t}^{T}\frac{d}{ds}\left[\frac{1}{2}\int_{\brn}\mathcal{J}^2(s,x)\pi_{\gamma}(x)dx\right]ds,
\end{align*}
therefore, from the formula for integration by parts, we have
\begin{align}
    & \dfrac{1}{2}\int_{\brn}l_{T}^{2}(x)\pi_{\gamma}(x)dx - \dfrac{1}{2}\int_{\brn}\mathcal{J}^{2}(t,x)\pi_{\gamma}(x)dx \notag \\
    =\ &\dfrac{1}{2}\sum_{i,j}^n a_{ij}\int_{t}^{T}\int_{\brn}\dfrac{\partial\mathcal{J}}{\partial x_{j}}(s,x)\dfrac{\partial\mathcal{J}}{\partial x_{i}}(s,x)\pi_{\gamma}(x)dxds \notag \\
    &-\gamma\sum_{i,j}^n a_{ij}\int_{t}^{T}\int_{\brn}\dfrac{\partial\mathcal{J}}{\partial x_{j}}(s,x)\mathcal{J}(s,x)\dfrac{x_{i}}{1+|x|^{2}}\pi_{\gamma}(x)dxds \notag \\
    &-\int_{t}^{T}\int_{\brn}g(s,x)\cdot D\mathcal{J}(s,x)\mathcal{J}(s,x)\pi_{\gamma}(x)dxds-\int_{t}^{T}\int_{\brn}l(s,x)\mathcal{J}(s,x)\pi_{\gamma}(x)dxds. \notag
\end{align}
Then, from the Young's inequality, we know that for any $\epsilon>0$, we have
\begin{align}
    &\dfrac{1}{2}\sum_{i,j}^n a_{ij}\int_{t}^{T}\int_{\brn}\dfrac{\partial\mathcal{J}}{\partial x_{j}}(s,x)\dfrac{\partial\mathcal{J}}{\partial x_{i}}(s,x)\pi_{\gamma}(x)dxds \notag \\
    \le\ & \gamma\sum_{i,j}^n a_{ij}\int_{t}^{T}\int_{\brn}\left|\dfrac{\partial\mathcal{J}}{\partial x_{j}}(s,x)\right| \cdot\left|\mathcal{J}(s,x)\right|\cdot\dfrac{|x_{i}|}{1+|x|^{2}}  \pi_{\gamma}(x) dxds \notag \\
    &+\int_{t}^{T}\int_{\brn}|g(s,x)|\cdot|\mathcal{J}(s,x)|\cdot |D\mathcal{J}(s,x)| \pi_{\gamma}(x)dxds \notag \\
    &+\int_{t}^{T}\int_{\brn} |l(s,x)|\cdot |\mathcal{J}(s,x)|\pi_{\gamma}(x)dxds +\dfrac{1}{2}\int_{\brn}l_{T}^{2}(x)\pi_{\gamma}(x)dx \notag \\
    \le\ & \epsilon \int_{t}^{T}\int_{\brn}\left|D\mathcal{J}(s,x)\right|^2 \pi_{\gamma}(x) dxds \notag \\
    &+ \frac{C(\text{tr}(a),\gamma,n)}{\epsilon}\int_{t}^{T}\int_{\brn} \left|\mathcal{J}(s,x)\right|^2\cdot\left[ |g(s,x)|^2 + \dfrac{|x|^2}{\left(1+|x|^{2}\right)^2}  \right]\pi_{\gamma}(x) dxds \notag \\
    &+\int_{t}^{T}\int_{\brn} |l(s,x)|\cdot |\mathcal{J}(s,x)|\pi_{\gamma}(x)dxds +\dfrac{1}{2}\int_{\brn}l_{T}^{2}(x)\pi_{\gamma}(x)dx, \notag 
\end{align}
where $C(\text{tr}(a),\gamma,n)$ is a constant depending only on $(\text{tr}(a),\gamma,n)$. By using Assumption~\ref{eq:5-8}, we can choose $\epsilon$ small enough to obtain
\begin{align}
    &\int_{t}^{T}\int_{\brn}\left|D\mathcal{J}(s,x)\right|^2 \pi_{\gamma}(x) dxds \notag \\
    \le\ & C\int_{t}^{T}\int_{\brn} \left|\mathcal{J}(s,x)\right|^2\cdot\left[ |g(s,x)|^2 + 1  \right]\pi_{\gamma}(x) dxds \notag \\
    &+C\int_{t}^{T}\int_{\brn} |l(s,x)|^2\pi_{\gamma}(x)dxds +C\int_{\brn}l_{T}^{2}(x)\pi_{\gamma}(x)dx. \label{prop5-5-1}
\end{align}
From \eqref{eq:5-140} and \eqref{eq:5-130}, and the fact that $\gamma\geq\dfrac{n}{2}+3$, we know that
\begin{align*}
    \int_{\brn}\left|\mathcal{J}(s,x)\right|^2\cdot\left[ |g(s,x)|^2 + 1  \right]\pi_{\gamma}(x)dx \le\ &  C(T) \int_{\brn}\left(1+|x|^{2}\right)^{3}\pi_{\gamma}(x)dx\le C(T),\\
    \int_{t}^{T}\int_{\brn} |l(s,x)|^2\pi_{\gamma}(x)dxds +\int_{\brn}l_{T}^{2}(x)\pi_{\gamma}(x)dx \le\ & C(T) \int_{\brn}\left(1+|x|^{2}\right)^{2}\pi_{\gamma}(x)dx\le C(T).
\end{align*}
Substituting the last estimates into \eqref{prop5-5-1}, we obtain \eqref{eq:5-106}. We next prove \eqref{eq:5-107}. By multiplying both sides of equation \eqref{eq:5-101} by $\dfrac{\partial\mathcal{J}}{\partial s}(s,x)\pi_{\gamma+1}(x)$ and then integrating it, we know that
\begin{align}
    0=\ & \int_{t}^{T}\int_{\brn}\left|\dfrac{\partial\mathcal{J}}{\partial s}(s,x)\right|^{2}\pi_{\gamma+1}(x)dxds+\dfrac{1}{2}\sum_{i,j}^n a_{ij}\int_{t}^{T}\int_{\brn}\dfrac{\partial^{2}\mathcal{J}}{\partial x_{i}\partial x_{j}}(s,x)\dfrac{\partial\mathcal{J}}{\partial s}(s,x)\pi_{\gamma+1}(x)dxds \notag \\
    &+\int_{t}^{T}\int_{\brn}g(s,x)\cdot D\mathcal{J}(s,x)\dfrac{\partial\mathcal{J}}{\partial s}(s,x)\pi_{\gamma+1}(x)dxds+\int_{t}^{T}\int_{\brn}l(s,x)\dfrac{\partial\mathcal{J}}{\partial s}(s,x)\pi_{\gamma+1}(x)dxds. \label{prop5-5-2}
\end{align}
The simple integration by parts gives
\begin{align}
    &\dfrac{1}{2}\sum_{i,j}^n a_{ij}\int_{t}^{T}\frac{d}{ds}\left[\int_{\brn} \dfrac{\partial\mathcal{J}}{\partial x_{j}}(s,x)\dfrac{\partial\mathcal{J}}{\partial x_i}(s,x)\pi_{\gamma+1}(x)dx\right]ds \notag \\
    =\ & \sum_{i,j}^n a_{ij} \int_{t}^{T}\int_{\brn}\dfrac{\partial\mathcal{J}}{\partial x_{j}}(s,x)\dfrac{\partial^2\mathcal{J}}{\partial s \partial x_{i}}(s,x)\pi_{\gamma+1}(x)dxds \notag \\
    =\ & - \sum_{i,j}^n a_{ij} \int_{t}^{T} \int_\brn \bigg[\dfrac{\partial^{2}\mathcal{J}}{\partial x_{i}\partial x_{j}}(s,x)\dfrac{\partial\mathcal{J}}{\partial s}(s,x)\pi_{\gamma+1}(x) \notag \\
    &\qquad\qquad\qquad\qquad - 2(\gamma+1) \dfrac{\partial\mathcal{J}}{\partial x_{j}}(s,x)\dfrac{\partial\mathcal{J}}{\partial s}(s,x)  \frac{x_i}{1+|x|^2}\pi_{\gamma+1}(x) \bigg]dxds. \label{prop5-5-3}
\end{align}
Combining \eqref{prop5-5-2} and \eqref{prop5-5-3}, we have
\begin{align*}
    &\int_{t}^{T}\int_{\brn}\left|\dfrac{\partial\mathcal{J}}{\partial s}(s,x)\right|^{2}\pi_{\gamma+1}(x)dxds \\
    =\ &\dfrac{1}{4}\sum_{i,j}^n a_{ij}\int_{\brn}\dfrac{\partial l_{T}}{\partial x_{j}}(x)\dfrac{\partial l_{T}}{\partial x_{i}}(x)\pi_{\gamma+1}(x)dx-\dfrac{1}{4}\sum_{i,j}^na_{ij}\int_{\brn}\dfrac{\partial\mathcal{J}}{\partial x_{j}}(t,x)\dfrac{\partial\mathcal{J}}{\partial x_{i}}(t,x)\pi_{\gamma+1}(x)dx\\
    &-(\gamma+1)\sum_{i,j}^n a_{ij}\int_{t}^{T}\int_{\brn}\dfrac{\partial\mathcal{J}}{\partial x_{j}}(s,x)\dfrac{\partial\mathcal{J}}{\partial s}(s,x)\dfrac{x_{i}}{1+|x|^{2}}\pi_{\gamma+1}(x)dxds\\
    &- \int_{t}^{T}\int_{\brn}g(s,x)\cdot D\mathcal{J}(s,x)\dfrac{\partial\mathcal{J}}{\partial s}(s,x)\pi_{\gamma+1}(x)dxds - \int_{t}^{T}\int_{\brn}l(s,x)\dfrac{\partial\mathcal{J}}{\partial s}(s,x)\pi_{\gamma+1}(x)dxds.
\end{align*}
Then, from Young's inequality, we know that for any $\epsilon>0$, we have
\begin{align*}
    &\int_{t}^{T}\int_{\brn}\left|\dfrac{\partial\mathcal{J}}{\partial s}(s,x)\right|^{2}\pi_{\gamma+1}(x)dxds \\
    \le\ & \epsilon \int_{t}^{T}\int_{\brn}\left|\dfrac{\partial\mathcal{J}}{\partial s}(s,x)\right|^{2}\pi_{\gamma+1}(x)dxds +\dfrac{1}{4}\sum_{i,j}^n a_{ij}\int_{\brn}\dfrac{\partial l_{T}}{\partial x_{j}}(x)\dfrac{\partial l_{T}}{\partial x_{i}}(x)\pi_{\gamma+1}(x)dx \\
    &+ \frac{C(\text{tr}(a),\gamma,n)}{\epsilon} \int_{t}^{T}\int_{\brn}\bigg[|D\mathcal{J}(s,x)|^2\cdot \left(|g(s,x)|^2+\dfrac{|x|^2}{\left(1+|x|^{2}\right)^2}\right) +|l(s,x)|^2 \bigg] \pi_{\gamma+1}(x)dxds,
\end{align*}
and by choosing $\epsilon$ small enough, we obtain
\begin{align}
    &\int_{t}^{T}\int_{\brn}\left|\dfrac{\partial\mathcal{J}}{\partial s}(s,x)\right|^{2}\pi_{\gamma+1}(x)dxds \notag \\
    \le\ & C \int_{\brn}|Dl_{T}(x)|^2\pi_{\gamma+1}(x)dx +C \int_{t}^{T}\int_{\brn} |l(s,x)|^2 \pi_{\gamma+1}(x)dxds \notag \\
    &+ C \int_{t}^{T}\int_{\brn} |D\mathcal{J}(s,x)|^2\cdot \left(|g(s,x)|^2+1\right) \pi_{\gamma+1}(x)dxds.  \label{prop5-5-4}
\end{align}
From \eqref{eq:5-140} and \eqref{eq:5-106}, we know that
\begin{align*}
    &(i) \int_{\brn}|Dl_{T}(x)|^2\pi_{\gamma+1}(x)dx \le C(T) \int_{\brn} \pi_{\gamma}(x)dxds \le C(T),\\
    &(ii)\int_{t}^{T}\int_{\brn} |l(s,x)|^2 \pi_{\gamma+1}(x)dxds \le C(T)\int_{t}^{T}\int_{\brn} \left(1+|x|^2\right)^2 \pi_{\gamma+1}(x)dxds \\
    &\;\qquad\qquad\qquad\qquad\qquad\qquad\qquad \le C(T)\int_{t}^{T}\int_{\brn} \left(1+|x|^2\right) \pi_{\gamma}(x)dxds \le C(T),\\
    &(iii)\int_{t}^{T}\int_{\brn} |D\mathcal{J}(s,x)|^2\cdot \left(|g(s,x)|^2+1\right) \pi_{\gamma+1}(x)dxds \\
    &\;\qquad\qquad\qquad\qquad\qquad\qquad\qquad \le C(T) \int_{t}^{T}\int_{\brn} |D\mathcal{J}(s,x)|^2\cdot \left(1+|x|^2\right) \pi_{\gamma+1}(x)dxds\\
    &\;\qquad\qquad\qquad\qquad\qquad\qquad\qquad = C(T) \int_{t}^{T}\int_{\brn} |D\mathcal{J}(s,x)|^2 \pi_{\gamma}(x)dxds \le C(T). 
\end{align*}
Substituting the last estimates into \eqref{prop5-5-4}, we obtain \eqref{eq:5-107}.
\end{proof}

We next give the following key result, which is a local H\"older estimate for the function $\mathcal{J}$.

\begin{theorem}\label{theo5-1}
Under Assumption~\ref{eq:5-8} and conditions in \eqref{eq:5-140}, for any bounded smooth domain $\mathcal{O}$ of $\brn$, there exists $\delta_{0}<\frac{1}{2}$ depending only on $\mathcal{\overline{O}}$, $T-t$ and the parameters in assumptions on the coefficients, such that the solution $\mathcal{J}$ of equation \eqref{eq:5-101} satisfies
\begin{equation}\label{eq:5-108}
    \sup_{x_{1},x_{2}\in\mathcal{\overline{O}},\ s_{1},s_{2}\in[t,T]}\dfrac{\left|\mathcal{J}(s_1, x_{1})-\mathcal{J}(s_2, x_{2})\right|}{|x_{1}-x_{2}|^{2\delta_{0}}+|s_{1}-s_{2}|^{\delta_{0}}}\le C(T,\mathcal{O}),
\end{equation}
where $C(T,\mathcal{O})$ is a constant depending only on $(T,\mathcal{O})$ and the parameters in assumptions on the coefficients.
\end{theorem}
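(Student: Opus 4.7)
The plan is to localize the backward parabolic equation \eqref{eq:5-101} to a bounded spatial region, where the unbounded-growth coefficients become uniformly bounded, and then invoke the classical parabolic De Giorgi-Nash-Moser H\"older estimate for weak solutions of uniformly parabolic equations with bounded measurable coefficients, combined with a boundary regularity step at $s=T$ that exploits the Lipschitz control on the terminal data on bounded sets.

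First I would fix an auxiliary smooth bounded domain $\mathcal{O}'$ with $\overline{\mathcal{O}} \subset \mathcal{O}'$ and $\operatorname{dist}(\overline{\mathcal{O}}, \partial \mathcal{O}') > 0$. On $[t,T] \times \overline{\mathcal{O}'}$, the conditions \eqref{eq:5-140} yield the uniform bound $|g|+|l|+|l_{T}|+|Dl_{T}| \le C(T,\mathcal{O}')$, and Proposition~\ref{prop5-5} provides $\mathcal{J} \in L^{\infty}([t,T]\times\overline{\mathcal{O}'})$ together with $D\mathcal{J} \in L^{2}([t,T]\times\overline{\mathcal{O}'})$, since the weights $\pi_{\gamma}$ are bounded above and below by positive constants on the compact set $\overline{\mathcal{O}'}$. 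Because $a = \sigma\sigma^{\top}$ is constant and invertible, \eqref{eq:5-101} can be recast in divergence form
\begin{equation*}
    -\dfrac{\partial \mathcal{J}}{\partial s} - \dfrac{1}{2}\sum_{i,j=1}^{n}\dfrac{\partial}{\partial x_{i}}\!\left(a_{ij}\dfrac{\partial \mathcal{J}}{\partial x_{j}}\right) - g\cdot D\mathcal{J} = l,
\end{equation*}
exhibiting $\mathcal{J}$ as a bounded weak solution of a uniformly parabolic divergence-form equation with bounded measurable drift and bounded source on $[t,T]\times\overline{\mathcal{O}'}$. I would then apply the classical parabolic De Giorgi-Nash-Moser interior H\"older estimate (equivalently, the Krylov-Safonov theorem in non-divergence form) on a chain of intermediate cylinders nested between $\overline{\mathcal{O}}$ and $\mathcal{O}'$ to obtain an exponent $\delta_{0} \in (0, 1/2)$ depending only on the ellipticity of $a$, the $L^{\infty}$ bound of $g$ on $[t,T]\times\overline{\mathcal{O}'}$, and the geometry of $\overline{\mathcal{O}}\subset\mathcal{O}'$, together with a parabolic H\"older bound on $\mathcal{J}$ over $[t,T']\times\overline{\mathcal{O}}$ for any $T'<T$.

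To extend the estimate up to $s=T$, which is the principal obstacle, I would set $w := \mathcal{J} - l_{T}$ so that $w$ satisfies the same uniformly parabolic equation with modified bounded right-hand side $l + \mathcal{A}l_{T} - g\cdot Dl_{T}$ and zero terminal condition $w(T,\cdot)=0$. The time-reversal $\tau = T - s$ converts this into a forward Cauchy problem with vanishing initial datum and bounded source, for which the standard boundary H\"older estimate yields a modulus of continuity uniform down to $\tau=0$; adding back $l_{T}$, which is Lipschitz in $x$ on $\overline{\mathcal{O}'}$, then completes \eqref{eq:5-108} on all of $[t,T]\times\overline{\mathcal{O}}$. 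The difficulty of the argument lies precisely in this boundary step, because the interior De Giorgi-Nash-Moser estimate is insensitive to the terminal data: obtaining a uniform H\"older modulus up to $s=T$ requires the Lipschitz regularity that Assumptions~\ref{assumption:h}-(ii) and \ref{assumption:F}-(ii) grant to $l_{T}$ on compact subsets of $\brn$, and the restriction $\delta_{0}<1/2$ reflects the weak regularity assumed on $g$ and on the terminal data.
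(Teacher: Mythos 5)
First, a point of comparison: the paper does not actually prove Theorem~\ref{theo5-1}; its ``proof'' defers entirely to a forthcoming separate article and to the quasilinear-systems estimates of \cite{4}, so there is no in-paper argument to measure your proposal against. Your overall strategy --- localize to $\overline{\mathcal{O}}\subset\mathcal{O}'$ where \eqref{eq:5-140} makes $g$, $l$, $l_T$, $Dl_T$ uniformly bounded, recast the constant-coefficient operator in divergence form, feed the local boundedness \eqref{eq:5-130} and local $H^1$ integrability \eqref{eq:5-106} from Proposition~\ref{prop5-5} into the parabolic De Giorgi--Nash--Moser (or Krylov--Safonov) interior estimate, and treat $s=T$ separately --- is the natural classical route, and the interior part is sound: a bounded weak solution of a uniformly parabolic divergence-form equation with locally bounded drift and source is locally $C^{\alpha,\alpha/2}$, which is exactly the anisotropic modulus $|x_1-x_2|^{2\delta_0}+|s_1-s_2|^{\delta_0}$ with $\delta_0=\alpha/2<1/2$.

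There is, however, a genuine gap in your terminal-boundary step. Writing $w=\mathcal{J}-l_T$ produces the source term $\mathcal{A}l_T$, which requires $l_T$ to have (locally bounded) second derivatives; the hypotheses \eqref{eq:5-140} only give $|Dl_T(x)|\le C(T)(1+|x|)$, and nothing about $D^2l_T$. This is not a removable technicality in the intended application: there $l_T=\partial_{x_i}h+\partial_{x_i}\frac{dF_T}{d\nu}(m_T)$ is already a first derivative of the data, so $\mathcal{A}l_T$ would demand third-order derivatives of $h$ and of $\frac{dF_T}{d\nu}(m)(\cdot)$, strictly beyond Assumptions~\ref{assumption:h}-(iii) and \ref{assumption:F}-(iii) and contrary to the paper's stated aim of minimal regularity. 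The step can be repaired without strengthening the hypotheses: either mollify $l_T$ at scale $\varepsilon$, apply your argument to the mollified terminal datum (whose second derivatives are $O(\varepsilon^{-1})$) and optimize $\varepsilon$ against $T-s$, or --- more cleanly --- use the Feynman--Kac representation of $\mathcal{J}$ along the diffusion with drift $g$ to get directly $|\mathcal{J}(s,x)-l_T(x)|\le C(T)(1+|x|^2)(T-s)^{1/2}$ from the global local-Lipschitz bound $|l_T(y)-l_T(x)|\le C(1+|x|+|y|)|y-x|$, and then interpolate this with the interior $C^{\alpha,\alpha/2}$ estimate on parabolic cylinders touching $\{s=T\}$ by the standard iteration. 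As written, though, the boundary step would fail under \eqref{eq:5-140}.
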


\begin{proof}
The proof follows a similar idea as our previous result in \cite[Section 7]{4}, which gives a local H\"older estimate for class of quasilinear parabolic equations associated with BSDEs. Here, the equations, settings and assumptions are a bit different. The proof is highly technical and will be given in a separate article. 
\end{proof}

\begin{remark}
    The H\"older estimate for the linear parabolic equation \eqref{eq:5-101} under conditions in \eqref{eq:5-140} is used not only here for the HJB equation, but will also be used for the study for the linear functional derivatives for the HJB equation and the master equation (and associated auxiliary PDEs), which makes it important for the study for the mean field type control problems. 
\end{remark}

As a direct consequence of Proposition~\ref{prop5-5}, Theorem~\ref{theo5-1} and the properties in \eqref{condition_g}, we know that the function defined in \eqref{def:zeta-V_xi} satisfies \eqref{eq:5-130}, \eqref{eq:5-106}, \eqref{eq:5-107} and \eqref{eq:5-108}, under Assumptions~\ref{eq:5-8}, \ref{assumption:h}, \ref{assumption:F} and \ref{assumption:H}, the small time condition \eqref{eq:5-206}, and the condition at infinity \eqref{eq:5-51}. 

\subsection*{Summary of Properties of $V(s,x)$ and $m_s$}

We here collect all the \textit{a priori} estimates of the solution $V(s,x)$ of the HJB equation \eqref{eq:5-150}. They are independent of the probability measure $m_s$. Under Assumptions~\ref{eq:5-8}, \ref{assumption:h}, \ref{assumption:F} and \ref{assumption:H}, we assume that $T-t$ satisfies the limitations \eqref{eq:5-14} and \eqref{eq:5-206}, and assume naturally that the conditions at infinity \eqref{eq:5-30} and \eqref{eq:5-51} (quadratic growth for $V$ and linear growth for $DV$) hold true. The function $V(s,x)$ satisfies the following \textit{a priori} estimates: for $s,s_1,s_2\in[t,T]$ and $x\in\brn$,
\begin{table}[!ht]
    \centering
    \resizebox{\textwidth}{!}{
    \begin{tabular}{|c|c|c|}\hline
        \multirow{2}{*}{Growth condition} & $|V(s,x)|\le C(T)(1+|x|^{2})$ \\ \cline{2-2}
        \multirow{2}{*}{} & $|DV(s,x)|\le C(T)(1+|x|)$ \\ \hline
        Continuity in time & $\displaystyle \left|V(s_2, x)-V(s_1,x)\right|\le C(T)(1+|x|)\left|s_{2}-s_{1}\right|^{\frac{1}{2}}+C(T)\left(1+|x|^{2}\right)\left|s_{2}-s_{1}\right|$ \\ \hline
        \multirow{3}{*}{Integrability condition} & $\displaystyle \int_{t}^{T}\int_{\brn}\left|\dfrac{\partial V}{\partial s}(s,x)\right|^{2}\pi_{\gamma}(x)ds\le C(T)$,\qquad $\gamma\geq\dfrac{n}{2}+3$ \\ \cline{2-2}
        \multirow{3}{*}{} & $\displaystyle \int_{t}^{T}\int_{\brn}\left|D^{2}V(s,x)\right|^{2}\pi_{\gamma}(x)ds\leq C(T)$,\qquad $\gamma\geq\dfrac{n}{2}+3$ \\ \cline{2-2}
        \multirow{3}{*}{} & $\displaystyle \int_{t}^{T}\int_{\brn}\left|\dfrac{\partial}{\partial s}DV(s,x)\right|^{2}\pi_{\gamma+1}(x)ds\leq C(T)$,\qquad $\gamma\geq\dfrac{n}{2}+3$ \\ \hline
        \multirow{2}{*}{Local H\"older regularity} & $\forall\mathcal{O}\subset\brn$ bounded and smooth, $\exists\ \delta_{0}<\frac{1}{2}$ depending on $\mathcal{\overline{O}}$ and $T$, s.t.  \\ 
        \multirow{2}{*}{} & $\displaystyle \sup_{x_{1},x_{2}\in\mathcal{\overline{O}},\ s_{1},s_{2}\in[t,T]}\dfrac{\left|DV(s_{1},x_{1})-DV(s_{2},x_{2})\right|}{\left|x_{1}-x_{2}\right|^{2\delta_{9}}+\left|s_{1}-s_{2}\right|^{\delta_{0}}}\leq C(T,\mathcal{O})$ \\ \hline
    \end{tabular} }
    \caption{\textit{A priori} estimates for $V$}\label{table:1}
\end{table}
\noindent 
Here, the constants $C(T)$ (and $C(T,\mathcal{O})$) depend only on $T$ (and also $\mathcal{O}$), and the parameters in Assumptions~\ref{eq:5-8}, \ref{assumption:h}, \ref{assumption:F} and \ref{assumption:H} on the coefficients; and they do not depend on the probability measure $m_s$.

The probability $m_s$ is uniquely defined in terms of $DV(s,x),\ t\le s\le T$ by the formula 
\begin{equation}\label{eq:5-114}
    \int_{\brn}\varphi(x)dm_s(x)=\int_{\brn}\Psi(t,x)dm(x)
\end{equation}
for all continuous and bounded  function $\varphi(x)$ on $\brn$, where $\Psi(\tau,x),\ t\le\tau\le s$ is the solution of the equation 
\begin{equation}\label{eq:5-115}
\left\{
\begin{aligned}
    &-\dfrac{\partial\Psi(\tau,x)}{\partial\tau}+\mathcal{A}\Psi(\tau,x)=D\Psi(\tau,x)\cdot D_{p}H(\tau,x,DV(\tau,x)),\quad t\le \tau<s,\\
    &\Psi(s,x)=\varphi(x),\quad x\in\brn.
\end{aligned}
\right.
\end{equation}

\subsection{Solution of the Local-in-time HJB Equation}\label{subsec:main}

In this subsection, we aim to show the existence of a solution $V(x,s$) of the HJB equation \eqref{eq:5-150}. 
The functional space is the Hilbert space $L^{2}\left(t,T;H_{\pi_{\gamma}}^{1}(\brn)\right)$ of functions $\varphi:[t,T]\times\brn\to \br$ such that 
\begin{align*}
    \int_{t}^{T}|\varphi(s,x)|^{2}\pi_{\gamma}(x)dxds+\int_{t}^{T}|D\varphi(s,x)|^{2}\pi_{\gamma}(x)dxds<+\infty,
\end{align*}
with $\pi_{\gamma}(x)$ being given by \eqref{eq:5-100} for $\gamma\geq\frac{n}{2}+3$. We define the subset of $L^{2}(t,T;H_{\pi_{\gamma}}^{1}(\brn))$, denoted by $B_{T}$, of functions satisfying conditions in Table~\ref{table:1}. The following result shows that the HJB equation \eqref{eq:5-150} has a solution in $B_{T}$. 

\begin{theorem}\label{theo5-2} 
Under Assumptions~\ref{eq:5-8}, \ref{assumption:h}, \ref{assumption:F} and \ref{assumption:H}, we assume the interval $T-t$ is restricted by conditions \eqref{eq:5-14} and \eqref{eq:5-206}. We add to equation \eqref{eq:5-150} conditions at infinity \eqref{eq:5-30} and \eqref{eq:5-51} (quadratic growth for $V(s,x)$ and linear growth for $DV(s,x))$. Then, there exists a solution $V$ of the HJB equation \eqref{eq:5-150} in the subset $B_{T}$ of the Hilbert space $L^{2}\left(t,T;H_{\pi_{\gamma}}^{1}(\brn)\right)$,
defined by conditions in Table~\ref{table:1}. 
\end{theorem}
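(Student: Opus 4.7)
The plan is to apply Schauder's fixed-point theorem to a self-map $\Gamma$ on the convex set $B_T$. Given any candidate $V\in B_T$, I first use the pair of equations \eqref{eq:5-151}--\eqref{eq:5-152} (with $DV$ in place of $DV^{m,t}$) to reconstruct an auxiliary probability-measure flow $\{m_s^V\}_{s\in[t,T]}$. I then define $\Gamma(V)=\widetilde V$ as the solution of the quasilinear parabolic equation
\begin{equation*}
    -\dfrac{\partial\widetilde V}{\partial s}+\mathcal{A}\widetilde V=H\left(s,x,D\widetilde V\right)+\dfrac{dF}{d\nu}\left(m_s^V\right)(x),\quad \widetilde V(T,x)=h(x)+\dfrac{dF_T}{d\nu}\left(m_T^V\right)(x),
\end{equation*}
with the measure flow \emph{frozen}; this equation falls within the classical variational framework for quasilinear parabolic equations with quadratically growing Hamiltonian and non-degenerate diffusion (Assumption~\ref{eq:5-8}). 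A fixed point of $\Gamma$ is by construction a solution of \eqref{eq:5-150}--\eqref{eq:5-152}.

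\textbf{Key steps.}
First I would verify that $\Gamma$ is well-defined. The drift $D_pH(\tau,z,DV(\tau,z))$ appearing in \eqref{eq:5-152} has linear growth in $z$ by Assumption~\ref{assumption:H}-(ii) and is locally H\"older continuous by Theorem~\ref{theo5-1} applied componentwise to $DV$, hence \eqref{eq:5-152} admits a classical solution $\Psi$, and \eqref{eq:5-151} defines a probability measure $m_s^V\in\pr_2(\brn)$ whose second moment is controlled uniformly in $V\in B_T$ (via the Feynman--Kac representation and the linear-growth bound on the drift). Second, with source and terminal data having at most quadratic growth (Assumptions~\ref{assumption:h}-(i) and \ref{assumption:F}-(i)), the frozen HJB equation is solvable in $L^{2}(t,T;H^{1}_{\pi_{\gamma}}(\brn))$ by standard variational arguments; the small-time conditions \eqref{eq:5-14} and \eqref{eq:5-206} are exactly what is required for the Riccati-type majorant ODEs of Propositions~\ref{prop5-1} and \ref{prop5-3} to stay bounded, preventing blow-up of the gradient. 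Third, all estimates in Table~\ref{table:1} are derived from the coefficient assumptions \emph{independently} of the specific measure flow, so $\Gamma$ maps $B_T$ into $B_T$.

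\textbf{Main obstacle.}
The delicate step is the continuity of $\Gamma$ and passage to the limit in the nonlinearities. I would equip $B_T$ with the topology of locally uniform convergence of both $V$ and $DV$; the local H\"older estimate of Theorem~\ref{theo5-1}, combined with Arzel\`a--Ascoli and the weighted $L^2$ bounds on $\partial_s V$ and $D^{2}V$ from Propositions~\ref{prop5-4} and \ref{prop5-5}, yields precompactness of $B_T$ in this topology. If $V^k\to V$ in this sense, the drift in \eqref{eq:5-152} converges locally uniformly, the Feynman--Kac representation gives $m_s^{V^k}\to m_s^V$ in $\pr_2(\brn)$, and the continuity of $m\mapsto\tfrac{dF}{d\nu}(m)$ and $m\mapsto\tfrac{dF_T}{d\nu}(m)$ from Assumption~\ref{assumption:F} transfers this to the source and terminal data of the frozen HJB equation. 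The hardest part is precisely passing to the limit in the quadratic Hamiltonian $H(s,x,DV^k)$ through the variational (rather than pointwise) formulation, since one has only weighted $L^2$ control on $D^{2}V^k$; this is handled by testing the frozen equation against $\pi_\gamma$-weighted functions and absorbing the quadratic gradient term via Young's inequality, again exploiting the non-degeneracy of $a$.

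With $B_T$ convex, precompact, and continuously mapped into itself by $\Gamma$, Schauder's fixed-point theorem produces a fixed point $V$, which is the sought solution of \eqref{eq:5-150} satisfying all the estimates of Table~\ref{table:1}.
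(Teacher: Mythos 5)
Your proposal is correct and follows essentially the same route as the paper: a Schauder fixed-point argument on $B_{T}$, with the map decomposed as $V\mapsto m^{V}\mapsto\widetilde V$, self-mapping guaranteed by the measure-independent \textit{a priori} estimates of Table~\ref{table:1}, compactness obtained from the local H\"older/growth bounds via Arzel\`a--Ascoli together with the decay of the weight $\pi_{\gamma}$, and continuity reduced to the stability of the transport equation \eqref{eq:5-152} under perturbations of $DV$. The only cosmetic difference is that the paper establishes compactness of $B_{T}$ directly in the norm topology of $L^{2}\left(t,T;H_{\pi_{\gamma}}^{1}(\brn)\right)$ (rather than in a locally uniform topology) and devotes most of its effort to the continuity of $V\mapsto m^{V}_{s}$, upgrading weak convergence of $D\Psi^{h}$ to strong convergence by a weighted energy identity before applying dominated convergence.
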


\begin{proof}
We first show that $B_{T}$ is a compact subset of $L^{2}\left(t,T;H_{\pi_{\gamma}}^{1}(\brn)\right)$. It is obviously a convex closed bounded set, therefore, $B_{T}$ is weakly compact. So if $V^{h}(s,x)$ is a sequence of $B_{T}$, we can extract a subsequence, still denoted by $V^{h}$, such that there exists $V\in B_T$,
\begin{equation*}\label{eq:5-121}
    V^{h}\rightharpoonup V;\quad DV^{h}\rightharpoonup DV;\quad D^{2}V^{h}\rightharpoonup D^{2}V,\quad \text{weakly in } L^{2}(t,T;L_{\pi_{\gamma}}^{2}(\brn)),
\end{equation*}
where the last property comes from the integrability condition for $D^2V^h$ in Table~\ref{table:1}. However, from the growth estimate for $DV^h$ and the continuity of $V^h$ in time in Table~\ref{table:1}, it follows that the sequence $V^{h}$ is uniformly continuous on compact sets. And from the growth estimate for $V^h$ in Table~\ref{table:1}, we know that the sequence $V^{h}$ is also uniformly bounded on compact sets. Thus, from Arzela-Ascoli theorem, there exists a subsequence (still denoted by $V^{h}$) which uniformly converges to $V$ on any compact sets. Moreover, from the growth estimate for $V^h$ and $V$, we know that for $\gamma>\frac{n}{2}+3$,
\begin{align*}
    \int_{x\in\brn,\;|x|\geq R} \left|V^{h}(s,x)-V(s,x)\right|^{2}\pi_{\gamma}(x)dxds \le C(T)\int_{x\in\brn,\;|x|\geq R} \frac{\left(1+|x|^2\right)^2}{\left(1+|x|^2\right)^\gamma} dx,
\end{align*}
which convergence to $0$ uniformly in $h$ as $R\to+\infty$. Therefore, together with the uniformly convergence of $V^{h}$ in the set $\{x:|x|\le R\}$,  we know that $V^{h}$ converges to $V$ in norm in $L^{2}(t,T;L_{\pi_{\mu}}^{2}(\brn))$. Similarly, for $DV^h$, from the H\"older estimate and the uniformly growth condition on any compact sets (see Table~\ref{table:1}), again by the Arzela-Ascoli theorem, we know that $DV^h$ uniformly converges to $V^h$ on any compact sets. Moreover, since
\begin{align*}
    \int_{x\in\brn,\;|x|\geq R} \left|DV^{h}(s,x)-DV(s,x)\right|^{2}\pi_{\gamma}(x)dxds \le C(T)\int_{x\in\brn,\;|x|\geq R} \frac{\left(1+|x|\right)^2}{\left(1+|x|^2\right)^\gamma} dx,
\end{align*}
which convergence to $0$ uniformly in $h$ as $R\to+\infty$, we know that $DV^{h}$ also converges to $ DV(s,x)$ in norm in $L^{2}(t,T;L_{\pi_{\gamma}}^{2}(\brn))$. This completes the proof that $B_{T}$ is compact in $L^{2}(t,T;H_{\pi_{\gamma}}^{1}(\brn))$. 

We next check that a solution $V$ of the HJB equation \eqref{eq:5-150} can be considered as a fixed point for a map from $B_{T}$ to $B_{T}$. Indeed, for a function $V\in B_{T}$, we associate to it the probability measure $m^{m,t}_s$ defined by \eqref{eq:5-114}-\eqref{eq:5-115}, and we then define $V^{m,t}(s,x)$ by \eqref{eq:5-150}. Here it is convenient to keep the notation $V^{m,t}(s,x)$ to make the difference with the initial $V(s,x)$, which originated $m^{m,t}_s$. From results in Subsections~\ref{subsec:priori:V}-\ref{subsec:V_x_i} and the fact that $B_{T}$ itself do not depend on $m^{m,t}_s$, we know that the functional $V\mapsto V^{mt}$ maps clearly from $B_{T}$ to $B_{T}$. It remains to show that this map is continuous. It is composed of the map $V\mapsto m^{m,t}$ and the map $m^{m,t}\mapsto V^{m,t}$. Suppose that we have a sequence of measures $m^{h,m,t}$ such that $m^{h,m,t}_s$ converges to $ m^{m,t}_s$ in $\pr_2(\brn)$ for any $s\in[t,T]$. Note the fact that $m^{h,m,t}$ appears only in the terms $\frac{dF}{d\nu}\left(m^{h,m,t}_s\right)(x)$ and $\frac{dF_T}{d\nu}\left(m^{h,m,t}_T\right)(x)$ of the equation \eqref{eq:5-150}, then from the continuity condition for $\frac{dF}{d\nu}(m)(x)$ and $\frac{dF_T}{d\nu}(m)(x)$ in $m$ in Assumption~\ref{assumption:F}, we get easily that $V^{h,m,t}(s,x)$ converges to $V^{m,t}$ in $L^{2}\left(t,T;H_{\pi_{\gamma}}^{1}(\brn)\right)$. Then the continuity of the map $V\mapsto V^{m,t}$ from $B_{T}$ to $B_{T}$ boils down to proving that 
\begin{equation}\label{eq:5-122}
    V\mapsto m^{m,t}_s \quad \text{is continuous from }B_{T} \text{ to}\ \pr_2(\brn),\quad \forall s\in[t,T].
\end{equation}
From the metric of $\pr_2(\brn)$, we call \eqref{eq:1-100}, it suffices to prove the following: for any continuously differentiable function $\varphi$ satisfying $|\varphi(x)|\le C\left(1+|x|^{2}\right)$, consider the solution of the equation
\begin{equation}\label{eq:5-123}
\left\{
\begin{aligned}
    &-\dfrac{\partial\Psi}{\partial\tau}(\tau,x)+\mathcal{A}\Psi(\tau,x)=D\Psi(\tau,x)\cdot D_{p}H(\tau,x,DV(\tau,x)),\quad t\le\tau<s<T,\\
    &\Psi(s,x)=\varphi(x),\quad x\in\brn,
\end{aligned}
\right.
\end{equation}
then the functional 
\begin{equation}\label{eq:5-124}
    V\mapsto\int_{\brn}\Psi(t,x)dm(x)
\end{equation}
is continuous on $B_{T}$. We set $g(\tau,x):=D_{p}H(\tau,x,DV(\tau,x))$, and from Assumption~\ref{assumption:H} and the gwoth condition for $DV$ (see Table~\ref{table:1}), we know that $|g(\tau,x)|\le C(T)(1+|x|)$. Then, \eqref{eq:5-123} is an equation of the type \eqref{eq:5-101}, so from Proposition~\ref{prop5-5} and  we have the following estimates
\begin{align}
    &|\Psi(\tau,x)|\le C(s,T)(1+|x|^{2}),\quad t\le \tau\leq s, \quad x\in\brn, \label{eq:5-128}\\
    &\int_{t}^{s}\int_{\brn}|D\Psi(\tau,x)|^{2}\pi_{\gamma}(x)dxd\tau\le C(s,T), \label{eq:5-125} \\
    &\int_{t}^{s}\int_{\brn}\left|\dfrac{\partial\Psi}{\partial\tau}(\tau,x)\right|^{2}\pi_{\gamma+1}(x)dxds\le C(s,T), \label{eq:5-126}
\end{align}
with $\gamma\geq\dfrac{n}{2}+3$; and from Theorem~\ref{theo5-1}, we know that for any bounded smooth domain $\mathcal{O}\subset\brn$, there exists $0<\delta_{0}<\frac{1}{2}$ depending only on $\mathcal{\overline{O}}$, $T$ and the parameters in assumptions on the coefficients, such that
\begin{equation}\label{eq:5-127}
    \sup_{x_{1},x_{2}\in\mathcal{\overline{O}},\ s_{1},s_{2}\in[t,s]}\dfrac{|\Psi(s_{1},x_{1})-\Psi(s_{2},x_{2})|}{|x_{1}-x_{2}|^{2\delta_{9}}+|s_{1}-s_{2}|^{\delta_{0}}}\le C(s,T,\mathcal{O}).
\end{equation}
Suppose that $V^{h}$ is now a sequence in $B_{T}$ which converges to $V$ in $B_{T}$. We also set $g^{h}(\tau,x):=D_{p}H\left(\tau,x,DV^{h}(\tau,x)\right)$. Since $V^{h}\in B_{T}$, we have $\left|DV^{h}(\tau,x)\right|\le C(T)\left(1+|x|\right)$ and thus $\left|g^{h}(\tau,x)\right|\le C(T)(1+|x|)$. Noting that the constant $C(T)$ in independent of $h$, from the continuity of $D_pH$, the uniformly conditions for $V^h$ in Table~\ref{table:1}, the Arzela-Ascoli theorem and the convergence of $V^h$ in $B_T$, we know that 
\begin{align}\label{convergence:g}
    g^{h}(\tau,x)\rightarrow g(\tau,x)\ \text{pointwise up to a subsequence}.
\end{align}
Let $\Psi$ corresponds to $V$ and $\Psi^{h}(\tau,x)$ corresponding to $V^{h}$, which satisfies  the equation
\begin{equation}\label{eq:5-180}
\left\{
\begin{aligned}
    &-\dfrac{\partial\Psi^{h}}{\partial\tau}(\tau,x)+\mathcal{A}\Psi^{h}(\tau,x)=D\Psi^{h}(\tau,x)\cdot g^{h}(\tau,x),\quad t\le\tau<s<T,\\
    &\Psi^{h}(s,x)=\varphi(x),\quad x\in\brn.
\end{aligned}
\right.
\end{equation}
Again from Proposition~\ref{prop:5-5} and Theorem~\ref{theo5-1}, we know that the estimates \eqref{eq:5-128}, \eqref{eq:5-125}, \eqref{eq:5-126} and \eqref{eq:5-127} hold for $\Psi^{h}$ with the constants independent of $h$. Therefore, from the Arzela-Ascoli theorem, we know that
\begin{align}\label{eq:5-180'}
    \Psi^{h}(\tau,x)&\rightarrow\Psi(\tau,x)\ \text{pointwise}.
\end{align}
Recall that $B_T$ is a compact subset of $L^{2}\left(t,T;H_{\pi_{\gamma}}^{1}(\brn)\right)$, so up to a subsequence, we have 
\begin{align*}
    D\Psi^{h}&\rightharpoonup D\Psi\ \text{weakly in }\ L^{2}\left(t,s;L_{\pi_{\gamma}}^{2}(\brn)\right).
\end{align*}
From \eqref{eq:5-123} and \eqref{eq:5-180}, we know that
\begin{align*}
    &\dfrac{1}{2}\int_{\brn}\left|\left(\Psi^{h}-\Psi\right)(t,x)\right|^{2}\pi_{\gamma}(x)dx +\dfrac{1}{2}\sum_{i,j}^n a_{ij}\int_{t}^{s}\int_{\brn}\dfrac{\partial \left(\Psi^{h}-\Psi\right)}{\partial x_{j}}(\tau,x)\dfrac{\partial\left(\Psi^{h}-\Psi\right)}{\partial x_{i}}(\tau,x)\pi_{\gamma}(x)dxd\tau \notag \\
    =\ & \gamma\sum_{i,j}^n a_{ij}\int_{t}^{s}\int_{\brn}\dfrac{\partial\left(\Psi^{h}-\Psi\right)}{\partial x_{j}}(\tau,x)\left(\Psi^{h}-\Psi\right)(\tau,x)\dfrac{x_{i}}{1+|x|^{2}}\pi_{\gamma}(x)dxd\tau \notag \\
    &+\int_{t}^{s}\int_{\brn} \left(\Psi^{h}-\Psi\right)(\tau,x)\cdot \left[D\Psi^{h}(\tau,x) g^{h}(\tau,x) -D\Psi(\tau,x) g(\tau,x)\right] \pi_{\gamma}(x) dxds, 
\end{align*}
and from Assumption~\ref{eq:5-8} and the Cauchy–Schwarz inequality, we know that
\begin{align}
    &\int_{t}^{s}\int_{\brn} \left|D\Psi^{h}(\tau,x)-D\Psi(\tau,x)\right|^2 \pi_{\gamma}(x)dxd\tau \notag \\
    \le\ & C \bigg[\int_{\brn}\left|\left(\Psi^{h}-\Psi\right)(t,x)\right|^{2}\pi_{\gamma}(x)dx \notag \\
    &\quad + \sqrt{\int_{t}^{s}\int_{\brn} \left|D\Psi^{h} (\tau,x) - D \Psi (\tau,x)\right|^2 \pi_{\gamma}(x) dxd\tau} \notag \\
    &\quad\qquad \cdot \sqrt{\int_{t}^{s}\int_{\brn} \left|\Psi^{h}(\tau,x)-\Psi(\tau,x)\right|^2\dfrac{|x|^2}{(1+|x|^{2})^2}\pi_{\gamma}(x)dxd\tau} \notag \\
    &\quad +\sqrt{\int_{t}^{s}\int_{\brn} \left|D\Psi^{h}(\tau,x) g^{h}(\tau,x) -D\Psi(\tau,x)g(\tau,x)\right|^2 \frac{\pi_{\gamma}(x)}{1+|x|^2} dxds} \notag \\
    &\qquad \cdot \sqrt{\int_{t}^{s}\int_{\brn} \left|\Psi^{h}(\tau,x)-\Psi(\tau,x)\right|^2\left(1+|x|^2\right) \pi_{\gamma}(x) dxds} \bigg]. \label{eq:5-180''}
\end{align}
From the pointwise convergence of $\Psi^h$ in \eqref{eq:5-180} and the boundedness condition \eqref{eq:5-126}, 
we know that the right hand side of \eqref{eq:5-180''} convergence to $0$ as $h$ goes to $+\infty$, therefore, we have
\begin{equation*}\label{eq:5-171}
    D\Psi^{h}\rightarrow D\Psi\ \text{ in }\ L^{2}\left(t,s;L_{\pi_{\gamma}}^{2}(\brn)\right).
\end{equation*}
Then, from equations \eqref{eq:5-123} and \eqref{eq:5-180}, the pointwise convergence and uniform growth estimate for $g^h$ in \eqref{convergence:g} and the growth estimates \eqref{eq:5-128}, \eqref{eq:5-125}, \eqref{eq:5-126} and \eqref{eq:5-127} for $\Psi^{h}$, we know that 
\begin{align*}
    \dfrac{\partial\Psi^{h}}{\partial\tau}&\rightharpoonup\dfrac{\partial\Psi}{\partial\tau}\ \text{weakly in }\ L^{2}\left(t,s;L_{\pi_{\gamma+1}}^{2}(\brn)\right). 
\end{align*}
That means we can now pass to the limit and $\Psi$ is the solution of \eqref{eq:5-123} which corresponds to the limit $V$ of $V^{h}$. Finally, from the uniform estimate \eqref{eq:5-128} for $\Psi^{h}$, we can assert, by using Lebesgue's dominated convergence theorem, recalling that $m\in\pr_2(\brn)$,
\begin{equation*}\label{eq:5-172}
    \int_{\brn}\Psi^{h}(t,x)dm(x)\rightarrow\int_{\brn}\Psi(t,x)dm(x),
\end{equation*}
which means, from Lemma~\ref{lem4-1},
\begin{align*}
    \int_{\brn}\varphi(x)dm^{h,m,t}_s\rightarrow\int_{\brn}\varphi(x)dm^{m,t}_s,
\end{align*}
and these altogether show that the map \eqref{eq:5-124} is continuous. Then, from \eqref{eq:1-100}, we know that the property \eqref{eq:5-122} holds true, which completes the proof of this main result. 
\end{proof}

\subsection{About Global-in-time Solvability of HJB equation}\label{subsec:global}

The last subsection gives a local-in-time solvability for $V$ when the time duration $T-t$ is not large. In this subsection, we aim to obtain a global-in-time solution. From Theorem \ref{theo5-2}, the restrictions \eqref{eq:5-14} and \eqref{eq:5-206} on $T-t$ are required for \textit{a priori} estimates for the the growth conditions for $|V|$ and $|DV|$ in Propositions \ref{prop5-1} and \ref{prop5-3}, respectively; so far, we did not impose any convexity assumptions on the cost coefficient functions. Indeed, if there is certain convexity on cost functions, we are capable of establishing the global-in-time solvability as as follows; by then, we do not require the conditions \eqref{eq:5-14} and \eqref{eq:5-206} on $T-t$ anymore.

\begin{assumption}[Convexity]\label{assumption:convex}
    (i) The running cost function $l$ satisfies Assumption~\ref{assumption:l}; and it is convex in $x$ and is strongly convex in $v$, such that there exists $\lambda>0$, for any $s\in[0,T]$ and $x,x',v,v'\in\brn$,
    \begin{align}\label{condition:lambda}
        l(s,x',v')-l(s,x,v)\geq D_xl(s,x,v)\cdot (x'-x)+D_vl(s,x,v)\cdot (v'-v)+\lambda|v'-v|^2. 
    \end{align}
    (ii) The terminal cost function $h$ is convex in $x$.\\
    (iii) For the other running cost functions $F$ and $F_T$, their linear functional derivatives $\frac{dF}{d\nu}(m)(x)$ and $\frac{dF_T}{d\nu}(m)(x)$ are convex in $x$.
\end{assumption}

\begin{remark}
    The convexity condition (iii) in Assumption~\ref{assumption:convex} can be viewed as part of the monotonicity conditions for the Bellman equation or master equation; and we refer to our another work \cite{AB12} for more detailed discussion on various monotonicity conditions for MFTC problems and MFGs.
\end{remark}

We shall first give the global-in-time \textit{a priori} estimate for the growth of $|DV|$, and then that for $|V|$, which altogether show that the small-time conditions \eqref{eq:5-14} and \eqref{eq:5-206} are not necessary in the presence of Assumption~\ref{assumption:convex}.

\subsubsection{\textit{A Priori} Estimate for the growth condition for $|DV|$}

Instead of the using approaches in Subsection \ref{subsec:DV} to consider the equation for $DV$ directly, we here use the derivative representation $P^{m,t}_s(x)=DV\left(s,Y^{m,t}_s(x)\right)$ in \eqref{P_DV'} in order to give the growth condition for $|DV|$. Recall that $Y^{m,t}_s(x)$ and $P^{m,t}_s(x)$ satisfy the following forward-backward system of stochastic differential equations arising from the maximum principle \eqref{maximum_principle}:
\begin{equation}\label{global:DV:1}
    \left\{
    \begin{aligned}
        &Y^{m,t}_s(x)=x+\int_t^s D_{p}H\left(\tau,Y^{m,t}_\tau(x),P^{m,t}_\tau(x)\right)d\tau+\sum_{j=1}^{n}\sigma^{j} \left(w^{j}_s-w^{j}_t\right),\\
        &P^{m,t}_s(x)= Dh\left(Y^{m,t}_T(x)\right)+D\dfrac{dF_{T}}{d\nu}\left(m_T\right)\left(Y^{m,t}_T(x)\right) - \sum_{j=1}^{n}\int_s^T Q^{m,t}_\tau(x) dw_\tau\\
        &\quad\qquad\qquad +\int_s^T \left[D_{x}H\left(\tau,Y^{m,t}_\tau(x),P^{m,t}_\tau(x)\right)+D\dfrac{dF}{d\nu}\left(m_\tau\right)\left(Y^{m,t}_\tau(x)\right)\right] d\tau.
    \end{aligned}
    \right.
\end{equation}
The well-posedness of the forward-backward systems of equations \eqref{global:DV:1} associated with this mean field type control problems has been studied in our previous works \cite{AB10,AB5}; or otherwise, it can also be obtained by the parallel arguments via PDE methods such as those found in \cite{Delarue2002,Ma-Yong}. Indeed, the convexity conditions in Assumption~\ref{assumption:convex} are crucial in proving the unique existence of the solution $\left(Y^{m,t}_s(x),P^{m,t}_s(x),Q^{m,t}_s(x)\right)$ and their $L^2$ growth estimate in $x$. Under the convexity assumption~\ref{assumption:convex}, and the regularity assumptions on the coefficients in the last subsections, FBSDEs \eqref{global:DV:1} has a unique adapted solution $\left(Y^{m,t}_s(x),P^{m,t}_s(x),Q^{m,t}_s(x)\right)$, and they satisfy
\begin{equation*}\label{estimate:YPQ}
    \sup_{t\le s\le T}\e\left[\left| { Y^{m,t}_s(x)}\right|^2 \right] + \sup_{t\le s\le T}\e\left[\left| { P^{m,t}_s(x)}\right|^2\right]+\e\left[\int_t^T \left| { Q^{m,t}_s(x)}\right|^2 ds \right] \le C(T,\lambda)\left(1+|x|^2\right);
\end{equation*}
for instance, we refer to \cite[Section 8]{AB10} for a proof. Then, together with the fact $DV\left(s,Y^{m,t}_s(x)\right)=P^{m,t}_s(x)$, we have the following \textit{a priori} estimate for the growth of $|DV|$ under Assumption~\ref{assumption:convex}, which does not require the time period $T-t$ to be small in a sense that relieves Condition \eqref{eq:5-206}. 

\begin{lemma}\label{lem:global:1}
Under Assumptions~\ref{eq:5-8}, \ref{assumption:h}, \ref{assumption:F} and \ref{assumption:convex}, we have
\begin{align}\label{lem:global:1-0}
    |DV(s,x)|\le C(\lambda,T) (1+|x|),\quad (s,x)\in[t,T]\times\brn,
\end{align}
where $C(\lambda,T)$ is a constant depending only on $T$, the constant $\lambda$ in the condition \eqref{condition:lambda}, and parameters in Assumptions~\ref{eq:5-8}, \ref{assumption:h}, \ref{assumption:F} and \ref{assumption:H}.
\end{lemma}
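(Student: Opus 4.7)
The plan is to combine the probabilistic representation $DV^{m,t}(s, Y^{m,t}_s(x)) = P^{m,t}_s(x)$ from \eqref{P_DV'}, the $L^{2}$ well-posedness of the FBSDE system \eqref{global:DV:1} under the convexity Assumption~\ref{assumption:convex}, and a flow/semigroup property for the coupled HJB--measure-flow system that allows reducing the claim at a general $(s,x)$ to an evaluation at the initial time of a suitably shifted problem. The point is that at the initial time, $P$ is deterministic, so the FBSDE $L^{2}$ estimate transfers at once to a pointwise estimate of $DV$.

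First, I would invoke the well-posedness and the $L^{2}$ estimate for \eqref{global:DV:1} recalled just before the lemma: under Assumption~\ref{assumption:convex}, the adapted solution $(Y^{m,t},P^{m,t},Q^{m,t})$ satisfies
\[
\sup_{s\in[t,T]}\e\!\left[\bigl|P^{m,t}_s(x)\bigr|^{2}\right] \le C(T,\lambda)\bigl(1+|x|^{2}\bigr).
\]
Specializing to $s=t$, we have $Y^{m,t}_t(x)=x$ and $P^{m,t}_t(x)$ is $\mathcal{W}_t^{t}$-measurable, hence deterministic. Combining this with the identity $DV^{m,t}(t,x)=P^{m,t}_t(x)$ (which is \eqref{P_DV} at $s=t$) upgrades the $L^{2}$ bound to the pointwise estimate $|DV^{m,t}(t,x)|\le C(T,\lambda)^{1/2}(1+|x|)$.

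To cover an arbitrary $s\in(t,T]$, I would invoke the flow property of the coupled HJB--measure-flow system \eqref{eq:5-150}--\eqref{eq:5-152}: by uniqueness of the solution, the restrictions of $V^{m,t}$ and of $m^{m,t}_{\cdot}$ to $[s,T]$ coincide with the solution associated with the initial data $(s,m^{m,t}_s)$, i.e. $V^{m,t}(s',x)=V^{m^{m,t}_s,s}(s',x)$ on $[s,T]\times\brn$. Applying the $s=t$ argument to the shifted problem based at time $s$ with initial measure $m^{m,t}_s$ yields
\[
|DV^{m,t}(s,x)| \;=\; \bigl|DV^{m^{m,t}_s,s}(s,x)\bigr| \;=\; \bigl|P^{m^{m,t}_s,s}_s(x)\bigr| \;\le\; C(T-s,\lambda)^{1/2}(1+|x|) \;\le\; C(T,\lambda)(1+|x|),
\]
where the last inequality uses the monotonicity of the FBSDE constant in the horizon. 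This gives \eqref{lem:global:1-0} with no restriction on $T-t$, precisely as the statement claims.

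The main obstacle will be a clean justification of the flow identity $V^{m,t}(s',\cdot)=V^{m^{m,t}_s,s}(s',\cdot)$ on $[s,T]$. This reduces to two points: (i) the semigroup property of the backward Feynman--Kac representation \eqref{eq:4-9}--\eqref{eq:4-10} for the measure flow, which ensures that $m^{m,t}_\cdot|_{[s,T]}=m^{m^{m,t}_s,s}_\cdot$; and (ii) the translation invariance of the HJB equation \eqref{eq:5-150} in the initial data, so that the restriction $V^{m,t}|_{[s,T]}$ solves the HJB problem driven by $(s,m^{m,t}_s)$. Together with an appeal to uniqueness in a suitable solution class (which can be read off from the a priori bounds already established in Subsections~\ref{subsec:priori:V}--\ref{subsec:V_x_i}), these close the argument. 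A secondary routine task is citing or re-deriving the FBSDE estimate, for which the paper already points to \cite[Section 8]{AB10}; the key structural inputs there are the $\lambda$-strong convexity of $l$ in $v$ in \eqref{condition:lambda} and the convexity of $h$, $\frac{dF}{d\nu}$ and $\frac{dF_T}{d\nu}$ in $x$.
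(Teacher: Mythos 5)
Your proposal is correct and follows essentially the same route as the paper: the paper likewise derives the bound from the FBSDE $L^{2}$ estimate under Assumption~\ref{assumption:convex} combined with the identification $P^{m,t}_s(x)=DV\left(s,Y^{m,t}_s(x)\right)$. The only difference is that you spell out the step the paper leaves implicit --- specializing to $s=t$ where $P^{m,t}_t(x)$ is deterministic and then invoking the flow property $V^{m,t}|_{[s,T]}=V^{m^{m,t}_s,s}$ (which the paper establishes later as \eqref{flow:0}) to cover general $s$ --- and this is exactly the right way to close that gap.
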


This result can also be obtained by the PDE arguments in \cite{Delarue2002,Ma-Yong}. Besides, we here also provide an alternative perspective on the statement $DV\left(s,Y^{m,t}_s(x)\right)=P^{m,t}_s(x)$ by using the HJB equation of $V$ directly. We define the process 
\begin{align*}
    \widetilde{P}^{m,t}_s(x):=DV\left(s,Y^{m,t}_s(x)\right),\ (s,x)\in[t,T]\times\brn,
\end{align*}
and it follows from the terminal condition of $V$ that
\begin{align}\label{BSDE:PQ-1}
    \widetilde{P}^{m,t}_T(x)= Dh\left(Y^{m,t}_T(x)\right) + D\frac{dF_T}{d\nu}\left(m_T\right)\left(Y^{m,t}_T(x)\right)=P^{m,t}_T(x).
\end{align}
Then, by applying It\^o's formula on $\widetilde{P}^{m,t}_s(x)$ and using the HJB equation \eqref{eq:5-150}, similar as in \eqref{BSDE:P'}, we have
\begin{align}
    d\widetilde{P}^{m,t}_s(x)=\ &  \sum_{j=1}^{n}\widetilde{Q}^{m,t,j}_s(x) dw^j_s -\left[D_{x}H\left(s,Y^{m,t}_s(x),\widetilde{P}^{m,t}_s(x)\right)+D\dfrac{dF}{d\nu}\left(m_s\right)\left(Y^{m,t}_s(x)\right)\right] ds \notag \\
    & + D^2V\left(s,Y^{m,t}_s(x)\right) \left[D_{p}H\left(s,Y^{m,t}_s(x),P^{m,t}_s(x)\right)-D_{p}H\left(s,Y^{m,t}_s(x),\widetilde{P}^{m,t}_s(x)\right) \right] ds, \label{BSDE:PQ-2}
\end{align}
where
\begin{align*}
    \widetilde{Q}^{m,t,j}_s(x):=D^2V\left(s,Y^{m,t}_s(x)\right)\sigma^j,\quad j=1,2,\dots,n.
\end{align*}
Since $Y^{m,t}_s(x)$ and $P^{m,t}_s(x)$ are already known, \eqref{BSDE:PQ-1} and \eqref{BSDE:PQ-2} form a BSDE for the couple $\left(\widetilde{P}^{m,t}_s(x),\widetilde{Q}^{m,t}_s(x)\right)$, and it is easy to see that $\left(P^{m,t}_s(x),Q^{m,t}_s(x)\right)$ is also a solution to this BSDE. Then, from the uniqueness result for the BSDE, we know that $\widetilde{P}^{m,t}_s(x)=P^{m,t}_s(x)$, which gives that $DV\left(s,Y^{m,t}_s(x)\right)=P^{m,t}_s(x)$.

\subsubsection{\textit{A Priori} Estimate for the growth condition for $|V|$}\label{subsubsec:global:|V|}

Under Assumptions~\ref{eq:5-8}, \ref{assumption:h}, \ref{assumption:F}, \ref{assumption:H} and \ref{assumption:convex}, from \eqref{lem:global:1-0}, we have
\begin{align*}
    |H(s,x,DV(s,x))|\le c(1+|x|^2)+\frac{\delta}{2} |DV(s,x)|^2 \le C(\lambda,T)(1+|x|^2).
\end{align*}
Then, as in \eqref{eq:5-10}, from the HJB equation \eqref{eq:5-150}, we know that
\begin{align*}
    &\left|-\dfrac{\partial V}{\partial s}(s,x)+\mathcal{A}V(s,x)\right| \le C(\lambda,T)(1+|x|^{2}).
\end{align*}
Similar as \eqref{eq:5-11}, this lead to the following majorant equation 
\begin{align*}
    -\dfrac{\partial}{\partial s}z(s,x)+\mathcal{A}z(s,x)&= C(\lambda,T)\left(1+|x|^{2}\right),\\
    z(T,x)&=2c_{T}(1+|x|^{2}).
\end{align*}
Again, as in \eqref{eq:5-12}, we search for the solution of the form 
\begin{align}\label{lem:global:2-1}
    z(s,x)=\beta(s)\dfrac{|x|^{2}}{2}+\mu(s),\quad s\in[t,T], 
\end{align}
with some $\beta(s),\mu(s)$ being bounded on $(t,T)$. It is easy to see that $\beta(s)$ satisfies
\begin{align}\label{lem:global:2-2}
    \beta(s)=4c_T + 2C(\lambda,T)(T-s),\quad s\in[t,T], 
\end{align}
which is bounded on $[t,T]$; and then, we also have
\begin{equation}\label{lem:global:2-3}
    \mu(s)=2c_{T}+\int_{s}^{T}\left\{C(\lambda,T)+[2c_T + C(\lambda,T)(T-\tau)]\text{tr}(a)\right\}d\tau,\quad s\in[t,T], 
\end{equation}
which is also bounded on $[t,T]$. Then, following the method in the proof of Proposition~\ref{prop5-1}, we have the following result, which does not require the time period $T-t$ to be small, and thus avoids from using Condition \eqref{eq:5-14}. The proof is exactly same as that for  Proposition~\ref{prop5-1}, and is omitted here.

\begin{lemma}\label{lem:global:2}
Under Assumptions~\ref{eq:5-8}, \ref{assumption:h}, \ref{assumption:F} and \ref{assumption:convex}, we define $z(s,x)$ as in \eqref{lem:global:2-1}, \eqref{lem:global:2-2} and \eqref{lem:global:2-3}. Then, we have the following estimate 
\begin{equation*}\label{lem:global:2-0}
    -z(s,x)\le V(s,x)\leq z(s,x),\quad (s,x)\in[t,T]\times\brn.
\end{equation*}
\end{lemma}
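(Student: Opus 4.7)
My plan is to follow the majorant-plus-maximum-principle strategy of Proposition~\ref{prop5-1}, but exploit the global gradient estimate from Lemma~\ref{lem:global:1} to linearize the resulting comparison problem and thereby circumvent the Riccati obstruction that forced the smallness condition \eqref{eq:5-14}.

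First, I combine Lemma~\ref{lem:global:1} with Assumption~\ref{assumption:H}-(i) to bound the Hamiltonian:
\[
|H(s,x,DV(s,x))| \le c(1+|x|^2)+\frac{\delta}{2}|DV(s,x)|^2 \le C(\lambda,T)(1+|x|^2).
\]
Together with $\big|\frac{dF}{d\nu}(m_s)(x)\big|\le c(1+|x|^2)$ from Assumption~\ref{assumption:F}-(i), the HJB equation \eqref{eq:5-150} yields
\[
\left|-\frac{\partial V}{\partial s}(s,x)+\mathcal{A}V(s,x)\right| \le C(\lambda,T)(1+|x|^2),\qquad |V(T,x)|\le 2c_T(1+|x|^2).
\]
The crucial point is that, thanks to the a priori gradient control, the right-hand side is now \emph{linear} in $1+|x|^2$ with no residual dependence on $DV$.

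Next, I verify by direct substitution that the explicit majorant $z(s,x)=\beta(s)|x|^2/2+\mu(s)$ from \eqref{lem:global:2-1}--\eqref{lem:global:2-3} solves
\[
-\frac{\partial z}{\partial s}+\mathcal{A}z=C(\lambda,T)(1+|x|^2),\qquad z(T,x)=2c_T(1+|x|^2),
\]
and $\beta,\mu$ are bounded (in fact affine) on $[t,T]$ for \emph{any} $T-t$, so no smallness restriction is needed. Setting $\zeta^-:=V-z$, the two relations give
\[
-\frac{\partial \zeta^-}{\partial s}+\mathcal{A}\zeta^- \le 0,\qquad \zeta^-(T,x)\le 0,
\]
and I impose the natural condition at infinity $\limsup_{|x|\to\infty}\zeta^-(s,x)\le 0$ as in \eqref{eq:5-30} (consistent with the quadratic growth of both $V$ and $z$). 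The classical parabolic maximum principle then rules out any strictly positive value: at an interior positive maximum $(s^*,x^*)$ with $s^*<T$ one would have $-\partial_s\zeta^-(s^*,x^*)\ge 0$, $\mathcal{A}\zeta^-(s^*,x^*)>0$ by Assumption~\ref{eq:5-8}, contradicting the inequality above. Hence $V\le z$. The symmetric argument applied to $\zeta^+:=V+z$ (using the lower bound $-\partial_s V+\mathcal{A}V\ge -C(\lambda,T)(1+|x|^2)$) delivers $V\ge -z$.

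The only conceptual point worth isolating --- and the reason this lemma becomes global in time --- is the contrast with Proposition~\ref{prop5-1}: there, the majorant had to absorb the quadratic gradient term $\frac{\delta}{2}|Dz|^2$, which produced the Riccati equation whose solution explodes unless $T-t$ satisfies \eqref{eq:5-14}. Here, Lemma~\ref{lem:global:1} has already converted that quadratic term into a harmless linear-in-$|x|^2$ source, so the ODE for $\beta$ degenerates from a Riccati into a linear equation with global-in-time bounded solution. This decoupling is the main (and essentially the only) step requiring the convexity Assumption~\ref{assumption:convex}; the remainder of the argument is mechanically identical to that of Proposition~\ref{prop5-1}.
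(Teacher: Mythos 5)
Your proposal is correct and follows exactly the route the paper takes (and explicitly describes just before the lemma): use the global gradient bound of Lemma~\ref{lem:global:1} to reduce the majorant equation to a linear one with affine $\beta,\mu$, then run the maximum-principle comparison of Proposition~\ref{prop5-1} verbatim, which is precisely why the paper omits the proof. Your added remark pinpointing that the convexity assumption enters only through the linearization of the Riccati equation is an accurate reading of the paper's logic.
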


Now, we have the following global-in-time well-posedness result.

\begin{theorem}\label{theo5-3} 
Under Assumptions~\ref{eq:5-8}, \ref{assumption:h}, \ref{assumption:F} and \ref{assumption:convex}, there exists a solution $V$ of the HJB equation \eqref{eq:5-150} in the Hilbert space $L^{2}\left(t,T;H_{\pi_{\gamma}}^{1}(\brn)\right)$ satisfying the regularity conditions in Table~\ref{table:1}. 
\end{theorem}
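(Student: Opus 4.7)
The approach mirrors the proof of Theorem~\ref{theo5-2}, but replaces the small-time \textit{a priori} growth estimates for $|V|$ and $|DV|$ from Propositions~\ref{prop5-1} and \ref{prop5-3} with the global-in-time estimates provided by Lemmas~\ref{lem:global:2} and \ref{lem:global:1}. This precisely removes the dependence on the smallness restrictions \eqref{eq:5-14} and \eqref{eq:5-206}. The idea is that, once those two pointwise growth bounds are available \emph{regardless} of the length of $[t,T]$, every other entry of Table~\ref{table:1} follows from them by exactly the same arguments as in Subsections~\ref{subsec:priori:V}--\ref{subsec:V_x_i}.

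First, I would redefine the subset $B_T\subset L^{2}(t,T;H_{\pi_{\gamma}}^{1}(\brn))$ to consist of functions $V$ satisfying $|V(s,x)|\le C(\lambda,T)(1+|x|^2)$ and $|DV(s,x)|\le C(\lambda,T)(1+|x|)$ together with the time-continuity, $L^2_{\pi_\gamma}$-integrability, and local H\"older conditions listed in Table~\ref{table:1}, now with constants permitted to depend on $T$ and on the strong convexity modulus $\lambda$ of Assumption~\ref{assumption:convex}. Next, I would verify that the derivations in Propositions~\ref{prop5-4}, \ref{prop:5-5}, \ref{prop5-5} and Theorem~\ref{theo5-1} remain valid as they use only the two pointwise growth bounds and the structural conditions on $H$, $F$, $F_T$, $h$ imposed by Assumptions~\ref{eq:5-8}, \ref{assumption:h}, \ref{assumption:F}, \ref{assumption:H}; they do not invoke \eqref{eq:5-14} or \eqref{eq:5-206}. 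Consequently, the full list of bounds in Table~\ref{table:1} holds on the entire interval $[t,T]$ whenever $V$ solves \eqref{eq:5-150}.

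Third, given any $V\in B_T$, I would associate $m^{m,t}_s$ through \eqref{eq:5-114}--\eqref{eq:5-115} and define $V^{m,t}$ to be the solution of \eqref{eq:5-150} with this measure flow. The above observation ensures $V^{m,t}\in B_T$. Compactness of $B_T$ in $L^{2}(t,T;H_{\pi_{\gamma}}^{1}(\brn))$ (via weak compactness plus Arzel\`a--Ascoli, exactly as in Theorem~\ref{theo5-2}) and continuity of the map $V\mapsto V^{m,t}$ (via the pointwise convergence of $g^h=D_p H(\cdot,\cdot,DV^h)$ and the $L^2_{\pi_\gamma}$-convergence argument for the auxiliary equation \eqref{eq:5-115} used in Theorem~\ref{theo5-2}) then carry over verbatim, and a fixed point is produced by Schauder's theorem.

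The main obstacle is the \textit{a priori} estimate $|DV|\le C(\lambda,T)(1+|x|)$ underpinning Lemma~\ref{lem:global:1}. Unlike the small-time regime, where one could extract such a bound via the method of majorants applied directly to the equation for $\varpi=\tfrac12|DV|^2$, here one must identify $DV(s,Y^{m,t}_s(x))=P^{m,t}_s(x)$ from the Pontryagin system \eqref{global:DV:1} and invoke the FBSDE well-posedness and $L^2$ growth estimate from \cite{AB10,AB5}. This identification, and the accompanying $L^2$ bound uniform in the generating measure flow $m^{m,t}$, is where the full strength of Assumption~\ref{assumption:convex}---strong convexity of $l$ in $v$, convexity of $h$, and convexity in $x$ of $\tfrac{dF}{d\nu}(m)$ and $\tfrac{dF_T}{d\nu}(m)$---is genuinely needed, and it is what decouples the argument from any time-horizon smallness.
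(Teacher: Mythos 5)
Your proposal is correct and follows essentially the same route as the paper: the paper's own proof simply observes that Assumption~\ref{assumption:convex} subsumes Assumption~\ref{assumption:l} (hence yields Assumption~\ref{assumption:H}-(ii)) and then invokes Lemmas~\ref{lem:global:1} and \ref{lem:global:2} as drop-in replacements for the small-time estimates of Propositions~\ref{prop5-1} and \ref{prop5-3}, after which the Schauder fixed-point argument of Theorem~\ref{theo5-2} applies verbatim. You spell out the same steps in more detail, correctly identifying the FBSDE-based identification $DV(s,Y^{m,t}_s(x))=P^{m,t}_s(x)$ as the point where convexity removes the time-horizon restriction.
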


\begin{proof}
Note that Assumption~\ref{assumption:convex} include Assumption~\ref{assumption:l}, and therefore it can imply Assumption~\ref{assumption:H}-(ii). Then, this result is a consequence of the above arguments in this subsection and Theorem~\ref{theo5-2}.
\end{proof}

Theorems~\ref{theo5-2} and \ref{theo5-3} respectively give the local-in-time and global-in-time solvability of the HJB equation \eqref{eq:5-150}, which are based on the usage of the well-known Schauder fixed point theorem. For the uniqueness result, we refer to the method in our previous work \cite{AB6,AB10,AB5} for the uniqueness results for equations arsing in the mean field theory; and this result is not limited within the probabilistic methods. Similar approach can be used for the HJB equation in this article under our settings via our analytical perspective, and we shall not repeat further here.

\section{From HJB Equation to Mean Field Type Control}\label{sec:m}

In this section, we use the well-posedness result of the HJB equation \eqref{eq:5-150} proven in the last section to further give the characterization of the measure flow $m^{m,t}$, and also recover the master equation and the Bellman equation for the  mean field type control problem, and finally give the optimal control. In this section, we always make assumptions in Theorem~\ref{theo5-2} or in Theorem~\ref{theo5-3}, to guarantee both the local-in-time and the global-in-time well-posedness of $V^{m,t}$.

\subsection{Characterization of the Probability $m^{m,t}_s$}\label{subsec:m}

We first define the process 
\begin{equation*}\label{eq:5-61}
\begin{aligned}
    M^{x,m,t}_s=\exp\bigg(&\int_{t}^{s}\sigma^{-1}\; D_p H\left(\tau, X^{t}_\tau(x),DV^{m,t}\left(\tau, X^{t}_\tau(x) \right)\right)\cdot dw(\tau)\\
    &-\dfrac{1}{2}\int_{t}^{s}\left|\sigma^{-1}\;D_p H\left(\tau,X^{t}_\tau(x),DV^{m,t}\left(\tau, X^{t}_\tau(x) \right)\right)\right|^{2}d\tau\bigg)
\end{aligned}
\end{equation*}
where 
\begin{align}\label{def:SDE}
    X^{t}_\tau(x)=x+\sum_{j=1}^n \sigma^{j}\left(w^{j}_\tau-w^{j}_t\right),\quad \tau\in[t,T].
\end{align}
To shorten notation, we write 
\begin{equation*}\label{eq:5-62}
    g^{m,t}(s,x)=\sigma^{-1}\;D_p H\left(s,x,DV^{m,t}(s,x)\right),\quad (s,x)\in[t,T]\times\brn. 
\end{equation*}
From Assumption~\ref{assumption:H}-(ii) and Theorem~\ref{theo5-2}, we can assert that there exits $\overline{g}>0$ such that
\begin{equation}\label{eq:5-63}
    \left|g^{m,t}(s,x)\right|\leq\overline{g}(1+|x|),\quad (s,x)\in[t,T]\times\brn.
\end{equation}
We begin by giving the following results.

\begin{lemma}
\label{lem5-1}The process $M^{x,m,t}_s$ is a $\mathcal{W}_{t}$-martingale, and it satisfies
\begin{equation}\label{eq:5-64}
    \e \left[M^{x,m,t}_s\right]=1. 
\end{equation}
\end{lemma}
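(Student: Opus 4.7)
The plan is to recognise $M^{x,m,t}_s$ as the Dol\'eans--Dade exponential of the continuous local martingale
\[
N_s:=\int_t^s g^{m,t}\!\left(\tau,X^t_\tau(x)\right)\cdot dw_\tau,\qquad s\in[t,T],
\]
so that $M^{x,m,t}=\mathcal{E}(N)$ is automatically a non-negative local martingale and, in particular, a supermartingale with $\e[M^{x,m,t}_s]\le 1$. The substance of the lemma is to upgrade this to a true martingale with mean exactly $1$, which I would do by verifying Novikov's condition on sub-intervals and chaining the resulting conditional martingale identities via the tower property.

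First I would exploit the linear growth bound \eqref{eq:5-63} on $g^{m,t}$, which by Theorem~\ref{theo5-2} (or \ref{theo5-3}) is available uniformly in $(s,x)\in[t,T]\times\brn$, and the explicit representation \eqref{def:SDE} of $X^{t}_\tau(x)$ as $x$ plus a Gaussian increment. This yields
\[
\tfrac12\bigl|g^{m,t}(\tau,X^t_\tau(x))\bigr|^{2}\le \overline{g}^{2}\bigl(1+|x|^{2}+|\sigma(w_\tau-w_t)|^{2}\bigr).
\]
On a sub-interval $[s_0,s_1]\subset[t,T]$ I would decompose $w_\tau-w_t=(w_{s_0}-w_t)+(w_\tau-w_{s_0})$ and condition on $\mathcal{W}^{s_0}_t$. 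Since $|w_{s_0}-w_t|$ is Gaussian and the remaining functional $\int_{s_0}^{s_1}|w_\tau-w_{s_0}|^{2}d\tau$ is a quadratic functional of a Brownian path with exponential moments of arbitrary order provided $s_1-s_0$ is sufficiently small (this is a standard Karhunen--Lo\`eve / Fernique-type computation), Novikov's condition
\[
\e\!\left[\exp\!\left(\tfrac12\int_{s_0}^{s_1}\bigl|g^{m,t}(\tau,X^t_\tau(x))\bigr|^{2}d\tau\right)\right]<+\infty
\]
holds for any sub-interval of length below some explicit threshold $\delta>0$ depending only on $\overline g$, $|\sigma|$ and $|x|$.

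I would then partition $[t,T]$ as $t=t_0<t_1<\cdots<t_N=T$ with $t_{k+1}-t_k<\delta$. On each $[t_k,t_{k+1}]$, Novikov's condition applied to the shifted exponential $M^{x,m,t}_{s}/M^{x,m,t}_{t_k}$ produces a genuine $\mathcal{W}_{t_k}$-conditional martingale of mean $1$. Iterating the tower property through the successive intervals yields $\e[M^{x,m,t}_s\mid\mathcal{W}_t^r]=M^{x,m,t}_r$ for $t\le r\le s\le T$, hence \eqref{eq:5-64} and the $\mathcal{W}_t$-martingale property simultaneously.

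The main obstacle is the verification of Novikov's condition, because the quadratic Brownian functional $\int|\,w_\tau-w_t|^{2}d\tau$ does \emph{not} enjoy exponential integrability on intervals of arbitrary length. The sub-interval/chaining device above is precisely what sidesteps this difficulty, and it is crucial that the growth constant $\overline{g}$ from \eqref{eq:5-63} is independent of the partition chosen, so $\delta$ can indeed be picked uniformly once $(T,|x|,\overline g,|\sigma|)$ are fixed.
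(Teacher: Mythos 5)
Your argument is correct, but it follows a genuinely different route from the paper. You prove the lemma via the classical Bene\v{s}-type device: identify $M^{x,m,t}$ as the Dol\'eans--Dade exponential, verify Novikov's condition on sub-intervals of length below a threshold $\delta$ (using that $\tfrac12|g^{m,t}|^2$ grows at most quadratically in $x$, so that the relevant quadratic Wiener functionals have finite exponential moments on short intervals), and chain the resulting identities by the tower property; note that $\delta$ must also be allowed to depend on $T-t$ through the variance of $w_{s_0}-w_t$, which your final parenthesis implicitly concedes. The paper instead avoids all exponential-moment computations: after obtaining the supermartingale inequality $\e[M^{x,m,t}_s]\le 1$ by stopping and Fatou, it applies It\^o's formula to the bounded regularization $M^{x,m,t}_\tau/(1+\epsilon M^{x,m,t}_\tau)$, controls the resulting error term by the second-moment estimate $\e\left[M^{x,m,t}_s\left|X^t_s(x)\right|^2\right]\le C(T)$ (itself proved by a further $\epsilon$-truncation and Gr\"onwall's lemma), and passes to the limit $\epsilon\to 0$ by dominated and monotone convergence to get $\e[M^{x,m,t}_s]=1$; the martingale property then follows since a nonnegative supermartingale with constant expectation is a martingale. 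Your approach is shorter and leans on standard Girsanov machinery, delivering the conditional martingale identity directly; the paper's approach is more elementary and self-contained, requiring only an $L^2$ moment bound under the same linear-growth hypothesis \eqref{eq:5-63}, and its truncation technique is reused elsewhere in the paper (e.g.\ for the estimate \eqref{eq:5-67} and for the controlled exponentials \eqref{eq:6-13}).
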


\begin{proof}
We define
\begin{align*}
    \tau_{N}:=\inf \left\{s>t\ |M^{x,m,t}_s>N \right\},
\end{align*}
then, the process $M^{x,m,t}_{s\land\tau_{N}}$ is the solution of the following SDE
\begin{equation}\label{eq:5-65}
\begin{aligned}
    M^{x,m,t}_{s\land\tau_{N}}=1+\int_{t}^{s\land\tau_{N}}M^{x,m,t}_\tau g^{m,t}\left(\tau, X^{t}_\tau(x) \right)dw_\tau,\quad s\in(t,T],
\end{aligned}
\end{equation}
and hence $\e \left[M^{x,m,t}_{s\land\tau_{N}}\right]=1$. Noting that the process in \eqref{eq:5-65} is locally finite almost surely, therefore, we know that $\tau_{N}$ goes to $+\infty$ when 
as $N$ goes to $+\infty$, and thus $M^{x,m,t}_{s\land\tau_{N}}\rightarrow M^{x,m,t}_s$
as $N\rightarrow+\infty.$ By Fatou's Lemma, we obtain 
\begin{equation}
    \e \left[M^{x,m,t}_s\right]\leq 1. \label{eq:5-66}
\end{equation}
By applying It\^o's formula on $\dfrac{M^{x,m,t}_\tau}{1+\epsilon M^{x,m,t}_\tau}$, taking expectation and integrating on $\tau\in [t,s]$, we can write
\begin{equation}\label{eq:5-63'}
    \e\left[\dfrac{M^{x,m,t}_s}{1+\epsilon M^{x,m,t}_s}\right]=\dfrac{1}{1+\epsilon}-\e\left[\int_{t}^{s}\dfrac{\epsilon \left|M^{x,m,t}_\tau\right|^{2}\left|g^{m,t}\left(\tau, X^{t}_\tau(x) \right)\right|^{2}}{\left|1+\epsilon M^{x,m,t}_\tau \right|^{3}}d\tau \right]. 
\end{equation}
From \eqref{eq:5-63}, we get the fact that
\begin{align}\label{eq:5-63''}
    \dfrac{\epsilon\left|M^{x,m,t}_\tau\right|^{2}\left|g^{m,t}\left(\tau, X^{t}_\tau(x) \right)\right|^{2}}{\left|1+\epsilon M^{x,m,t}_\tau \right|^{3}}\le\ &  \dfrac{\epsilon\left|M^{x,m,t}_\tau\right|}{\left|1+\epsilon M^{x,m,t}_\tau \right|^{3}}\cdot M^{x,m,t}_\tau\cdot  \overline{g}^2\left(1+\left|X^{t}_\tau(x)\right| \right)^{2} \notag \\
    \le\ &  \frac{2\overline{g}^2}{3} M^{x,m,t}_\tau \left(1+ \left|X^{t}_\tau(x)\right|^2\right).
\end{align}
We first make the following claim, which will be proven as below:
\begin{equation}
    \e \left[M^{x,m,t}_s \left|X^{t}_s(x)\right|^{2}\right]\leq C(T). \label{eq:5-67}
\end{equation}
With this claim of  \eqref{eq:5-67}, we know that the right hand side of \eqref{eq:5-63''} is bounded in the $L^2$-norm.
Then, from the dominated convergence theorem, we deduce that 
\begin{align*}
    \lim_{\epsilon\to 0}\e\left[\int_{t}^{s}\dfrac{\epsilon \left|M^{x,m,t}_\tau\right|^{2}\left|g^{m,t}\left(\tau, X^{t}_\tau(x) \right)\right|^{2}}{\left|1+\epsilon M^{x,m,t}_\tau \right|^{3}}d\tau \right] = \e\left[\int_{t}^{s}\lim_{\epsilon\to 0}\dfrac{\epsilon \left|M^{x,m,t}_\tau\right|^{2}\left|g^{m,t}\left(\tau, X^{t}_\tau(x) \right)\right|^{2}}{\left|1+\epsilon M^{x,m,t}_\tau \right|^{3}}d\tau \right] = 0.
\end{align*}
Therefore, from \eqref{eq:5-63'}, we have
\begin{align}\label{eq:5-63'''}
    \lim_{\epsilon\to 0}\e\left[\dfrac{M^{x,m,t}_s}{1+\epsilon M^{x,m,t}_s}\right]=1.
\end{align}
From the monotone convergence theorem, we know that
\begin{align*}
    \lim_{\epsilon\to 0}\e\left[\dfrac{M^{x,m,t}_s}{1+\epsilon M^{x,m,t}_s}\right]=\e\left[M^{x,m,t}_s\right];
\end{align*}
together with \eqref{eq:5-63'''}, we obtain \eqref{eq:5-64}.

We now conclude the whole proof by showing the claim \eqref{eq:5-67}; to this end, for any $\epsilon>0$, we can compute by It\^o's formula that
\begin{align*}
    &d\left[\dfrac{M^{x,m,t}_s\left|X^{t}_s(x)\right|^{2}}{1+\epsilon M^{x,m,t}_s\left|X^{t}_s(x)\right|^{2}}\right]\\
    =\ & \Bigg[ \dfrac{M^{x,m,t}_s\left[\text{tr}(a)+2\sigma g^{m,t}\left(s,X^{t}_s(x)\right)\cdot X^{t}_s(x)\right]}{\left(1+\epsilon M^{x,m,t}_s\left|X^{t}_s(x)\right|^{2}\right)^{2}} \\
    &\quad - \dfrac{\epsilon \left|M^{x,m,t}_s\right|^{2} \left|\left|X^{t}_s(x)\right|^{2}g^{m,t}\left(s,X^{t}_s(x)\right)+2\sigma^\top X^{t}_s(x)\right|^2}{\left(1+\epsilon M^{x,m,t}_s\left|X^{t}_s(x)\right|^{2}\right)^{3}} \Bigg] ds\\
    &+ \dfrac{M^{x,m,t}_s\left[\left|X^{t}_s(x)\right|^{2}g^{m,t}\left(s,X^{t}_s(x)\right)+2\sigma^\top X^{t}_s(x)\right]}{(1+\epsilon M^{x,m,t}_s\left|X^{t}_s(x)\right|^{2})^{2}}dw_s.
\end{align*} 
Noting that the second term of the right hand side of the last equality is negative and the third term is a martigale, from \eqref{eq:5-63} we know that 
\begin{align*}
    \dfrac{d}{ds}\;\e\left[\dfrac{M^{x,m,t}_s\left|X^{t}_s(x)\right|^{2}}{1+\epsilon M^{x,m,t}_s\left|X^{t}_s(x)\right|^{2}}\right]\le\ &  \e \left[\dfrac{M^{x,m,t}_s \left[\text{tr }a+2\sigma g^{m,t}\left(s,X^{t}_s(x)\right)\cdot X^{t}_s(x)\right]}{\left(1+\epsilon M^{x,m,t}_s\left|X^{t}_s(x)\right|^{2}\right)^{2}} \right]\\
    \le\ & C\e \left[\dfrac{M^{x,m,t}_s \left(1+\left|X^{t}_s(x)\right|^2 \right)}{1+\epsilon M^{x,m,t}_s\left|X^{t}_s(x)\right|^{2}} \right]\\
    \le\ & C\e \left[M^{x,m,t}_s + \dfrac{M^{x,m,t}_s \left|X^{t}_s(x)\right|^2 }{1+\epsilon M^{x,m,t}_s\left|X^{t}_s(x)\right|^{2}}\right]\\
    \le\ & C\left(1+ \e\left[\dfrac{M^{x,m,t}_s\left|X^{t}_s(x)\right|^{2}}{1+\epsilon M^{x,m,t}_s\left|X^{t}_s(x)\right|^{2}}\right] \right),
\end{align*}
where in the last inequality we used the property $\e\left[M^{x,m,t}_s\right]\le 1$. From this inequality and Gr\"owall's lemma,, we get
\begin{align*}
    \e\left[\dfrac{M^{x,m,t}_s\left|X^{t}_s(x)\right|^{2}}{1+\epsilon M^{x,m,t}_s\left|X^{t}_s(x)\right|^{2}}\right] \le C(T),
\end{align*}
and then, by Fatou's Lemma, we obtain \eqref{eq:5-67}. 
\end{proof}

Thanks to Lemma~\ref{lem5-1}, we can define a probability on $(\Omega,\mathcal{F})$, denoted by $\mathbb{P}^{x,m,t}$, on which $\mathcal{W}_{t}^{s}$ has a Girsanov (Radon-Nikodymn) derivative with respect to $\mathbb{P}$ given by the formula: 
\begin{equation*}\label{eq:5-68}
    \dfrac{d\;\mathbb{P}^{x,m,t}}{d\;\mathbb{P}}\bigg|_{\mathcal{W}_{t}^{s}}=M^{x,m,t}_s.
\end{equation*}
We next define the process 
\begin{equation}\label{eq:5-69}
    w^{x,m,t}_s=w_s-w_t-\int_{t}^{s}\sigma^{-1}\; D_p H\left(s,X^{x,t}_\tau,DV^{m,t}\left(s,X^{x,t}_\tau\right)\right)d\tau,\quad s\in[t,t].
\end{equation}
By Girsanov's theorem, on the probability space $\left(\Omega,\mathcal{F},\mathbb{P}^{x,m,t}\right)$, the process $w^{x,m,t}_\cdot$ becomes a standard Wiener process under $\mathbb{P}^{x,m,t}$, and $X^{t}_s(x)$ turns out to be the corresponding solution of the following SDE\footnote{The process $X^{x,m,t}_\cdot(x)$ in \eqref{eq:5-69} under $\mathbb{P}^{x,m,t}$ is sample pathwisely equivalent to $X^t_\cdot(x)$ in \eqref{def:SDE} under $\mathbb{P}$.}
\begin{equation}\label{eq:5-70}
    X^{x,m,t}_s(x)=x+\int_t^s D_p H\left(\tau,X^{x,m,t}_\tau(x),DV^{m,t}\left(\tau,X^{x,m,t}_\tau(x)\right)\right)d\tau+\sigma \left(w^{x,m,t}_s-w^{x,m,t}_t\right),\quad s\in[t,T].
\end{equation}
We now define the probability measure $m^{m,t}_s$ on $\brn$ by the following formula: for any bounded and continuous function $\varphi$,
\begin{equation}\label{eq:5-71}
    \int_{\brn}\varphi(\xi)dm^{m,t}_s (\xi)=\int_{\brn}\e^{x,m,t}\left[\varphi\left(X^{x,m,t}_s(x)\right)\right]dm(x)=\int_{\brn}\e \left[M^{x,m,t}_s\varphi\left(X^{t}_s(x)\right)\right]dm(x).
\end{equation}
Considering $\Psi$ the solution of the equation \eqref{eq:5-152}, we have $\int_{\brn}\varphi(\xi)dm^{m,t}_s(\xi)=\int_{\brn}\Psi^{m,t}(t,x)dm(x)$. Thus, $m^{m,t}_s$ coincides with the probability measure defined by \eqref{eq:5-151}. 

\subsection{Recovering Master Equation and Bellman Equations}

In this part, we use the solution $V^{m,t}$ of the HJB equation \eqref{eq:5-150} to solve the master equation and the Bellman equation. This has been done formally in Subsection~\ref{subsec:RECOVERING-ALL-EQUATIONS}, and we give the formal statements together with rigorous proofs as follows. 

We define the functional 
\begin{align}\label{def:U}
    U(s,x,m):=V^{m,s}(s,x),\quad (s,x,m)\in[t,T]\times\brn\times\pr_2(\brn).
\end{align}
In the next few subsections, we shall make use of the following condition on $U$.
\begin{condition}\label{Condition:U}
    The functional $U$ is linear functional differentiable, and its first and second derivatives $D_{\xi}\dfrac{dU}{d\nu}(s,x,m)(\xi)$ and $D_{\xi}^{2}\dfrac{dU}{d\nu}(s,x,m)(\xi)$ exist.
\end{condition}
In Subsection~\ref{subsec:add_regularity} (see Theorem~\ref{thm:D_zV}), we shall rigorously show that Condition~\ref{Condition:U} holds under some mild assumptions. We first give the following result.

\begin{proposition}\label{prop:6-1}
Under Condition~\ref{Condition:U}, the function $U$ defined in \eqref{def:U} is a solution of the following master equation 
\begin{equation}\label{eq:6-1}
    \left\{
        \begin{aligned}
            &-\dfrac{\partial U}{\partial s}(s,x,m)+\mathcal{A}U(s,x,m)+\int_{\brn}\mathcal{A}_\xi \frac{dU}{d\nu}(s,x,m)(\xi)dm(\xi)\\
            &=H\left(s,x,DU(s,x,m)\right)+\int_{\brn}D_p H \left(s,\xi,DU(s,\xi,m)\right)\cdot D_\xi\frac{dU}{d\nu}(s,x,m)(\xi) dm(\xi)+
            \frac{dF}{d\nu}(m)(x),\\
            &U(T,x,m)=h(x)+\frac{dF_T}{d\nu}(m)(x).
        \end{aligned}
    \right.
\end{equation} 
\end{proposition}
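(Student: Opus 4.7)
The plan is to deduce the master equation for $U$ from the HJB equation for $V^{m,t}$ by combining three ingredients: a flow (semigroup-type) identity for the HJB solution, It\^o's formula for functions of the measure flow applied in the $m$-variable, and finally an evaluation at $s=t$.

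First, I would establish the flow property that for every $t\le s\le T$,
\begin{equation*}
    V^{m,t}(s,x)\ =\ V^{m^{m,t}_s,\,s}(s,x)\ =\ U\bigl(s,x,m^{m,t}_s\bigr).
\end{equation*}
The second equality is nothing but the definition \eqref{def:U} of $U$ specialized to the initial time of the restarted problem. The first equality says that restarting the HJB--measure-flow coupled system at time $s$ from the measure $m^{m,t}_s$ produces the same value on $[s,T]$ as the original system: both sides satisfy \eqref{eq:5-150} on $[s,T]$ with the same terminal datum at $T$ and the same non-local source $\frac{dF}{d\nu}(m^{m,t}_\tau)(\cdot)$ for $\tau\in[s,T]$, and then uniqueness of the HJB solution in $B_T$ forces equality.

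Next, with the drift $\alpha^{m,t}_s(x)=D_pH(s,x,DV^{m,t}(s,x))$ driving $m^{m,t}_s$ through \eqref{eq:5-151}--\eqref{eq:5-152}, Condition~\ref{Condition:U} supplies exactly the regularity needed to apply the It\^o-type formula \eqref{eq:2-561} to $s\mapsto U(s,x,m^{m,t}_s)$ for each fixed $x$. Using the flow identity above on the left-hand side, this yields
\begin{align*}
    \frac{\partial V^{m,t}}{\partial s}(s,x)
    =\ & \frac{\partial U}{\partial s}\bigl(s,x,m^{m,t}_s\bigr)\\
    &+\int_{\brn}\Bigl[D_\xi\tfrac{dU}{d\nu}\bigl(s,x,m^{m,t}_s\bigr)(\xi)\cdot D_pH\bigl(s,\xi,DV^{m,t}(s,\xi)\bigr)\\
    &\qquad\qquad -\mathcal{A}_\xi\tfrac{dU}{d\nu}\bigl(s,x,m^{m,t}_s\bigr)(\xi)\Bigr]dm^{m,t}_s(\xi).
\end{align*}
I then plug this expression for $\partial_s V^{m,t}$ into the HJB equation \eqref{eq:5-150}, using $V^{m,t}(s,\cdot)=U(s,\cdot,m^{m,t}_s)$ to rewrite $\mathcal{A}V^{m,t}=\mathcal{A}U(s,x,m^{m,t}_s)$ and $DV^{m,t}(s,\xi)=DU(s,\xi,m^{m,t}_s)$ inside $H$ and $D_pH$. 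Evaluating the resulting identity at $s=t$, where $m^{m,t}_t=m$ is arbitrary, delivers the interior equation of \eqref{eq:6-1}; the terminal condition is immediate from $U(T,x,m)=V^{m,T}(T,x)=h(x)+\frac{dF_T}{d\nu}(m)(x)$.

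The main obstacle is the two regularity hypotheses that power the argument: the flow/uniqueness statement for \eqref{eq:5-150}, which the authors defer but is needed to justify $V^{m,t}(s,x)=U(s,x,m^{m,t}_s)$, and Condition~\ref{Condition:U} which must be strong enough to legitimize the It\^o expansion \eqref{eq:2-561} (in particular, continuity of the second derivative $D_\xi^2\frac{dU}{d\nu}$ so that the $\mathcal{A}_\xi$ term is integrable against $m^{m,t}_s$). Once these are in hand, everything else is substitution and rearrangement.
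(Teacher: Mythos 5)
Your proposal is correct and follows essentially the same route as the paper's own proof: establish the flow identity $V^{m,t}(s,x)=V^{m^{m,t}_s,s}(s,x)=U(s,x,m^{m,t}_s)$ via the measure flow property and uniqueness of the HJB solution, apply the It\^o-type chain rule in the measure variable to compute $\partial_s V^{m,t}$, substitute into the HJB equation \eqref{eq:5-150}, and evaluate at $s=t$. The only cosmetic difference is that you invoke \eqref{eq:2-561} directly while the paper routes the computation through the Girsanov representation \eqref{eq:5-70}--\eqref{eq:5-71} of $m^{m,t}_s$; these coincide since the drift is the feedback $D_pH(s,\cdot,DV^{m,t}(s,\cdot))$.
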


\begin{proof}
The terminal condition is clear. We first claim that 
\begin{align}\label{flow:0}
    V^{m^{m,t}_\tau,\tau}(s,x) = V^{m,t}(s,x), \quad t\le \tau\le s\le T.
\end{align}
Indeed, from the flow property, we know that 
\begin{align}\label{flow:1}
    m^{m,t}_s=m^{m^{m,t}_\tau,\tau}_s,\quad t\le \tau\le s\le T.
\end{align}
From the HJB equation \eqref{eq:5-150}, we know that the $V^{m^{m,t}_\tau,\tau}(s,x)$ (for national convenience we denote it by $v(s,x)$) satisfies the following equation:
\begin{equation*}
    \left\{
    \begin{aligned}
        &-\frac{\partial v}{\partial s}(s,x)+\mathcal{A}v(s,x)=H\left(s,x,Dv(s,x)\right)+\frac{dF}{d\nu}\left(m^{m^{m,t}_\tau,\tau}_s\right)(x),\quad s\in[t,T),\\
        &v(T,x)=h(x)+\dfrac{dF_T}{d\nu}\left(m^{m^{m,t}_\tau,\tau}_T\right)(x),\quad x\in\brn.   
    \end{aligned}
    \right.
\end{equation*}
From \eqref{flow:1}, we see that the last equation is equivalent to the HJB equation \eqref{eq:5-150} for $V^{m,t}(s,x)$, from which we obtain \eqref{flow:0}. Then, we see from \eqref{def:U} and \eqref{flow:0} that 
\begin{align*}
    U\left(s,x,m^{m,t}_s\right) = V^{m^{m,t}_s,s}(s,x) = V^{m,t}(s,x).
\end{align*}
So we can use the characterization of $m^{m,t}_s$ given by \eqref{eq:5-70} (or the first order condition) and \eqref{eq:5-71} to write, by using It\^o's formula in \eqref{eq:2-560} for $U\left(s,x,m^{m,t}_s\right)$ (with the process $X^{m,t}_s(x)$ in \eqref{eq:2-50} replaced by $X^{x,m,t}_s(x)$ being defined in \eqref{eq:5-70}), we have
\begin{align*}
    &\dfrac{\partial V^{m,t}}{\partial s}(s,x)= \dfrac{d }{d s}U\left(s,x,m^{m,t}_s\right)\\
    =\ &  \int_{\brn}\e^{z,m,t}\bigg[ D_{\xi}\dfrac{dU}{d\nu}\left(s,x,m^{m,t}_s\right)\left(X^{x,m,t}_s(z)\right)\cdot D_p H\left(s,X^{x,m,t}_s(z),DU\left(s,X^{x,m,t}_s(z),m^{m,t}_s\right)\right)\\
    &\quad\qquad\qquad -\mathcal{A}_{\xi}\dfrac{dU}{d\nu}\left(s,x,m^{m,t}_s\right)\left(X^{x,m,t}_s(z)\right)\bigg]dm(z)+\dfrac{\partial U}{\partial s}\left(s,x,m^{m,t}_s\right),
\end{align*}
and from the HJB equation \eqref{eq:5-150} for $V^{m,t}$, we know that
\begin{align*}
    \dfrac{\partial V^{m,t}}{\partial s}(s,x)=\ & \mathcal{A}V^{m,t}(s,x)-H\left(s,x,DV^{m,t}(s,x)\right)-\frac{dF}{d\nu}\left(m^{m,t}_s\right)(x)\\
    =\ & \mathcal{A}U\left(s,x,m^{m,t}_s\right)-H\left(s,x,DU\left(s,x,m^{m,t}_s\right)\right)-\dfrac{dF}{d\nu}\left(m^{m,t}_s\right).
\end{align*}
Writing this relation at $t=s$, we get immediately the master equation \eqref{eq:6-1}.
\end{proof}

We next define the following class of the anti-derivatives 
\begin{equation}\label{eq:6-4}
\begin{aligned}
    \mathcal{S}:=\bigg\{ \Phi:[t,T]\times\pr_2(\brn)\to\br \ \bigg| \ & \dfrac{d\Phi}{d\nu}(s,m)(x)=U(s,x,m), \ \forall (s,m,x)\in [t,T]\times\pr_2(\brn)\times\brn, \\
    &\Phi(T,m)=\int_{\brn}h(\xi)dm(\xi)+F_{T}(m),\ \forall m \in  \pr_2(\brn) \bigg\}.
\end{aligned}
\end{equation}
The set $\mathcal{S}$ is non-empty, indeed, we see that the following function
\begin{align*}
    \Phi(s,m):=h(0)+F_T(\delta_0)+\int_0^1\int_\brn  U\left(s,x,\delta_0+h (m-\delta_0)\right)d(m-\delta_0)(x) dh
\end{align*}
belongs to $\mathcal{S}$. Here, $\delta_0$ is the Dirac distribution with the point mass at $0$. We now recover the Bellman equation as follows. 


\begin{proposition}\label{prop:6-2} 
Under Condition~\ref{Condition:U}, there exists a function $\Phi\in\mathcal{S}$, such that it is the solution of the following Bellman equation: 
\begin{equation}\label{eq:6-5}
    \left\{
        \begin{aligned}
            &-\dfrac{\partial \Phi}{\partial s}(s,m)+\int_{\brn}\mathcal{A}\frac{d\Phi}{d\nu}(s,m)(\xi)dm(\xi)=\int_{\brn}H\left(s,\xi,D\dfrac{d\Phi}{d\nu}(s,m)(\xi)\right)dm(\xi)+F(m),\\
            &\Phi(T,m)=\int_{\brn}h(\xi)dm(\xi)+F_{T}(m).
        \end{aligned}
    \right.
\end{equation} 
\end{proposition}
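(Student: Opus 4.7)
The plan is to construct an explicit candidate $\Phi^0\in\mathcal{S}$ using the formula already displayed just after \eqref{eq:6-4}, then correct it by a purely time-dependent additive term so that the Bellman equation \eqref{eq:6-5} holds. Explicitly, set
\begin{equation*}
\Phi^0(s,m):=h(0)+F_T(\delta_0)+\int_0^1\int_\brn U\!\left(s,x,\delta_0+\lambda(m-\delta_0)\right)d(m-\delta_0)(x)\,d\lambda.
\end{equation*}
First I would verify that $\Phi^0\in\mathcal{S}$: differentiating $\Phi^0$ functionally in $m$ along a signed direction and integrating by parts in $\lambda$, together with the symmetry $\dfrac{dU}{d\nu}(s,x,m)(y)=\dfrac{d^2\Phi^0}{d\nu^2}(s,m)(x,y)=\dfrac{dU}{d\nu}(s,y,m)(x)$ available under Condition~\ref{Condition:U}, gives $\dfrac{d\Phi^0}{d\nu}(s,m)(x)=U(s,x,m)$. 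The terminal condition $\Phi^0(T,m)=\int_\brn h\,dm+F_T(m)$ follows by substituting $U(T,x,m)=h(x)+\dfrac{dF_T}{d\nu}(m)(x)$ into the defining integral and applying the linear functional derivative formula to $F_T$.

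Next, introduce the ``Bellman defect''
\begin{equation*}
R(s,m):=-\dfrac{\partial\Phi^0}{\partial s}(s,m)+\int_\brn\mathcal{A}U(s,\xi,m)\,dm(\xi)-\int_\brn H\!\left(s,\xi,DU(s,\xi,m)\right)dm(\xi)-F(m).
\end{equation*}
The crux is to show that $R(s,\cdot)$ is constant on $\pr_2(\brn)$ by proving $\dfrac{dR}{d\nu}(s,m)(y)\equiv 0$. Computing the functional derivative term by term yields: (i) $\partial_s U(s,y,m)$ from the first summand, since $\dfrac{d\Phi^0}{d\nu}=U$; (ii) $\mathcal{A}U(s,y,m)+\int_\brn\mathcal{A}_\xi\dfrac{dU}{d\nu}(s,y,m)(\xi)\,dm(\xi)$ from the second, after the above symmetry is used to transfer the free variable into the second slot of $\dfrac{dU}{d\nu}$; (iii) $H(s,y,DU(s,y,m))+\int_\brn D_pH(s,\xi,DU(s,\xi,m))\cdot D_\xi\dfrac{dU}{d\nu}(s,y,m)(\xi)\,dm(\xi)$ from the third, by the same swap; and (iv) $\dfrac{dF}{d\nu}(m)(y)$. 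The resulting expression is exactly the left-hand side minus the right-hand side of the master equation \eqref{eq:6-1} evaluated at $(s,y,m)$, which vanishes by Proposition~\ref{prop:6-1}. Since $\pr_2(\brn)$ is convex, a vanishing linear functional derivative in $m$ forces $R(s,m)=a(s)$ for some function $a$ of $s$ alone.

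Finally, set $\Phi(s,m):=\Phi^0(s,m)-\int_s^T a(\tau)\,d\tau$. The subtracted term is $m$-independent, so $\dfrac{d\Phi}{d\nu}(s,m)(x)=\dfrac{d\Phi^0}{d\nu}(s,m)(x)=U(s,x,m)$, and $\Phi(T,m)=\Phi^0(T,m)=\int_\brn h\,dm+F_T(m)$, confirming $\Phi\in\mathcal{S}$. Meanwhile $\dfrac{\partial\Phi}{\partial s}=\dfrac{\partial\Phi^0}{\partial s}+a(s)=\dfrac{\partial\Phi^0}{\partial s}+R(s,m)$, which by the very definition of $R$ rearranges into the Bellman equation \eqref{eq:6-5}.

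The main obstacle is to justify the chain-rule computations rigorously under the regularity of $U$ supplied by Condition~\ref{Condition:U} and the a priori bounds gathered in Table~\ref{table:1}: one must legitimately exchange $\partial_s$ with the line integral defining $\Phi^0$, commute $\dfrac{d}{d\nu}$ with the elliptic operator $\mathcal{A}$ and with integration against $dm$, and secure enough integrability at infinity so that the nonlocal Hamiltonian and measure-kernel terms remain finite in the weighted space $L^{2}_{\pi_\gamma}$. The symmetry of the second mixed functional derivative of $\Phi^0$ plays the central algebraic role, because it is exactly that symmetry which converts the ``boundary plus cross-derivative'' expressions produced by differentiating $\int_\brn\mathcal{A}U\,dm$ and $\int_\brn H(\cdot,DU)\,dm$ into the kernel integrals appearing in the master equation \eqref{eq:6-1}.
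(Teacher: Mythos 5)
Your proposal is correct and follows essentially the same route as the paper: the paper likewise picks an element of $\mathcal{S}$ (whose non-emptiness is witnessed by the very formula you use for $\Phi^0$), observes that plugging $U=\frac{d\Phi'}{d\nu}$ into the master equation \eqref{eq:6-1} exhibits the Bellman defect as having vanishing linear functional derivative in $m$, concludes it equals some $C(s)$, and corrects by subtracting $\int_s^T C(\tau)\,d\tau$. Your version merely instantiates the arbitrary element of $\mathcal{S}$ explicitly and spells out the term-by-term differentiation (including the symmetry of $\frac{d^2\Phi}{d\nu^2}$) that the paper leaves more implicit.
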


\begin{proof}
Pick one $\Phi'\in\mathcal{S}$, we plug the relation $U(s,x,m)=\dfrac{d\Phi'}{d\nu}(s,m)(x)$ into the master equation \eqref{eq:6-1} to obtain 
\begin{align*}
    0=\ &-\dfrac{\partial}{\partial s}\dfrac{d\Phi'}{d\nu}(s,m)(x)+\mathcal{A}\frac{d\Phi'}{d\nu}(s,m)(x)+\int_{\brn}\mathcal{A}_{\xi}\dfrac{d^{2}\Phi'}{d\nu^{2}}(s,m)(x,\xi)dm(\xi)- \dfrac{dF}{d\nu}(m)(x)\\
    & - H\left(s,x,D\dfrac{d\Phi'}{d\nu}(s,m)(x)\right) - \int_{\brn}D_{\xi}\dfrac{d^{2}\Phi'}{d\nu^{2}}(s,m)(x,\xi)\cdot D_p H\left(s,\xi,D\dfrac{d\Phi'}{d\nu}(s,m)(\xi)\right)dm(\xi),
\end{align*}
and we see immediately that the right hand side of the last equation is the linear functional derivative of the following
\begin{equation*}
    -\dfrac{\partial \Phi'}{\partial s}(s,m)+\int_{\brn}\mathcal{A}\dfrac{d\Phi'}{d\nu}(s,m)(\xi)dm(\xi)-\int_{\brn}H\left(s,\xi,D\dfrac{d\Phi'}{d\nu}(s,m)(\xi)\right)dm(\xi)-F(m).
\end{equation*}
Therefore, we know that there exist some function $C(s)$ depending solely on $s$ but not $m$ such that 
\begin{equation}\label{eq:4-13}
    -\dfrac{\partial \Phi'}{\partial s}(s,m)+\int_{\brn}\mathcal{A}\dfrac{d\Phi'}{d\nu}(s,m)(\xi)dm(\xi)-\int_{\brn}H\left(s,\xi,D\dfrac{d\Phi'}{d\nu}(s,m)(\xi)\right)dm(\xi)-F(m)=C(s).
\end{equation}
We now define the function $\Phi(s,m):=\Phi'(s,m)-\int_s^T C(\tau)d\tau$. It is easy to see that 
\begin{align*}
    \Phi(T,m)=\Phi'(T,m),\quad \frac{d\Phi}{d\nu}(s,m)(x)=\frac{d\Phi'}{d\mu}(s,m)(x),
\end{align*}
therefore, we know that $\Phi\in\mathcal{S}$. Moreover, from \eqref{eq:4-13}, we see that $\Phi(s,m)$ satisfies
\begin{equation*}
    -\dfrac{\partial \Phi}{\partial s}(s,m)+\int_{\brn}\mathcal{A}\dfrac{d\Phi}{d\nu}(s,m)(\xi)dm(\xi)-\int_{\brn}H\left(s,\xi,D\dfrac{d\Phi}{d\nu}(s,m)(\xi)\right)dm(\xi)-F(m)=0,
\end{equation*}
which is the Bellman equation. 
\end{proof}

We shall come back to the the sufficiency leading to Condition~\ref{Condition:U} in Subsection~\ref{subsec:add_regularity} below. 


\subsection{Revisiting Mean Field Type Control Problem}

We begin by giving a relation between the function $\Phi$ defined in Proposition~\ref{prop:6-2} which satisfies the Bellman equation, and the mean field type control problem under Condition~\ref{Condition:U}. From Propositions~\ref{prop:6-1} and \ref{prop:6-2}, we know that the Bellman equation \eqref{eq:6-5} also reads 
\begin{equation*}\label{eq:6-7}
    \left\{
        \begin{aligned}
            &-\dfrac{\partial \Phi}{\partial s}(s,m)+\int_{\brn}\mathcal{A}U(s,x,m)dm(x)=\int_{\brn}H\left(s,x,DU(s,x,m)\right)dm(x)+F(m),\\
            &\Phi(T,m)=\int_{\brn}h(x)dm(x)+F_{T}(m).
        \end{aligned}
    \right.
\end{equation*} 
Substituting $m$ by $m^{m,t}_s$ yields that
\begin{equation}\label{eq:6-8}
    -\dfrac{\partial \Phi}{\partial s}\left(s,m^{m,t}_s\right)+\int_{\brn}\mathcal{A}U\left(s,x,m^{m,t}_s\right)dm^{m,t}_s(x)=\int_{\brn}H\left(s,x,DU\left(s,x,m^{m,t}_s\right)\right)dm^{m,t}_s(x)+F\left(m^{m,t}_s\right).
\end{equation}
By using It\^o's formula and the very definition of $\Phi$ in terms of $U$, we know that
\begin{align}
    \dfrac{d}{ds}\left[\Phi\left(s,m^{m,t}_s\right)\right]=\ & \dfrac{\partial\Phi}{\partial s}\left(s,m^{m,t}_s\right)-\int_{\brn}\mathcal{A}\dfrac{d\Phi}{d\nu}\left(s,m^{m,t}_s\right)(x)dm^{m,t}_s(x) \notag\\
    &+\int_{\brn}D\dfrac{d\Phi}{d\nu}\left(s,m^{m,t}_s\right)(x)\cdot D_p H\left(s,x,DU\left(s,x,m^{m,t}_s\right)\right)dm^{m,t}_s(x) \notag\\
    =\ & \dfrac{\partial\Phi}{\partial s}\left(s,m^{m,t}_s\right)-\int_{\brn}\mathcal{A}U\left(s,x,m^{m,t}_s\right)dm^{m,t}_s(x)\notag\\
    &+\int_{\brn}DU\left(s,x,m^{m,t}_s\right)\cdot D_pH\left(s,x,DU\left(s,x,m^{m,t}_s\right)\right)dm^{m,t}_s(x). \label{eq:6-9}
\end{align}
Combining \eqref{eq:6-8} and \eqref{eq:6-9}, by using the definition of $\widehat{v}$ in \eqref{def:hat_v} and the first relation in \eqref{D_pH}, we obtain 
\begin{align*}
    \dfrac{d}{ds}\left[\Phi\left(s,m^{m,t}_s\right)\right]=\ & \int_{\brn}DU\left(s,x,m^{m,t}_s\right)\cdot D_p H\left(s,x,DU\left(s,x,m^{m,t}_s\right)\right)dm^{m,t}_s(x)\\
    &-\int_{\brn}H\left(s,x,DU\left(s,x,m^{m,t}_s\right)\right)dm^{m,t}_s(x)-F\left(m^{m,t}_s\right)\\
    =\ & -\int_{\brn}l\left(s,x,\widehat{v}\left(s,x,DU\left(s,x,m^{m,t}_s\right)\right)\right)dm^{m,t}_s(x)-F\left(m^{m,t}_s\right).
\end{align*}
By integrating the last equality between $t$ and $T$, we obtain the following formula for $\Phi$:
\begin{align}
    \Phi(t,m)=\ & \int_{t}^{T}\left[\int_{\brn}l\left(s,x,\widehat{v}\left(s,x,DU\left(s,x,m^{m,t}_s\right)\right)\right)dm^{m,t}_s(x)+F\left(m^{m,t}_s\right)\right]ds \notag \\
    &+\int_{\brn}h(x)dm^{m,t}_T(x)+F_{T}\left(m^{m,t}_T\right). \label{eq:6-11}
\end{align}

We consider a family of stochastic processes, denoted by  $v^{m,t}_s(x),\ (s,x)\in[t,T]\times\brn$, with values in $\brn$ and adapted to the filtration $\mathcal{W}_{t}^{s}$, and they satisfy 
\begin{equation}\label{eq:6-12}
    \left|v^{m,t}_s(x)\right|\le C\left(1+\left|X^{t}_s(x)\right|\right),\quad \text{a.s.},
\end{equation}
for some $C>0$, where $X^{t}_s(x)$ is defined in \eqref{def:SDE}. The constraint \eqref{eq:6-12} defines a closed subspace of the Hilbert space $L_{\mathcal{W}_{t}}^{2}\left(t,T;L_{m}^{2}(\Omega,\mathcal{F},P;\brn)\right)$, we denote this by $\mathcal{V}^{m,t}$. We then define the process 
\begin{equation}\label{eq:6-13}
    M^{v^{m,t}_\cdot(x)}_s=\exp\left(\int_{t}^{s}\sigma^{-1}\; v^{m,t}_\tau(x)\cdot dw(\tau)-\dfrac{1}{2}\int_{t}^{s}\left|\sigma^{-1}\; v^{m,t}_\tau(x)\right|^{2}d\tau\right),\quad s\in[t,T].
\end{equation}
Following a similar approach as in the proof leading to Lemma~\ref{lem5-1}, we know that for a control $v^{m,t}_\cdot(\cdot)\in \mathcal{V}^{m,t}$ , this super-martingale $M^{v^{m,t}_\cdot(x)}_s$ is indeed a martingale and it satisfies $\e \left[M^{v^{m,t}_\cdot(x)}_s\right]=1$.
We then define a probability on $(\Omega,\mathcal{F})$, denoted by $\mathbb{P}^{v^{m,t}_\cdot(x)}$, by the Girsanov (Radon-Nikodym) derivative 
\begin{equation*}\label{eq:6-16}
    \dfrac{d\mathbb{P}^{v^{m,t}_\cdot(x)}}{d\mathbb{P}}\bigg|_{\mathcal{W}_{t}^{s}}=M^{v^{m,t}_\cdot(x)}_s .
\end{equation*}
We next define the process 
\begin{equation*}\label{eq:6-17}
    w^{v^{m,t}_\cdot(x)}_s=w_s-w_t-\int_{t}^{s}\sigma^{-1}\;v^{m,t}_\tau(x)d\tau,\quad \forall (s,x)\in[t,T]\times\brn.
\end{equation*}
By Girsanov's theorem, in the probability space $\left(\Omega,\mathcal{F},\mathbb{P}^{v^{m,t}_\cdot(x)}\right)$, the process $w^{v^{m,t}_\cdot(x)}_\cdot$ is a Wiener process, and the process $X^{t}_s(x)$ has the following It\^o's integral representation:
\begin{equation*}\label{eq:6-18}
    X^{t}_s(x)=x+\int_t^s v^{m,t}_\tau(x)d\tau+w^{v^{m,t}_\cdot(x)}_s-w^{v^{m,t}_\cdot(x)}_t,\quad \forall s\in[t,T].
\end{equation*}
We next define on the product space $\left(\Omega\times \brn,\mathcal{F}\times\mathcal{B}(\brn)\right)$ the product probability measure $\mathbb{P}^{v^{m,t}_s(\cdot)}\otimes m$ defined by the expectation of a random variable $Z(x,\omega)$:
\begin{equation*}\label{eq:6-19}
    \e^{P^{v^{m,t}_s(\cdot)}\otimes m}[Z]=\int_{\brn}\e \left[M^{v^{m,t}_\cdot(x)}_s Z(x)\right]dm(x).
\end{equation*}
We then denote by $m^{v,m,t}_s$ the pushed forward probability $X^{t}_s\#\left(\mathbb{P}^{v^{m,t}_s(\cdot)}\otimes m\right)$, therefore, we can write for any test function $\varphi$ which is continuous and satisfies $|\varphi(x)|\le C\left(1+|x|^{2}\right)$,
\begin{equation}\label{eq:6-20}
    \int_{\brn}\varphi(\xi)dm^{v,m,t}_s(\xi)=\int_{\brn}\e \left[M^{v^{m,t}_\cdot(x)}_s\varphi\left(X^{t}_s(x)\right)\right]dm(x).
\end{equation}
From the very definition of the payoff functional and \eqref{eq:6-20}, we know that the cost associated with the control $v^{m,t}_\cdot(\cdot)$ can be written as
\begin{equation}\label{eq:6-21}
\begin{aligned}
    J_{mt}\left(v^{m,t}\right)=\ & \int_{t}^{T}\left(\int_{\brn}\e \left[M^{v^{m,t}_\cdot(x)}_s \;l\left(s,X^{t}_s(x),v^{m,t}_s(x)\right)\right]dm(x)+F\left(m^{v,m,t}_s\right)\right)ds\\
    &+\int_{\brn}\e\left[M^{v^{m,t}_\cdot(x)}_T \;h\left(X^{t}_T\right)\right]dm(x)+F_{T}\left(m^{v,m,t}_T\right).
\end{aligned}
\end{equation}
To simplify the notation, we denote by $u^{m,t}_s(x):=\widehat{v}\left(s,X^{t}_s(x),DU\left(s,X^{t}_s(x),m^{m,t}_s\right)\right)$, clearly, $m^{u,m,t}_s=m^{m,t}_s$. Noting the facts that $\widehat{v}(t,x,p)=D_p H(t,x,p)$ and $DU(t,x,m)=DV^{m,t}(t,x)$ (see \eqref{def:U}), from Assumption~\ref{assumption:H} and the fact that $V^{m,t}\in B_T$, we know that $\widehat{v}(t,x,DU(t,x,m))$ has a linear growth. Then, from \eqref{eq:5-71} and the definition of the payoff functional, we know that $u^{m,t}_\cdot(\cdot)\in\mathcal{V}^{m,t}$ and 
\begin{align*}
    J_{mt}\left(u^{m,t}\right)=\ &  \int_{t}^{T}\bigg(\int_{\brn}\e \left[M^{u^{m,t}_\cdot(x)}_s\;l\left(s,X^{t}_s(x),\widehat{v}\left(s,X^{t}_s(x),DU\left(s,X^{t}_s(x),m^{m,t}_s\right)\right)\right)\right]dm(x)\\
    &\qquad +F\left(m^{m,t}_s\right)\bigg)ds + +\int_{\brn}\e\left[M^{u^{m,t}_\cdot(x)}(T)\;h\left(X^{t}_T(x)\right)\right]dm(x)+F_{T}\left(m^{m,t}_T\right)\\
    =\ & \int_{t}^{T}\left[\int_{\brn}l\left(s,x,\widehat{v}\left(s,x,DU\left(s,x,m^{m,t}_s\right)\right)\right)dm^{m,t}_s(x)+F\left(m^{m,t}_s\right)\right]ds \\
    &+ \int_{\brn}h(x)dm^{m,t}_T(x)+F_{T}\left(m^{m,t}_T\right).
\end{align*}
Then, by comparing with the formula \eqref{eq:6-11}, we obtain that
\begin{equation}\label{eq:6-25}
    J_{mt}\left(u^{m,t}\right)=\Phi(t,m),\quad \forall (t,x)\in[0,T]\times\pr_2(\brn).
\end{equation}
We can now give the following result, which shows that $u^{m,t}$ is an optimal control for the mean field type control problem, and then $\Phi$ is the value function due to the uniqueness result of the Bellman equation \eqref{eq:6-5}; see \cite{AB10} for instance.  

\begin{theorem}\label{theo6-1}
Under Condition~\ref{Condition:U}, the control $u^{m,t}$ minimizes the cost functional $J_{mt}(\cdot)$ defined by \eqref{eq:6-21}. 
\end{theorem}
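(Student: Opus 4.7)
The plan is a standard verification-type argument built on the already-established equality $J_{mt}(u^{m,t})=\Phi(t,m)$ in \eqref{eq:6-25}. It suffices to show the one-sided inequality $\Phi(t,m)\leq J_{mt}(v^{m,t})$ for every admissible $v^{m,t}\in\mathcal{V}^{m,t}$. To this end, I would fix such a $v^{m,t}$, work on the probability space $(\Omega,\mathcal{F},\mathbb{P}^{v^{m,t}_\cdot(\cdot)})$ supplied by the Girsanov change of measure \eqref{eq:6-13}, where $X^{t}_\cdot(x)$ has drift $v^{m,t}_\cdot(x)$, and consider the function $s\mapsto \Phi(s,m^{v,m,t}_s)$ along the measure flow defined by \eqref{eq:6-20}.

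The first step is to apply the chain rule \eqref{eq:2-560} to $\Phi(s,m^{v,m,t}_s)$, using that $D\tfrac{d\Phi}{d\nu}(s,m)(\xi)=DU(s,\xi,m)$ by Proposition~\ref{prop:6-2} and the defining property of $\mathcal{S}$ in \eqref{eq:6-4}. Next, I would substitute $\tfrac{\partial\Phi}{\partial s}(s,m^{v,m,t}_s)$ using the Bellman equation \eqref{eq:6-5} evaluated at $m=m^{v,m,t}_s$; the $\mathcal{A}U$ terms cancel after re-expressing the push-forward integral $\int_\brn\mathcal{A}U(s,\xi,m^{v,m,t}_s)dm^{v,m,t}_s(\xi)$ as $\mathbb{E}^{v^{m,t}_\cdot(\cdot)}\int_\brn\mathcal{A}U(s,X^{t}_s(x),m^{v,m,t}_s)dm(x)$. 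One is then left with
\begin{align*}
\frac{d}{ds}\Phi(s,m^{v,m,t}_s)=\ &-\int_\brn H\!\left(s,\xi,DU(s,\xi,m^{v,m,t}_s)\right)dm^{v,m,t}_s(\xi)-F(m^{v,m,t}_s)\\
&+\mathbb{E}^{v^{m,t}_\cdot(\cdot)}\!\int_\brn DU\!\left(s,X^{t}_s(x),m^{v,m,t}_s\right)\cdot v^{m,t}_s(x)\,dm(x).
\end{align*}
The definition of the Hamiltonian gives $H(s,\xi,p)\leq l(s,\xi,v)+p\cdot v$ for every $v$; applying this pointwise with $\xi=X^{t}_s(x)$, $v=v^{m,t}_s(x)$ and rewriting the $H$-integral via the push-forward, the gradient term $DU\cdot v^{m,t}_s$ is absorbed and one obtains $\tfrac{d}{ds}\Phi(s,m^{v,m,t}_s)\geq -\mathbb{E}^{v^{m,t}_\cdot(\cdot)}\!\int_\brn l(s,X^{t}_s(x),v^{m,t}_s(x))dm(x)-F(m^{v,m,t}_s)$. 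Integrating in $s\in[t,T]$ and combining with the terminal identity $\Phi(T,m^{v,m,t}_T)=\int_\brn h(\xi)dm^{v,m,t}_T(\xi)+F_T(m^{v,m,t}_T)$ from \eqref{eq:6-4} yields precisely $\Phi(t,m)\leq J_{mt}(v^{m,t})$ in view of the cost representation \eqref{eq:6-21}.

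The main technical obstacle is regularity and integrability. Condition~\ref{Condition:U} and the fact $U(s,x,m)=V^{m,s}(s,x)\in B_T$ supply $\tfrac{\partial U}{\partial s}$, $DU$, $D^2U$ with the quantitative bounds collected in Table~\ref{table:1}; I need these to be sufficient to legitimize (i) the chain rule \eqref{eq:2-560} for $\Phi$ along $m^{v,m,t}_\cdot$, which is genuinely an It\^o formula under the Girsanov-changed measure, and (ii) the Fubini-type interchanges between $\mathbb{E}^{v^{m,t}_\cdot(\cdot)}$, the $x$-integral against $m$, and the $s$-integral. The growth bound \eqref{eq:6-12} on $v^{m,t}$ combined with the quadratic growth on $l$, $F$, $F_T$, $h$ and the linear growth of $DU$ controls all integrands; Lemma~\ref{lem5-1} (adapted as in the proof of Theorem~\ref{theo6-1}'s preamble) guarantees that $M^{v^{m,t}_\cdot(x)}$ is a true martingale, making the change of measure rigorous. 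The equality case, when $v^{m,t}_s(x)=u^{m,t}_s(x)=\widehat{v}(s,X^{t}_s(x),DU(s,X^{t}_s(x),m^{m,t}_s))$, is exactly the attainment of the infimum in the definition of $H$, which recovers \eqref{eq:6-25} and completes the proof.
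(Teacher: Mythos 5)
Your proposal is correct and follows essentially the same route as the paper: reduce to the one-sided inequality $\Phi(t,m)\le J_{mt}(v^{m,t})$ via \eqref{eq:6-25}, apply the chain rule \eqref{eq:2-560} to $\Phi(s,m^{v,m,t}_s)$ under the Girsanov-changed measure, substitute the Bellman equation \eqref{eq:6-5} at $m=m^{v,m,t}_s$, and invoke the defining inequality of the Hamiltonian $H(s,\xi,p)\le l(s,\xi,v)+p\cdot v$ before integrating in $s$. The supporting points you flag (martingality of $M^{v^{m,t}_\cdot(x)}$, growth bounds legitimizing the interchanges) are exactly the ones the paper handles in the preamble to the theorem.
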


\begin{proof} 
In view of \eqref{eq:6-25}, we only have to prove that 
\begin{equation}\label{eq:6-26}
   \Phi(t,m)\le J_{mt}\left(v^{m,t}\right),\qquad \forall\;v^{m,t}_\cdot(\cdot)\in\mathcal{V}^{m,t}.
\end{equation}
We first write the Bellman equation \eqref{eq:6-5} with the argument $m=m^{v,m,t}_s$; and substitute $m=m^{v,m,t}_T$ in the terminal condition to obtain 
\begin{align*}
    &-\dfrac{\partial \Phi}{\partial s}\left(s,m^{v,m,t}_s\right)+\int_{\brn}\mathcal{A}\frac{d\Phi}{d\nu}\left(s,m^{v,m,t}_s\right)(\xi)dm^{v,m,t}_s(\xi)\\
    &\;\;\ \quad\qquad\qquad =\int_{\brn}H\left(s,\xi,D\dfrac{d\Phi}{d\nu}\left(s,m^{v,m,t}_s\right)(\xi)\right)dm^{v,m,t}_s(\xi)+F\left(m^{v,m,t}_s\right),\quad s\in[t,T);\\
    &\Phi\left(T,m^{v,m,t}_T\right)= \int_{\brn}h(\xi)dm^{v,m,t}_T(\xi)+F_{T}\left(m^{v,m,t}_T\right).
\end{align*}
They can also be rewritten as (from \eqref{eq:6-20}):
\begin{equation}\label{eq:6-28}
    \begin{aligned}
        &-\dfrac{\partial \Phi}{\partial s}\left(s,m^{v,m,t}_s\right)+\int_{\brn}\e \left[M^{v^{m,t}_\cdot(x)}_s\mathcal{A}\dfrac{d\Phi}{d\nu}\left(s,m^{v,m,t}_s\right)\left(X^{t}_s(x)\right)\right]dm(x)\\
        &\qquad =\int_{\brn}\e\left[M^{v^{m,t}_\cdot(x)}_sH\left(s,X^{t}_s(x),D\dfrac{d\Phi}{d\nu}\left(s,m^{v,m,t}_s\right)(X^{t}_s(x))\right)\right]dm(x)+F\left(m^{v,m,t}_s\right),\quad s\in[t,T);\\
        &\Phi\left(T,m^{v,m,t}_T\right)= \int_{\brn} \e  \left[M^{v^{m,t}_\cdot(x)}_T h\left(X^{t}_T(x)\right)\right]dm(x)+F_{T}\left(m^{v,m,t}_T\right).
    \end{aligned}
\end{equation}
On the other hand, from It\^o's formula in \eqref{eq:2-560}, we have
\begin{align}
    \dfrac{d}{ds}\left[\Phi\left(s,m^{v,m,t}_s\right)\right]=\ & \dfrac{\partial \Phi}{\partial s}\left(s,m^{v,m,t}_s\right)+\int_{\brn} \e \left[M^{v^{m,t}_\cdot(x)}_s\:D\dfrac{d\Phi}{d\nu}\left(s,m^{v,m,t}_s\right)\left(X^{t}_s(x)\right)\cdot v^{m,t}_s(x)\right]dm(x) \notag \\
    &-\int_{\brn}\e\left[M^{v^{m,t}_\cdot(x)}_s\mathcal{A}\dfrac{d\Phi}{d\nu}\left(s,m^{v,m,t}_s\right)\left(X^{t}_s(x)\right)\right]dm(x).  \label{eq:6-29}
\end{align}
Same as before, combining \eqref{eq:6-28} and \eqref{eq:6-29}, from the definition of the Hamiltonian $H$,  we can write 
\begin{align*}
    \dfrac{d}{ds}\left[\Phi\left(s,m^{v,m,t}_s\right)\right] =\ & \int_{\brn} \e \left[M^{v^{m,t}_\cdot(x)}_s\:D\dfrac{d\Phi}{d\nu}\left(s,m^{v,m,t}_s\right)\left(X^{t}_s(x)\right)\cdot v^{m,t}_s(x) \right] dm(x)\\
    &-\int_{\brn}\e\left[M^{v^{m,t}_\cdot(x)}_s H\left(s,X^{t}_s(x),D\dfrac{d\Phi}{d\nu}\left(s,m^{v,m,t}_s\right)\left(X^{t}_s(x)\right) \right)\right]dm(x)-F\left(m^{v,m,t}_s\right)\\
    \geq\ & -\int_{\brn}\e\left[ M^{v^{m,t}_\cdot(x)}_s l\left(s,X^{t}_s(x),v^{m,t}_s(x)\right)\right]dm(x)-F\left(m^{v,m,t}_s\right),
\end{align*}
where the last inequality is due to the fact that $\hat{v}(\cdot)$ is the minimizer of $H$. Integrating $s$ from $t$ to $T$ and using the condition at $T$ in \eqref{eq:6-28}, we obtain immediately \eqref{eq:6-26}, which completes the proof. 
\end{proof}

\subsection{Higher Order Regularities for Linear Functional Derivatives of $V^{m,t}(s,x)$}\label{subsec:add_regularity}

In the statements of Propositions~\ref{prop:6-1} and \ref{prop:6-2}, we demanded the existence of the derivatives 
\begin{align*}
    \dfrac{dV^{m,t}}{d\nu}(s,x)(z),\quad D_{z}\dfrac{dV^{m,t}}{d\nu}(s,x)(z),\quad D_{z}^{2}\dfrac{dV^{m,t}}{d\nu}(s,x)(z).
\end{align*}
To guarantee the well-definitiveness of these derivatives, we shall first derive the equation (see \eqref{eq:6-51} below) for $\dfrac{dV^{m,t}}{d\nu}(s,x)(z)$, which is new in the literature, and it can be obtained by taking the linear functional derivative of the HJB equation \eqref{eq:5-150} for $V^{m,t}$; and as a solution, its regularity can be inferred from that of the coefficient functions of the corresponding PDE. It can be studied by the methods developed in Section~\ref{sec:main} of this paper. 
To simplify the notation, we denote by 
\begin{align*}
    \overline{V}^{m,t}(s,x,z):=\dfrac{dV^{m,t}}{d\nu}(s,x)(z),\quad (s,x,z)\in[t,T]\times\brn\times\brn. 
\end{align*}
By differentiating the HJB equation \ref{eq:5-150} in $m$, we obtain the following equation for $\overline{V}^{m,t}(s,x)(z)$: 
\begin{equation}\label{eq:6-51}
    \left\{
    \begin{aligned}
        &-\dfrac{\partial \overline{V}^{m,t}}{\partial s}(s,x,z)+\mathcal{A}_{x}\overline{V}^{m,t}(s,x,z)=D_{p}H\left(s,x,DV^{m,t}(s,x)\right)\cdot D_{x}\overline{V}^{m,t}(s,x,z)\\
        &\quad\qquad\qquad\qquad\qquad\qquad\qquad\qquad\qquad +\dfrac{\partial}{\partial m}\dfrac{dF}{d\nu}\left(m^{m,t}_s\right)(x)(z),\quad s\in[t,T),\\
        &\overline{V}^{m,t}(T,x,z)=\dfrac{\partial}{\partial m}\dfrac{dF_{T}}{d\nu}\left(m^{m,t}_T\right)(x)(z),\quad (x,z)\in\brn\times\brn.
    \end{aligned}
    \right.
\end{equation}
As $V^{m,t}$ is already known now, Equation \eqref{eq:6-51} is a non-local and linear equation for $\overline{V}^{m,t}$; it has a similar form as the HJB equation \ref{eq:5-150}, except the two terms $\dfrac{\partial}{\partial m}\dfrac{dF}{d\nu}\left(m^{m,t}_s\right)(x)(z)$ and $\dfrac{\partial}{\partial m}\dfrac{dF_{T}}{d\nu}\left(m^{m,t}_T\right)(x)(z)$, which are functional derivatives of the respective maps $m \mapsto \dfrac{dF}{d\nu}\left(m^{m,t}_s\right)(x)$ and $m \mapsto \dfrac{dF_T}{d\nu}\left(m^{m,t}_T\right)(x)$ in the measure argument $m$. In the HJB equations \eqref{eq:5-150} for $V^{m,t}$, the distribution $m_s^{m,t}$ in the expressions of $\dfrac{dF}{d\nu}\left(m^{m,t}_s\right)(x)$ and $\dfrac{dF_T}{d\nu}\left(m^{m,t}_T\right)(x)$ is described by \eqref{eq:5-151}-\eqref{eq:5-152} which is uniquely characterized by $DV^{m,t}$. In a similar way, we shall next characterize $\dfrac{\partial}{\partial m}\dfrac{dF}{d\nu}\left(m^{m,t}_s\right)(x)(z)$ and $\dfrac{\partial}{\partial m}\dfrac{dF_{T}}{d\nu}\left(m^{m,t}_T\right)(x)(z)$  by $D\overline{V}^{m,t}$. 

For the characterization of $\dfrac{\partial}{\partial m}\dfrac{dF}{d\nu}\left(m^{m,t}_s\right)(x)(z)$ and $\dfrac{\partial}{\partial m}\dfrac{dF_{T}}{d\nu}\left(m^{m,t}_T\right)(x)(z)$ and the well-definitiveness of the function $\overline{V}^{m,t}(s,x)(z)$ (and also its derivatives in $z$, denoted by $D_z\overline{V}^{m,t}(s,x)(z)$ and $D_z^2\overline{V}^{m,t}(s,x)(z)$), we need the following assumptions, which are the regularity-enhanced version of previous assumptions on $F$ and $F_T$ from respective Assumptions~\ref{assumption:F} and \ref{assumption:H}). Indeed, the following Assumption~\ref{assumption:regularity}-(i) is to guarantee the well-definitiveness of $\overline{V}^{m,t}(s,x)(z)$ and $D_z\overline{V}^{m,t}(s,x)(z)$, while Assumption~\ref{assumption:regularity}-(ii) is to guarantee that of $D_z^2 \overline{V}^{m,t}(s,x)(z)$.

\begin{assumption}[Enhanced regularity]\label{assumption:regularity}
\text{ }\\
(i) For $\overline{V}^{m,t}(s,x)(z)$ and $D_z\overline{V}^{m,t}(s,x)(z)$: The derivatives
\begin{align*}
    D_{p}^{2}H(s,x,p),\quad D_xD_{p}H(s,x,p),\quad D_{x}^{2}H(s,x,p)
\end{align*}
exist, and they are continuous in $(s,x,p)$ and uniformly bounded. The twice linear functional derivatives $\dfrac{d^{2}F}{d\nu^{2}}(m)(x,z)$ and $\dfrac{d^{2}F_T}{d\nu^{2}}(m)(x,z)$ exist, and they are continuously differentiable in $z$, and the derivatives satisfy
\begin{align*}
    \left|D_z\frac{d^2F}{d\nu^2}(m)(x,z)\right| \le c (1+|x|),\quad \left|D_z\frac{d^2F_T}{d\nu^2}(m)(x,z)\right| \le c_T (1+|x|). 
\end{align*}
(ii) For $D_z^2 \overline{V}^{m,t}(s,x)(z)$: The derivatives 
\begin{align*}
    D_{p}^{3}H(s,x,p),\quad D_xD_{p}^{2}H(s,x,p),\quad D_x^2 D_{p}H(s,x,p),\quad D_x^3 H(s,x,p)
\end{align*}
exist, and they are continuous in $(s,x,p)$ and uniformly bounded. The derivatives $D_z^2\dfrac{d^{2}F}{d\nu^{2}}(m)(x,z)$ and $D_z^2\dfrac{d^{2}F_T}{d\nu^{2}}(m)(x,z)$ exist, and they are continuous and satisfy 
\begin{align*}
    \left|D_z^2\frac{d^2F}{d\nu^2}(m)(x,z)\right| \le c (1+|x|),\quad \left|D_z^2\frac{d^2F_T}{d\nu^2}(m)(x,z)\right| \le c_T (1+|x|). 
\end{align*}
\end{assumption}

We next introduce the Green function $G^{m,t}(\tau,z;s,\zeta)$ for $t\le \tau\le s$, which is the solution of the following equation: for $(s,\zeta)\in[t,T]\times\brn$,
\begin{equation}\label{eq:6-31}
\left\{
\begin{aligned}
    &-\dfrac{\partial G^{m,t}}{\partial\tau}(\tau,z;s,\zeta)+\mathcal{A}_{z}G^{m,t}(\tau,z;s,\zeta)=D_{z}G^{m,t}(\tau,z;s,\zeta)\cdot D_p H\left(\tau,z,DV^{m,t}(\tau,z)\right),\quad \tau\in[t,s),\\
    &G^{m,t}(s,z;s,\zeta)=\delta_0(z-\zeta),\quad z\in\brn.
\end{aligned}
\right.
\end{equation}
Again, $\delta_0$ is the Dirac function. We begin by giving the formulation of $\dfrac{\partial}{\partial m}\dfrac{dF}{d\nu}\left(m^{m,t}_s\right)(x)(z)$, and that of $\dfrac{\partial}{\partial m}\dfrac{dF_T}{d\nu}\left(m^{m,t}_T\right)(x)(z)$ can be obtained similarly. Under Assumption~\ref{assumption:regularity}-(i), we define the function
\begin{align}
    \Psi^{m,t,s}(x)(\tau,z):=\int_{\brn}G^{m,t}(\tau,z;s,\zeta)\dfrac{d^{2}F}{d\nu^{2}}\left(m^{m,t}_s\right)(x,\zeta)d\zeta, \quad t\le\tau\le s< T,\quad x,z\in\brn,\label{eq:6-32}
\end{align}
and then the function
\begin{align}
    \widetilde{\Psi}^{m,t,s}(x)(\tau,\xi)(z)=\int_{\tau}^{s}\int_{\brn}G^{m,t}(\tau,\xi;\theta,\zeta)\bigg(&D_{\zeta}\Psi^{m,t,s}(x)(\theta,\zeta) \notag \\
    &\cdot D_{p}^{2}H\left(\theta,\zeta,DV^{m,t}(\theta,\zeta)\right)D_{\zeta} \overline{V}^{m,t}(\theta,\zeta,z)\bigg)d\zeta d\theta. \label{eq:6-32'}
\end{align}
By simple computation, we know that the function $\Psi^{m,t,s}(x)(\tau,z)$ defined in \eqref{eq:6-32} satisfies the following linear equation: for $(s,x)\in[t,T)\times\brn$,
\begin{equation}\label{PDE:Psi}
\left\{
\begin{aligned}
    &-\dfrac{\partial \Psi^{m,t,s}}{\partial\tau}(x)(\tau,z) +\mathcal{A}_{z} \Psi^{m,t,s}(x)(\tau,z) \\
    &=D_{z}\Psi^{m,t,s}(x)(\tau,z) \cdot D_p H\left(\tau,z,DV^{m,t}(\tau,z)\right),\quad \tau\in[t,s),\\
    &\Psi^{m,t,s}(x)(s,z)=\dfrac{d^{2}F}{d\nu^{2}}\left(m^{m,t}_s\right)(x,z),\quad z\in\brn.
\end{aligned}
\right.
\end{equation}
We have the following result.

\begin{proposition}\label{prop:6-3} 
Under Assumption~\ref{assumption:regularity}-(i), the derivative $\dfrac{\partial}{\partial m}\dfrac{dF}{d\nu}\left(m^{m,t}_s\right)(x)(z)$ is given by the formula: 
\begin{align}
    \dfrac{\partial}{\partial m}\dfrac{dF}{d\nu}\left(m^{m,t}_s\right)(x)(z)=\ & \Psi^{m,t,s}(x)(t,z)+\int_{\brn}\widetilde{\Psi}^{m,t,s}(x)(t,\xi)(z)dm(\xi),\quad (s,x,z)\in[t,T]\times\brn\times\brn. \label{eq:6-33}
\end{align}
\end{proposition}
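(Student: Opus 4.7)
The plan is to compute $\dfrac{\partial}{\partial m}\dfrac{dF}{d\nu}(m^{m,t}_s)(x)(z)$ by the chain rule applied to the composition $m\mapsto m^{m,t}_s\mapsto \dfrac{dF}{d\nu}(m^{m,t}_s)(x)$, and then unravel the variation of $m\mapsto m^{m,t}_s$ using its characterization \eqref{eq:5-151}-\eqref{eq:5-152}. To first order in the perturbation of the base measure,
\begin{equation*}
    \dfrac{d}{d\epsilon}\bigg|_{\epsilon=0}\dfrac{dF}{d\nu}\left(m^{m,t}_s(m+\epsilon\delta m)\right)(x)=\int_{\brn}\dfrac{d^{2}F}{d\nu^{2}}\left(m^{m,t}_s\right)(x,\zeta)\,d\mu_{\delta m}(\zeta),
\end{equation*}
where $\mu_{\delta m}:=\partial_\epsilon m^{m,t}_s|_{\epsilon=0}$ is the signed measure encoding the first variation of the measure flow. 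The claim will thus follow once we identify this pairing with the right-hand side of \eqref{eq:6-33} for the specific test function $\varphi(\zeta)=\dfrac{d^{2}F}{d\nu^{2}}(m^{m,t}_s)(x,\zeta)$.

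For a generic continuous bounded $\varphi$, use \eqref{eq:5-151} to write $\int\varphi\,dm^{m,t}_s=\int\Psi^{m,t}_\varphi(t,z)\,dm(z)$, where $\Psi^{m,t}_\varphi$ solves the backward linear equation \eqref{eq:5-152} with terminal value $\varphi$. Differentiating this identity in $m$ produces two contributions: a direct term from the explicit factor $dm$, yielding $\Psi^{m,t}_\varphi(t,z_0)$ at the perturbation point $z_0$; and an indirect term reflecting the $m$-dependence of the drift $D_pH(\cdot,\cdot,DV^{m,t})$ of the backward PDE through $V^{m,t}$, yielding $\int_{\brn}\overline{\Psi}^{m,t}_\varphi(t,z)(z_0)\,dm(z)$, where $\overline{\Psi}^{m,t}_\varphi(\tau,z)(z_0):=\dfrac{d\Psi^{m,t}_\varphi(\tau,z)}{d\nu}(z_0)$.

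To pin down the indirect term, apply $\dfrac{d}{d\nu}(\cdot)(z_0)$ to the PDE \eqref{eq:5-152} for $\Psi^{m,t}_\varphi$. Since $\varphi$ is frozen at the base measure in the first-order expansion, the terminal condition vanishes, and the source is driven by the linearization $D_z\overline{V}^{m,t}(\tau,z,z_0)$ of $DV^{m,t}$:
\begin{equation*}
\left\{
\begin{aligned}
    &-\dfrac{\partial\overline{\Psi}^{m,t}_\varphi}{\partial\tau}(\tau,z)(z_0)+\mathcal{A}_z\overline{\Psi}^{m,t}_\varphi(\tau,z)(z_0)-D_z\overline{\Psi}^{m,t}_\varphi(\tau,z)(z_0)\cdot D_pH\left(\tau,z,DV^{m,t}(\tau,z)\right)\\
    &\quad =D_z\Psi^{m,t}_\varphi(\tau,z)\cdot D_p^{2}H\left(\tau,z,DV^{m,t}(\tau,z)\right)D_z\overline{V}^{m,t}(\tau,z,z_0),\quad \tau\in[t,s),\\
    &\overline{\Psi}^{m,t}_\varphi(s,z)(z_0)=0,\quad z\in\brn.
\end{aligned}
\right.
\end{equation*}
Since $G^{m,t}$ defined in \eqref{eq:6-31} is precisely the Green function of the homogeneous operator on the left-hand side, Duhamel's principle yields the spatial-temporal convolution representation
\begin{equation*}
    \overline{\Psi}^{m,t}_\varphi(\tau,\xi)(z_0)=\int_\tau^s\!\!\int_\brn G^{m,t}(\tau,\xi;\theta,\zeta)\,D_\zeta\Psi^{m,t}_\varphi(\theta,\zeta)\cdot D_p^{2}H\left(\theta,\zeta,DV^{m,t}(\theta,\zeta)\right)D_\zeta\overline{V}^{m,t}(\theta,\zeta,z_0)\,d\zeta\,d\theta.
\end{equation*}

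Finally, specialize $\varphi(\zeta)=\dfrac{d^{2}F}{d\nu^{2}}(m^{m,t}_s)(x,\zeta)$: then $\Psi^{m,t}_\varphi$ coincides with $\Psi^{m,t,s}(x)(\cdot,\cdot)$ of \eqref{eq:6-32}, being the unique solution of \eqref{PDE:Psi}, and the Duhamel integral becomes exactly $\widetilde{\Psi}^{m,t,s}(x)(t,\xi)(z_0)$ of \eqref{eq:6-32'}. Assembling the direct and indirect contributions with $z_0=z$ produces formula \eqref{eq:6-33}. The principal technical obstacle is to rigorously justify the chain-rule differentiation of $\Psi^{m,t}_\varphi$ in $m$ and the Duhamel representation: this requires well-posedness and sufficient spatial regularity of \eqref{eq:6-51} for $\overline{V}^{m,t}$ (with polynomial control of $D_z\overline{V}^{m,t}$), together with the existence, continuity, and adequate decay of $G^{m,t}$ so that the space-time integrals are absolutely convergent. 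Both rest on the enhanced bounds of Assumption~\ref{assumption:regularity}-(i) (uniform boundedness of $D_p^{2}H$, $D_xD_pH$, $D_x^{2}H$ and the linear growth of $D_z\dfrac{d^{2}F}{d\nu^{2}}$), combined with the a priori estimates and local H\"older continuity developed in Propositions~\ref{prop5-4}-\ref{prop5-5} and Theorem~\ref{theo5-1} applied to the linear backward parabolic equations \eqref{eq:6-51} and \eqref{PDE:Psi}.
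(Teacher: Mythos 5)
Your proposal is correct and follows essentially the same route as the paper's proof: the derivative is split into the direct contribution $\Psi^{m,t,s}(x)(t,z)$ from the explicit $dm$ in the representation \eqref{eq:5-151} and the indirect contribution from the $m$-dependence of the drift $D_pH(\cdot,\cdot,DV^{m,t})$ in \eqref{eq:5-152}, with the latter obtained by linearizing the $\Psi$-equation and expressing the resulting zero-terminal-data linear PDE via the Green function $G^{m,t}$ (Duhamel), exactly yielding $\widetilde{\Psi}^{m,t,s}$. The only presentational difference is that the paper carries out the computation with explicit $\epsilon$-difference quotients (the interpolated terminal datum $F^{\epsilon}$ and the quotient $\widetilde{G}^{\epsilon}$ of Green functions) and passes to the limit, whereas you differentiate formally and defer the justification to the \emph{a priori} estimates — the same estimates the paper itself invokes.
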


\begin{proof}
For $m,m'\in\pr_2(\brn)$ and any $\epsilon\in(0,1)$, we denote by 
\begin{equation*}\label{eq:6-34}
    \de^{\epsilon}(s,x):=\dfrac{1}{\epsilon}\left[\dfrac{dF}{d\nu}\left(m^{m+\epsilon(m'-m),t}_s(x)\right)-\dfrac{dF}{d\nu}\left(m^{m,t}_s(x)\right)\right], 
\end{equation*}
and set
\begin{equation*}\label{eq:6-35}
    F^{\epsilon}(x)(s,z):=\int_{0}^{1}\dfrac{d^{2}F}{d\nu^{2}}\left(m^{m,t}_s+\lambda\left(m^{m+\epsilon(m'-m),t}_s-m^{m,t}_s\right)\right)(x,z)d\lambda.
\end{equation*}
Then, by the fundamental theorem of calculus, we can write that 
\begin{equation*}\label{eq:6-36}
    \de^{\epsilon}(s,x)=\dfrac{1}{\epsilon}\left[\int_{\brn}F^{\epsilon}(x)(s,z)d\left(m^{m+\epsilon(m'-m),t}_s(z)\right) -\int_{\brn}F^{\epsilon}(x)(s,z)dm^{m,t}_s(z)\right]. 
\end{equation*}
We introduce 
\begin{equation}\label{eq:6-37}
    \begin{aligned}
        \Psi_{s}^{\epsilon,1}(x)(\tau,z):=\ & \int_{\brn}G^{m+\epsilon(m'-m),t}(\tau,z;s,\zeta)F^{\epsilon}(x)(s,\zeta)d\zeta,\\
        \Psi_{s}^{\epsilon,2}(x)(\tau,z):=\ & \int_{\brn}G^{m,t}(\tau,z;s,\zeta)F^{\epsilon}(x)(s,\zeta)d\zeta,
    \end{aligned}
\end{equation}
which are the solutions of the following respective equations:
\begin{equation*}
\left\{
\begin{aligned}
    &-\dfrac{\partial \Psi^{\epsilon,1}}{\partial\tau}(x)(\tau,z) +\mathcal{A}_{z} \Psi^{\epsilon,1}(x)(\tau,z) \\
    &\qquad =D_{z}\Psi^{\epsilon,1}(x)(\tau,z) \cdot D_p H\left(\tau,z,DV^{m+\epsilon(m'-m),t}(\tau,z)\right),\quad \tau\in[t,s),\\
    &\Psi^{\epsilon,1}(x)(s,z)=F^{\epsilon}(x)(s,z),\quad z\in\brn,
\end{aligned}
\right.
\end{equation*}
and
\begin{equation*}
\left\{
\begin{aligned}
    &-\dfrac{\partial \Psi^{\epsilon,2}}{\partial\tau}(x)(\tau,z) +\mathcal{A}_{z} \Psi^{\epsilon,2}(x)(\tau,z) =D_{z}\Psi^{\epsilon,2}(x)(\tau,z) \cdot D_p H\left(\tau,z,DV^{m,t}(\tau,z)\right),\quad \tau\in[t,s),\\
    &\Psi^{\epsilon,1}(x)(s,z)=F^{\epsilon}(x)(s,z),\quad z\in\brn.
\end{aligned}
\right.
\end{equation*}
Then, from the responding characterization of $m^{m,t}_s$ in \eqref{eq:5-151}-\eqref{eq:5-152}, we see immediately that
\begin{align}
    \de^{\epsilon}(s,x)=\ & \dfrac{1}{\epsilon}\left[\int_{\brn}\Psi_{s}^{\epsilon,1}(x)(t,z)\left[dm(z)+\epsilon(dm'(z)-dm(z))\right]-\int_{\brn}\Psi_{s}^{\epsilon,2}(x)(t,z)dm(z)\right] \notag \\
    =\ & \int_{\brn}\Psi_{s}^{\epsilon,1}(x)(t,z)\left(dm'(z)-dm(z)\right)+\int_{\brn}\dfrac{\Psi_{s}^{\epsilon,1}(x)(t,\xi)-\Psi_{s}^{\epsilon,2}(x)(t,\xi)}{\epsilon}dm(\xi). \label{eq:6-38}
\end{align}
From \eqref{eq:6-32} and \eqref{eq:6-37}, it is clear that $\Psi_{s}^{\epsilon,1}(x)(\tau,z)$ and $\Psi_{s}^{\epsilon,2}(x)(\tau,z)$ converge to $\Psi^{m,t,s}(x)(\tau,z)$ as $\epsilon$ goes to $0$. We next denote by
\begin{equation*}\label{eq:6-40}
    \Psi_{s}^{\epsilon}(x)(\tau,\xi):=\dfrac{\Psi_{s}^{\epsilon,1}(x)(\tau,\xi)-\Psi_{s}^{\epsilon,2}(x)(\tau,\xi)}{\epsilon},
\end{equation*}
and set 
\begin{equation*}\label{eq:6-41}
    \widetilde{G}^{\epsilon}(\tau,\xi;s,\zeta):=\dfrac{G^{m+\epsilon(m'-m),t}(\tau,\xi;s,\zeta)-G^{m,t}(\tau,\xi;s,\zeta))}{\epsilon}.
\end{equation*}
Then, from \eqref{eq:6-37}, we can write that 
\begin{equation}\label{eq:6-42}
    \Psi_{s}^{\epsilon}(x)(\tau,\xi)=\int_{\brn}\widetilde{G}^{\epsilon}(\tau,\xi;s,\zeta)F^{\epsilon}(x)(s,\zeta)d\zeta,
\end{equation}
and \eqref{eq:6-38} can be written as
\begin{align}
    \de^{\epsilon}(s,x)=\ & \int_{\brn}\Psi_{s}^{\epsilon,1}(x)(t,z)\left(dm'(z)-dm(z)\right)+\int_{\brn} \Psi_{s}^{\epsilon}(x)(t,\xi) dm(\xi). \label{eq:6-38'}
\end{align}
From Equation \ref{eq:6-31} for $G^{m,t}$, the fundamental theorem of calculus and the chain rule, we can write the following relations 
\begin{align*}
    &-\dfrac{\partial \widetilde{G}^{\epsilon}}{\partial\tau}(\tau,\xi;s,\zeta)+\mathcal{A}_{\xi}\widetilde{G}^{\epsilon}(\xi,\tau;\zeta,s)\\
    =\ & \dfrac{1}{\epsilon}\bigg[D_{\xi}G^{m+\epsilon(m'-m),t}(\tau,\xi;s,\zeta)\cdot D_p H\left(\tau,\xi,DV^{m+\epsilon(m'-m),t}(\tau,\xi)\right)\\
    &\quad -D_{\xi}G^{m,t}(\tau,\xi;s,\zeta)\cdot D_pH \left(\tau,\xi,DV^{m,t}(\tau,\xi)\right)\bigg]\\
    =\ & D_{\xi}\widetilde{G}^{\epsilon}(\tau,\xi;s,\zeta)\cdot D_pH \left(\tau,\xi,DV^{m+\epsilon(m'-m),t}(\tau,\xi)\right)\\
    &+D_{\xi}G^{m,t}(\xi,\tau;\zeta,s)\cdot \dfrac{D_p H \left(\tau,\xi,DV^{m+\epsilon(m'-m),t}(\tau,\xi)\right)-D_p H \left(\tau,\xi,DV^{m,t}(\tau,\xi)\right)}{\epsilon}\\
    =\ & D_{\xi}\widetilde{G}^{\epsilon}(\tau,\xi;s,\zeta)\cdot D_pH \left(\tau,\xi,DV^{m+\epsilon(m'-m),t}(\tau,\xi)\right)\\
    &+D_{\xi}G^{m,t}(\tau,\xi;s,\zeta)\cdot \int_{0}^{1}D_{p}^{2}H\left(\tau,\xi,\left(DV^{m,t}+\lambda(DV^{m+\epsilon(m'-m),t}-DV^{m,t})\right)(\tau,\xi) \right)d\lambda\\
    &\;\;\qquad\qquad\qquad\qquad \cdot \int_{\brn}\int_{0}^{1}D_\xi\overline{V}^{m+\epsilon\mu(m'-m),t}(\tau,\xi,z)d\mu\,(dm'(z)-dm(z)),\quad \tau\in[t,s),
\end{align*}
and the terminal condition $\widetilde{G}^{\epsilon}(s,\xi;s,\zeta)=0$. Then, from \eqref{eq:6-42} and the second equation of \eqref{eq:6-37}, by using the integration-by-parts, we see that $\Psi_{s}^{\epsilon}(x)(\tau,\xi)$ is the solution of the following equation
\begin{align*}
    &-\dfrac{\partial \Psi_{s}^{\epsilon}}{\partial\tau}(x)(\tau,\xi)+\mathcal{A}_{\xi}\Psi_{s}^{\epsilon}(x)(\tau,\xi)\\
    =\ & D_{\xi}\Psi_{s}^{\epsilon}(x)(\tau,\xi)\cdot D_p H \left(\tau,\xi,DV^{m+\epsilon(m'-m),t}(\tau,\xi) \right)\\
    &+ D_{\xi}\Psi_{s}^{\epsilon,2}(x)(\tau,\xi)\cdot \int_{0}^{1}D_{p}^{2}H\left(\tau,\xi, \left(DV^{m,t}+\lambda(DV^{m+\epsilon(m'-m),t}-DV^{m,t})\right)(\tau,\xi) \right) d\lambda \\
    &\ \ \quad\qquad\qquad\qquad \cdot \int_{\brn}\int_{0}^{1} D_\xi\overline{V}^{m+\epsilon\mu(m'-m),t}(\tau,\xi,z)d\mu\,(dm'(z)-dm(z)),\quad \tau\in[t,s),
\end{align*}
with the terminal condition $\Psi_{s}^{\epsilon}(x)(s,\xi)=0$.  We pass the last equation to the limit as $\epsilon\rightarrow0$ to obtain the following equation for a function $\widetilde{\Psi}_s(x)(\tau,\xi)$: 
\begin{align*}
    &-\dfrac{\partial \widetilde{\Psi}_s}{\partial\tau}(x)(\tau,\xi)+\mathcal{A}_{\xi}\widetilde{\Psi}_s(x)(\tau,\xi) - D_{\xi}\widetilde{\Psi}_s(x)(\tau,\xi)\cdot D_p H \left(\tau,\xi,DV^{m,t}(\tau,\xi) \right)\\
    =\ & D_{\xi}\Psi^{m,t,s}(x)(\tau,\xi)\cdot  D_{p}^{2}H\left(\tau,\xi, DV^{m,t}(\tau,\xi) \right)  \int_{\brn} D_\xi\overline{V}^{m,t}(\tau,\xi,z)(dm'(z)-dm(z)),\quad \tau\in[t,s),
\end{align*}
with the terminal condition $\widetilde{\Psi}_s(x)(s,\xi)=0$, and its solution $\widetilde{\Psi}_s(x)(\tau,\xi)$ is the limit of $\Psi_{s}^{\epsilon}(x)(\tau,\xi)$ as $\epsilon$ goes to $0$. Therefore, from \eqref{eq:6-31}, we know that $\widetilde{\Psi}_s(x)(\tau,\xi)$ can be written as
\begin{equation*}\label{eq:6-48}
\begin{aligned}
    \widetilde{\Psi}_{s}(x)(\tau,\xi)=\int_{\tau}^{s} \int_\brn &G^{m,t}(\tau,\xi;\theta,\zeta)D_{\zeta}\Psi^{m,t,s}(x)(\theta,\zeta) \\
    &\cdot D_{p}^{2}H\left(\theta,\zeta,DV^{m,t}(\theta,\zeta)\right)\int_{\brn}D_{\zeta}\overline{V}^{m,t}(\theta,\zeta,z)(dm'(z)-dm(z)) d\zeta d\theta,
\end{aligned}
\end{equation*}
and then, from the definition of the function $\widetilde{\Psi}^{m,t,s}(x)(\tau,\xi)(z)$ in Equation \eqref{eq:6-32'}, we know that 
\begin{align}\label{add-1}
    \widetilde{\Psi}_{s}(x)(\tau,\xi)=\int_{\brn}\widetilde{\Psi}^{m,t,s}(x)(\tau,\xi)(z)(dm'(z)-dm(z)).
\end{align}
From approaches and the method of majorants used in Section~\ref{sec:main}, we have the boundedness or growth estimates of the functions $\Psi_{s}^{\epsilon,1}$ and $\Psi_{s}^{\epsilon}$. Then, from \eqref{eq:6-38'} and \eqref{add-1} and Lebesgue's dominated convergence theorem, we deduce that
\begin{align*}
    \lim_{\epsilon\to 0}\de^{\epsilon}(s,x)=\ & \int_{\brn} \lim_{\epsilon\to 0} \Psi_{s}^{\epsilon,1}(x)(t,z)\left(dm'(z)-dm(z)\right)+\int_{\brn} \lim_{\epsilon\to 0}\Psi_{s}^{\epsilon}(x)(t,\xi) dm(\xi)\\
    =\ & \int_{\brn} \Psi^{m,t,s}(x)(t,z)(dm'(z)-dm(z))+ \int_{\brn} \widetilde{\Psi}_{s}(x)(t,\xi) dm(\xi)\\
    =\ & \int_{\brn}\left(\Psi^{m,t,s}(x)(t,z)+\int_{\brn}\widetilde{\Psi}^{m,t,s}(x)(t,\xi)(z)dm(\xi)\right)(dm'(z)-dm(z)),
\end{align*}
from which we obtain \eqref{eq:6-33}. 
\end{proof}

Similar to the proof of Proposition~\ref{prop:6-3}, we can also obtain the representation for the term $\dfrac{\partial}{\partial m}\dfrac{d F_{T}}{d\nu}\left(m^{m,t}_T\right)(x)(z)$ as
\begin{align}
    \dfrac{\partial}{\partial m}\dfrac{dF_T}{d\nu}\left(m^{m,t}_T\right)(x)(z)=\ & \Psi^{m,t}_T(x)(t,z)+\int_{\brn}\widetilde{\Psi}^{m,t}_T(x)(t,\xi)(z)dm(\xi), \label{eq:6-33'}
\end{align}
where 
\begin{align}
    \Psi^{m,t}_T(x)(\tau,z):=\ & \int_{\brn}G^{m,t}(\tau,z;T,\zeta)\dfrac{d^{2}F_T}{d\nu^{2}}\left(m^{m,t}_T\right)(x,\zeta)d\zeta, \quad t\le\tau\le T,\quad x,z\in\brn,\label{eq:6-32''} \\
    \widetilde{\Psi}^{m,t}_T(x)(\tau,\xi)(z):=\ & \int_{\tau}^{T}\int_{\brn}G^{m,t}(\tau,\xi;\theta,\zeta)D_{\zeta}\Psi^{m,t}_T(x)(\theta,\zeta) \notag \\
    &\quad\qquad \cdot D_{p}^{2}H\left(\theta,\zeta,DV^{m,t}(\theta,\zeta)\right)D_{\zeta} \overline{V}^{m,t}(\theta,\zeta,z)d\zeta d\theta. \label{eq:6-32'''}
\end{align}
Similar as \eqref{PDE:Psi}, the function $\Psi^{m,t}_T(x)(\tau,z)$ defined in \eqref{eq:6-32''} satisfies the following equation: 
\begin{equation}\label{PDE:Psi_T}
\left\{
\begin{aligned}
    &-\dfrac{\partial \Psi^{m,t}_T}{\partial\tau}(x)(\tau,z) +\mathcal{A}_{z} \Psi^{m,t}_T(x)(\tau,z) =D_{z}\Psi^{m,t}_T(x)(\tau,z) \cdot D_p H\left(\tau,z,DV^{m,t}(\tau,z)\right),\quad \tau\in[t,T),\\
    &\Psi^{m,t}_T(x)(s,z)=\dfrac{d^{2}F_T}{d\nu^{2}}\left(m^{m,t}_T\right)(x,z),\quad z\in\brn.
\end{aligned}
\right.
\end{equation}
We can now obtain the equation of $\overline{V}_{mt}(s,x)(z)$, given by \eqref{eq:6-51}, in terms of $\dfrac{\partial}{\partial m}\dfrac{dF}{d\nu}\left(m^{m,t}_s\right)(x)(z)$ (see \eqref{eq:6-33}) and $\dfrac{\partial}{\partial m}\dfrac{dF_{T}}{d\nu}\left(m^{m,t}_T\right)(x)(z)$ (see \eqref{eq:6-33'}), with which we can give the well-posedness result for $\overline{V}^{m,t}$.

\begin{theorem}\label{thm:D_zV}
    Under Assumptions~\ref{eq:5-8}, \ref{assumption:h}, \ref{assumption:F}, \ref{assumption:H} and \ref{assumption:regularity}-(i), Equation \eqref{eq:6-51} is well-posed and it has a solution $\overline{V}^{m,t}$ in $L^{2}\left(t,T;H_{\pi_{\gamma}}^{1}(\brn)\right)$ when the interval $T-t$ is small; indeed, this solution is the linear functional derivative of $V^{m,t}$, the solution of the HJB equation \eqref{eq:5-150}. The same equation \eqref{eq:6-51} has a global-in-time solution under Assumption~\ref{assumption:convex}.
\end{theorem}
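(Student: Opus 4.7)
The plan is to treat equation \eqref{eq:6-51} as a linear parabolic equation in $(s,x)$ for $\overline{V}^{m,t}(s,x,z)$ with $z$ as a spectator parameter, in which the source and terminal data are themselves non-local functionals of $D_z\overline{V}^{m,t}$ via the representations \eqref{eq:6-33}--\eqref{eq:6-33'}. Since Theorem~\ref{theo5-2} (resp.\ Theorem~\ref{theo5-3}) has already produced $V^{m,t}$, and this determines the drift $g^{m,t}(s,x):=D_pH\!\left(s,x,DV^{m,t}(s,x)\right)$ (with $|g^{m,t}(s,x)|\le C(T)(1+|x|)$), the Green function $G^{m,t}$ of \eqref{eq:6-31}, and the measure flow $m^{m,t}_s$, I would set up a Schauder fixed-point argument on the closed set $\mathbf{B}_T$ of functions $\overline{W}(\cdot,\cdot,z)$ whose $(s,x)$-sections lie in the set $B_T$ of Subsection~\ref{subsec:main} with constants uniform in $z$. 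The map $\Theta$ sends $\overline{W}$ to the solution $\overline{V}$ of the linear equation \eqref{eq:6-51} with RHS and terminal recomputed from $D_z\overline{W}$ through \eqref{eq:6-33}--\eqref{eq:6-33'}.

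The \textit{a priori} estimates establishing $\Theta(\mathbf{B}_T)\subset \mathbf{B}_T$ would proceed in two layers. First, under Assumption~\ref{assumption:regularity}-(i), the auxiliary functions $\Psi^{m,t,s}(x)(\tau,z)$ and $\Psi^{m,t}_T(x)(\tau,z)$ defined in \eqref{PDE:Psi} and \eqref{PDE:Psi_T} satisfy linear parabolic equations of the form \eqref{eq:5-101}; Proposition~\ref{prop5-5} and Theorem~\ref{theo5-1} then supply quadratic growth in $x$, weighted $L^2$-bounds on their $z$-gradients, and local H\"older regularity, all with constants controlled via the $c(1+|x|)$ bounds in Assumption~\ref{assumption:regularity}-(i). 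Second, with $\overline{W}\in \mathbf{B}_T$ frozen, the non-local integrals \eqref{eq:6-32'} and \eqref{eq:6-32'''} are linear in $D_\zeta\overline{W}$; combined with the boundedness of $D_p^2H$, this places the total source $\ell(s,x;z)$ and terminal $\ell_T(x;z)$ inside the structural class \eqref{eq:5-140}. Applying Propositions~\ref{prop5-1}, \ref{prop5-3}, \ref{prop5-4}, \ref{prop:5-5} and \ref{prop5-5} together with Theorem~\ref{theo5-1} then reproduces all the estimates of Table~\ref{table:1} for $\Theta(\overline{W})$, under a Riccati-type smallness condition on $T-t$ analogous to \eqref{eq:5-14} and \eqref{eq:5-206}.

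Continuity of $\Theta$ in the $L^2\!\left(t,T;H^1_{\pi_\gamma}(\brn)\right)$ topology would follow the template of the continuity step inside the proof of Theorem~\ref{theo5-2}: a weakly convergent sequence in $\mathbf{B}_T$ admits, by Arzel\`a--Ascoli applied to the uniform growth and H\"older bounds, a subsequence converging pointwise; dominated convergence together with the energy identity \eqref{eq:5-180''} then yields strong convergence of the $\Psi$-integrals and of the source data, so that $\Theta(\overline{W}^h)\to \Theta(\overline{W})$ in the required norm. Schauder's theorem then produces a fixed point $\overline{V}^{m,t}$. To identify this fixed point with $\dfrac{dV^{m,t}}{d\nu}(s,x)(z)$, I would form the difference quotient $\varepsilon^{-1}\!\left[V^{m+\varepsilon(m'-m),t}-V^{m,t}\right]$, subtract the two HJB equations \eqref{eq:5-150}, divide by $\varepsilon$, and pass to $\varepsilon\downarrow 0$ using Proposition~\ref{prop:6-3} to handle the source increments, then invoke uniqueness of the linear equation \eqref{eq:6-51}.

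The global-in-time statement under Assumption~\ref{assumption:convex} is then comparatively soft: Theorem~\ref{theo5-3} already delivers $V^{m,t}$ on the full interval with $|DV^{m,t}(s,x)|\le C(\lambda,T)(1+|x|)$, so $g^{m,t}$ and $G^{m,t}$ are globally available, and because \eqref{eq:6-51} is genuinely \emph{linear} in $\overline{V}^{m,t}$, the small-time limitation from the contraction step can be removed either by iterating the local construction across a finite partition of $[t,T]$ or by running the energy estimates of Subsection~\ref{subsec:V_x_i} directly on $[t,T]$ without appealing to the majorant Riccati conditions. The main obstacle throughout is tracking how the parameter $z$ propagates through the non-local terms \eqref{eq:6-32'} and \eqref{eq:6-32'''}: the estimates of Table~\ref{table:1} must be shown \emph{with explicit, controllable $z$-dependence}, and this rests crucially on the $c(1+|x|)$-growth of $D_z\frac{d^2F}{d\nu^2}$ and $D_z\frac{d^2F_T}{d\nu^2}$ postulated in Assumption~\ref{assumption:regularity}-(i), which in turn governs the $z$-derivative of the source in the analog of \eqref{eq:5-101} satisfied by $D_z\overline{V}^{m,t}$.
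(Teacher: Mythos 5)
Your proposal follows essentially the same route as the paper: the paper's own (very schematic) proof is precisely this Schauder fixed-point scheme on $L^{2}\left(t,T;H_{\pi_{\gamma}}^{1}(\brn)\right)$, with the source and terminal data recomputed from the input through \eqref{eq:6-33}--\eqref{eq:6-33'}, the \textit{a priori} estimates borrowed from Subsections~\ref{subsec:priori:V}--\ref{subsec:V_x_i}, and the identification with $\dfrac{dV^{m,t}}{d\nu}$ obtained from the difference-quotient analysis underlying Proposition~\ref{prop:6-3} together with uniqueness. Two small corrections: the non-local terms \eqref{eq:6-32'} and \eqref{eq:6-32'''} involve $D_{\zeta}\overline{V}^{m,t}(\theta,\zeta,z)$, the gradient in the state variable rather than $D_z\overline{V}^{m,t}$ as written in your opening paragraph (your second paragraph has it right), and since \eqref{eq:6-51} is linear in $\overline{V}^{m,t}$ the relevant estimates are those of Proposition~\ref{prop5-5} and Theorem~\ref{theo5-1} for the generic equation \eqref{eq:5-101}, not the Riccati majorants of Propositions~\ref{prop5-1} and \ref{prop5-3}; the smallness of $T-t$ enters only through the solvability of $V^{m,t}$ and the closing of the fixed point.
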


Theorem~\ref{thm:D_zV} gives the well-posedness of linear functional derivative $\dfrac{dV^{m,t}}{d\nu}(s,x)(z)$ of $V^{m,t}(s,x)$, which is also the linear functional derivative $\frac{dU}{d\nu}(s,x,m)(z)$ of the function $U$ defined in \eqref{def:U}. Therefore, this result guarantees the first condition in Condition~\ref{Condition:U}.

\begin{proof}
The proof of the well-posedness for Equation \eqref{eq:6-51} follows by a similar approach developed in Section~\ref{sec:main} leading to the well-posedness of the HJB equation \eqref{eq:5-150}. Indeed, a solution $\overline{V}^{m,t}$ can be considered as a fixed point of a map from $L^{2}\left(t,T;H_{\pi_{\gamma}}^{1}(\brn)\right)$ to $L^{2}\left(t,T;H_{\pi_{\gamma}}^{1}(\brn)\right)$: for any function $\overline{V}^{input}\in L^{2}\left(t,T;H_{\pi_{\gamma}}^{1}(\brn)\right)$, we define $\overline{V}^{output}(s,x)$ by the linear equation 
\begin{equation*}
    \left\{
    \begin{aligned}
        &-\dfrac{\partial \overline{V}^{output}}{\partial s}(s,x,z)+\mathcal{A}_{x}\overline{V}^{output}(s,x,z)=D_{p}H\left(s,x,DV^{output}(s,x)\right)\cdot D_{x}\overline{V}^{output}(s,x,z)\\
        &\qquad\qquad\qquad\qquad\qquad\qquad\qquad\qquad\qquad +\overline{F}(s,x,z),\quad s\in[t,T),\\
        &\overline{V}^{output}(T,x,z)=\overline{F}_T(s,x,z),\quad (x,z)\in\brn\times\brn;
    \end{aligned}
    \right.
\end{equation*}
where the functions $\overline{F}(s,x,z)$ and $\overline{F}_T(s,x,z)$ are defined by \eqref{eq:6-33} and \eqref{eq:6-33'} respectively with $\overline{V}^{m,t}$ in the respective expressions of \eqref{eq:6-32'} and \eqref{eq:6-32'''} being replaced by $\overline{V}^{input}$. If $\overline{V}^{input}=\overline{V}^{output}$, then it gives a solution $\overline{V}^{m,t}$ of \eqref{eq:6-51}. The proof is based on \textit{a priori} estimates corresponding to those for Equation \eqref{eq:6-51}, exactly the same as what we have done in Subsections~\ref{subsec:priori:V}-\ref{subsec:V_x_i}, we shall not repeat and we simply omit them. Then, by following a similar approach as the proof of Proposition~\ref{prop:6-2} and using the uniqueness result of HJB equation \eqref{eq:5-150} (see Subsubsection~\ref{subsubsec:global:|V|}), we see that the unique solution $\overline{V}^{m,t}$ is the linear functional derivative of $V^{m,t}$. 
\end{proof}

Furthermore, we can also move on to study the equation for $D_z\overline{V}^{m,t}(s,x,z)$ as follows:
\begin{equation}\label{equation:D_zV}
    \left\{
    \begin{aligned}
        &-\dfrac{\partial \left(D_z\overline{V}^{m,t}\right)}{\partial s}(s,x,z)+\mathcal{A}_{x}\left(D_z\overline{V}^{m,t}\right)(s,x,z)\\
        &\qquad =D_{p}H\left(s,x,DV^{m,t}(s,x)\right)\cdot D_{x}\left(D_z\overline{V}^{m,t}\right)(s,x,z)+D_z\dfrac{\partial}{\partial m}\dfrac{dF}{d\nu}\left(m^{m,t}_s\right)(x)(z),\quad s\in[t,T),\\
        &D_z\overline{V}^{m,t}(T,x,z)=D_z\dfrac{\partial}{\partial m}\dfrac{dF_{T}}{d\nu}\left(m^{m,t}_T\right)(x)(z),\quad (x,z)\in\brn\times\brn,
    \end{aligned}
    \right.
\end{equation}
where the functions $D_z\dfrac{\partial}{\partial m}\dfrac{dF}{d\nu}\left(m^{m,t}_s\right)(x)(z)$ and $D_z\dfrac{\partial}{\partial m}\dfrac{dF_{T}}{d\nu}\left(m^{m,t}_T\right)(x)(z),\  (x,z)\in\brn\times\brn$, have representations 
as follows: 
\begin{align*}
    &D_z\dfrac{\partial}{\partial m}\dfrac{dF}{d\nu}\left(m^{m,t}_s\right)(x)(z)\\
    =\ & D_z\Psi^{m,t,s}(x)(t,z)+\int_{\brn}\int_{t}^{s}\int_{\brn}G^{m,t}(t,\xi;\tau,\zeta) D_{\zeta}\Psi^{m,t,s}(x)(\tau,\zeta)\\
    &\qquad\qquad\qquad\qquad \cdot D_{p}^{2}H\left(\tau,\zeta,DV^{m,t}(\tau,\zeta)\right)D_{\zeta}\left(D_z\overline{V}^{m,t}\right)(\tau,\zeta,z) d\zeta d\tau dm(\xi),\\
    &D_z\dfrac{\partial}{\partial m}\dfrac{dF_T}{d\nu}\left(m^{m,t}_T\right)(x)(z)\\
    =\ & D_z\Psi^{m,t}_T(x)(t,z)+\int_{\brn}\int_{t}^{T}\int_{\brn}G^{m,t}(t,\xi;\tau,\zeta) D_{\zeta}\Psi^{m,t}_T(x)(\tau,\zeta)\\
    &\qquad\qquad\qquad\qquad \cdot D_{p}^{2}H\left(\tau,\zeta,DV^{m,t}(\tau,\zeta)\right)D_{\zeta}\left(D_z\overline{V}^{m,t}\right)(\tau,\zeta,z) d\zeta d\tau dm(\xi).
\end{align*}
The study of the well-posedness of Equation \eqref{equation:D_zV} can give the well-posedness of the derivative $D_z\dfrac{dV^{m,t}}{d\nu}(s,x)(z)$, which is also the derivative $D_z\frac{dU}{d\nu}(s,x,m)(z)$ of the function $U$ defined in \eqref{def:U}. Particularly, the validity of the following Proposition~\ref{thm:D_zV'} for $D_z\overline{V}^{m,t}(s,x,z)$ can guarantee the second condition in Condition~\ref{Condition:U}. 

Finally, the equation for $D_z^2\overline{V}^{m,t}(s,x,z)$ is as follows:
\begin{equation}\label{equation:D_z^2V}
    \left\{
    \begin{aligned}
        &-\dfrac{\partial \left(D_z^2\overline{V}^{m,t}\right)}{\partial s}(s,x,z)+\mathcal{A}_{x}\left(D_z^2\overline{V}^{m,t}\right)(s,x,z)\\
        &\qquad =D_{p}H\left(s,x,DV^{m,t}(s,x)\right)\cdot D_{x}\left(D_z^2\overline{V}^{m,t}\right)(s,x,z) +D_z^2\dfrac{\partial}{\partial m}\dfrac{dF}{d\nu}\left(m^{m,t}_s\right)(x)(z),\quad s\in[t,T),\\
        &D_z^2\overline{V}^{m,t}(T,x,z)=D_z^2 \dfrac{\partial}{\partial m}\dfrac{dF_{T}}{d\nu}\left(m^{m,t}_T\right)(x)(z),\quad (x,z)\in\brn\times\brn,
    \end{aligned}
    \right.
\end{equation}
where the functions $D_z^2\dfrac{\partial}{\partial m}\dfrac{dF}{d\nu}\left(m^{m,t}_s\right)(x)(z)$ and $D_z^2\dfrac{\partial}{\partial m}\dfrac{dF_{T}}{d\nu}\left(m^{m,t}_T\right)(x)(z),\  (x,z)\in\brn\times\brn$, have representations as follows: 
\begin{align*}
    &D_z^2\dfrac{\partial}{\partial m}\dfrac{dF}{d\nu}\left(m^{m,t}_s\right)(x)(z)\\
    =\ & D_z^2\Psi^{m,t,s}(x)(t,z)+\int_{\brn}\int_{t}^{s}\int_{\brn}G^{m,t}(t,\xi;\tau,\zeta) D_{\zeta}\Psi^{m,t,s}(x)(\tau,\zeta)\\
    &\qquad\qquad\qquad\qquad \cdot D_{p}^{2}H\left(\tau,\zeta,DV^{m,t}(\tau,\zeta)\right)D_{\zeta}\left(D_z^2\overline{V}^{m,t}\right)(\tau,\zeta,z) d\zeta d\tau dm(\xi),\\
    &D_z^2\dfrac{\partial}{\partial m}\dfrac{dF_T}{d\nu}\left(m^{m,t}_T\right)(x)(z)\\
    =\ & D_z^2\Psi^{m,t}_T(x)(t,z)+\int_{\brn}\int_{t}^{T}\int_{\brn}G^{m,t}(t,\xi;\tau,\zeta) D_{\zeta}\Psi^{m,t}_T(x)(\tau,\zeta)\\
    &\qquad\qquad\qquad\qquad \cdot D_{p}^{2}H\left(\tau,\zeta,DV^{m,t}(\tau,\zeta)\right)D_{\zeta}\left(D_z^2\overline{V}^{m,t}\right)(\tau,\zeta,z) d\zeta d\tau dm(\xi).
\end{align*}
Again, the study of the well-posedness of Equation \eqref{equation:D_z^2V} can give the well-posedness of this higher-order derivative $D_z^2\dfrac{dV^{m,t}}{d\nu}(s,x)(z)$, which is also the derivative $D_z^2\frac{dU}{d\nu}(s,x,m)(z)$ of the function $U$ defined in \eqref{def:U}. Particularly, the validity of the following Proposition~\ref{thm:D_zV'} for $D_z^2\overline{V}^{m,t}(s,x,z)$ can warrant the third condition in Condition~\ref{Condition:U}. Finally, we give the following result for the well-posedness results for Equations \eqref{equation:D_zV} and  \eqref{equation:D_z^2V}; its proof is similar to that of Theorems~\ref{theo5-2} and \ref{theo5-3}, and is omitted here. 

\begin{proposition}\label{thm:D_zV'}
Suppose that Assumptions~\ref{eq:5-8}, \ref{assumption:h}, \ref{assumption:F}, \ref{assumption:H} and \ref{assumption:regularity}-(i) are valid, then, we have the following results. \\
(a) Equation \eqref{equation:D_zV} is well-posed and it has a solution $D_z\overline{V}^{m,t}$ in $L^{2}\left(t,T;H_{\pi_{\gamma}}^{1}(\brn)\right)$ when the interval $T-t$ is small; indeed, this solution is the derivative of $\overline{V}^{m,t}$ in $z$. The same equation \eqref{equation:D_zV} has a global-in-time solution under Assumption~\ref{assumption:convex}. \\
(b) If moreover Assumption~\ref{assumption:regularity}-(ii) is satisfied, then Equation \eqref{equation:D_z^2V} is well-posed and it has a solution $D_z^2\overline{V}^{m,t}$ in $L^{2}\left(t,T;H_{\pi_{\gamma}}^{1}(\brn)\right)$ when the interval $T-t$ is small; this solution is the twice derivative of $\overline{V}^{m,t}$ in $z$. The same equation \eqref{equation:D_z^2V} has a global-in-time solution under Assumption~\ref{assumption:convex}. 
\end{proposition}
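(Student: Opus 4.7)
The plan is to mimic the fixed-point strategy of Theorems~\ref{theo5-2}, \ref{theo5-3} and \ref{thm:D_zV}, treating \eqref{equation:D_zV} and \eqref{equation:D_z^2V} as linear non-local parabolic equations in $(s,x)$ indexed by the parameter $z$. Fix $m,t$ and note that once $V^{m,t}$ and $\overline{V}^{m,t}$ (provided by Theorems~\ref{theo5-2}--\ref{theo5-3} and \ref{thm:D_zV}) are known, the Green function $G^{m,t}$ and the functions $\Psi^{m,t,s}(x)(\tau,z)$, $\Psi^{m,t}_{T}(x)(\tau,z)$ defined in \eqref{eq:6-32}, \eqref{eq:6-32''} are entirely determined. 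For part~(a), I would introduce, for an ``input'' $W\in L^{2}\bigl(t,T;H^{1}_{\pi_{\gamma}}(\brn)\bigr)$ (viewed for each $z$ as a candidate for $D_{z}\overline{V}^{m,t}(\cdot,\cdot,z)$), the map $W\mapsto W^{\mathrm{out}}$, where $W^{\mathrm{out}}$ is the unique solution of the linear parabolic PDE obtained from \eqref{equation:D_zV} by plugging $W$ into the non-local Green-function source terms $D_{z}\frac{\partial}{\partial m}\frac{dF}{d\nu}(m^{m,t}_{s})(x)(z)$ and $D_{z}\frac{\partial}{\partial m}\frac{dF_{T}}{d\nu}(m^{m,t}_{T})(x)(z)$. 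An analogous map is defined for part~(b) via \eqref{equation:D_z^2V}.

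The next step is to verify the hypotheses of Schauder's theorem on an invariant convex compact subset $B_{T}^{\overline{V}}$ of $L^{2}\bigl(t,T;H^{1}_{\pi_{\gamma}}(\brn)\bigr)$, defined by the same four families of bounds collected in Table~\ref{table:1}: quadratic growth in $x$, gradient linear growth in $x$, continuity in $s$, $L^{2}_{\pi_{\gamma}}$-integrability of $\partial_{s}$ and $D^{2}_{x}$ terms, and local H\"older regularity. For each fixed $z$, the PDE for $W^{\mathrm{out}}(\cdot,\cdot,z)$ is of the generic type \eqref{eq:5-101} (since the drift $D_{p}H(s,x,DV^{m,t})$ satisfies $|g(s,x)|\le C(T)(1+|x|)$ by Assumption~\ref{assumption:H}-(ii) and the growth of $DV^{m,t}$), so Propositions~\ref{prop5-5} and Theorem~\ref{theo5-1} give exactly the required estimates, provided one checks that the source terms and terminal data obey \eqref{eq:5-140}. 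This is where Assumption~\ref{assumption:regularity}-(i), specifically the bounds on $D_{z}\frac{d^{2}F}{d\nu^{2}}$, $D_{z}\frac{d^{2}F_{T}}{d\nu^{2}}$, and the uniform boundedness of $D_{p}^{2}H,D_{x}D_{p}H,D_{x}^{2}H$, enters: one propagates these through $\Psi^{m,t,s}$, $\widetilde{\Psi}^{m,t,s}$, $\Psi^{m,t}_{T}$, $\widetilde{\Psi}^{m,t}_{T}$ and uses the \textit{a priori} bounds for $\overline{V}^{m,t}$ and $D\overline{V}^{m,t}$ furnished by Theorem~\ref{thm:D_zV}. Compactness of $B_{T}^{\overline{V}}$ then follows by Arzel\`a--Ascoli and a tail estimate in $x$ exactly as in the proof of Theorem~\ref{theo5-2}, and continuity of the map $W\mapsto W^{\mathrm{out}}$ is a routine passage to the limit in the linear PDE, controlled by Lebesgue dominated convergence against the weights $\pi_{\gamma}(x)$.

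To conclude part~(a), after producing a fixed point $W^{*}(s,x,z)$, one must identify $W^{*}=D_{z}\overline{V}^{m,t}$. The cleanest way is to differentiate formally the equation \eqref{eq:6-51} satisfied by $\overline{V}^{m,t}$ with respect to $z$: both $D_{z}\overline{V}^{m,t}$ (a posteriori) and $W^{*}$ satisfy the same linear non-local equation \eqref{equation:D_zV}, so by the uniqueness of \eqref{equation:D_zV}, which is itself a linear non-local parabolic PDE with Lipschitz-type source in the unknown (argued as in Subsection~\ref{subsec:global} via the Feynman--Kac/Girsanov representation used for $m^{m,t}_{s}$ in Subsection~\ref{subsec:m}), the two coincide. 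Part~(b) is handled identically, replacing Assumption~\ref{assumption:regularity}-(i) by (ii) to obtain the corresponding $L^{2}_{\pi_{\gamma}}$ and H\"older bounds on $D_{z}^{2}\overline{V}^{m,t}$. The global-in-time statement under Assumption~\ref{assumption:convex} follows because the smallness conditions \eqref{eq:5-14} and \eqref{eq:5-206} on $T-t$ are needed only for the nonlinear estimates on $V^{m,t}$ and $DV^{m,t}$; the equations \eqref{equation:D_zV} and \eqref{equation:D_z^2V} are \emph{linear} in the unknown, so once $V^{m,t}$ is globally controlled via Lemmas~\ref{lem:global:1}--\ref{lem:global:2}, the fixed-point argument goes through on $[t,T]$ for arbitrary $T$.

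The main obstacle I anticipate is the identification step $W^{*}=D_{z}\overline{V}^{m,t}$: a direct $z$-derivative of \eqref{eq:6-51} is only formal until one has established enough joint regularity of $\overline{V}^{m,t}$ in $(x,z)$ to differentiate under the integrals defining $\Psi^{m,t,s}$ and $\widetilde{\Psi}^{m,t,s}$. The standard workaround is an incremental-quotient argument: form $\epsilon^{-1}\bigl(\overline{V}^{m,t}(s,x,z+\epsilon e_{i})-\overline{V}^{m,t}(s,x,z)\bigr)$, show it satisfies a PDE converging (in the weighted $L^{2}$ topology) to \eqref{equation:D_zV}, and use the uniform \textit{a priori} estimates, together with Theorem~\ref{theo5-1} applied $z$-uniformly, to pass to the limit. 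Assumption~\ref{assumption:regularity}-(i) is tailored precisely to furnish the continuity of the inhomogeneous terms in this limiting procedure, and Assumption~\ref{assumption:regularity}-(ii) plays the same role at the second-order level.
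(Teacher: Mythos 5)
Your proposal is correct and takes essentially the same route the paper intends: the paper omits the proof entirely, stating only that it is ``similar to that of Theorems~\ref{theo5-2} and \ref{theo5-3}'' (i.e., the Schauder fixed-point scheme on the compact set of Table~\ref{table:1}, with the \textit{a priori} estimates of Propositions~\ref{prop5-5} and Theorem~\ref{theo5-1} applied to the generic linear equation \eqref{eq:5-101}), which is exactly the strategy you lay out. Your additional care about the identification step $W^{*}=D_{z}\overline{V}^{m,t}$ via incremental quotients, and your observation that linearity in the unknown is what removes the smallness conditions under Assumption~\ref{assumption:convex}, are faithful elaborations of details the paper leaves implicit.
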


As a direct consequence of Propositions~\ref{prop:6-1} and \ref{prop:6-2}, Theorem~\ref{thm:D_zV} and Proposition~\ref{thm:D_zV'}, we have the following corollary.

\begin{corollary}\label{corollary}
Under Assumptions~\ref{eq:5-8}, \ref{assumption:h}, \ref{assumption:F}, \ref{assumption:H} and \ref{assumption:regularity}, $U(s,x,m):=V^{m,s}(s,x)$ is a solution of the master equation \eqref{eq:6-1}, and the value function $\Phi$ is a solution of the Bellman equation \eqref{eq:6-5}, when the interval $T-t$ is small. Furthermore, the master equation and Bellman equation have their respective global-in-time solutions under Assumption~\ref{assumption:convex}.
\end{corollary}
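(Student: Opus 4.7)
The plan is to assemble the corollary directly from the four results cited immediately before it, in a specific order that matches the logical dependencies among the HJB, master, and Bellman equations. Since the corollary is explicitly labelled ``a direct consequence'', the work is essentially bookkeeping: the hard analytical content has already been carried out, and what remains is to verify that the hypotheses of Propositions~\ref{prop:6-1} and \ref{prop:6-2} (namely Condition~\ref{Condition:U}) are indeed furnished by Theorem~\ref{thm:D_zV} and Proposition~\ref{thm:D_zV'} under the assumption set of the corollary.

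First, I would invoke Theorem~\ref{theo5-2} (or Theorem~\ref{theo5-3} in the convex case) to guarantee the existence of $V^{m,t}$ solving the HJB equation \eqref{eq:5-150}, so that $U(s,x,m):=V^{m,s}(s,x)$ is well defined on $[t,T]\times\brn\times\pr_2(\brn)$. Next, I would appeal to Theorem~\ref{thm:D_zV} under Assumptions~\ref{eq:5-8}, \ref{assumption:h}, \ref{assumption:F}, \ref{assumption:H} and \ref{assumption:regularity}-(i) to produce the linear functional derivative $\tfrac{dV^{m,t}}{d\nu}(s,x)(z)=\tfrac{dU}{d\nu}(s,x,m)(z)$ and to Proposition~\ref{thm:D_zV'}(a)-(b) under the additional Assumption~\ref{assumption:regularity}-(ii) to produce the derivatives $D_{z}\tfrac{dU}{d\nu}(s,x,m)(z)$ and $D_{z}^{2}\tfrac{dU}{d\nu}(s,x,m)(z)$. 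Altogether, these three existence results establish Condition~\ref{Condition:U} for $U$.

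With Condition~\ref{Condition:U} in hand, I would then apply Proposition~\ref{prop:6-1} to conclude that $U$ satisfies the master equation \eqref{eq:6-1}; and subsequently apply Proposition~\ref{prop:6-2} to obtain a function $\Phi\in\mathcal{S}$, associated with this $U$ through the anti-derivative construction in \eqref{eq:6-4}, solving the Bellman equation \eqref{eq:6-5}. The identification of $\Phi$ with the value function of the MFTC problem \eqref{eq:2-2}-\eqref{eq:2-6} is not part of the corollary's statement, but if desired it follows from Theorem~\ref{theo6-1} together with uniqueness for \eqref{eq:6-5}.

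Finally, the dichotomy between local-in-time and global-in-time conclusions is handled by selecting the corresponding variant of each input: for the small-$(T-t)$ case, one takes the local-in-time statements of Theorem~\ref{theo5-2}, Theorem~\ref{thm:D_zV} and Proposition~\ref{thm:D_zV'}, which are all valid under the restrictions \eqref{eq:5-14} and \eqref{eq:5-206}; for the global-in-time case, one instead uses Theorem~\ref{theo5-3} together with the global-in-time parts of Theorem~\ref{thm:D_zV} and Proposition~\ref{thm:D_zV'}, all of which hold under Assumption~\ref{assumption:convex} (noting that this assumption subsumes Assumption~\ref{assumption:l} and hence implies Assumption~\ref{assumption:H}-(ii), so no regularity is lost). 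No step of the argument is truly an obstacle; the only care required is to make sure that in the global-in-time setting each cited result is quoted in its convexity-based form so that the small-time conditions \eqref{eq:5-14} and \eqref{eq:5-206} need not be imposed.
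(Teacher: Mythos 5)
Your proposal is correct and follows exactly the route the paper intends: the paper presents the corollary as a direct consequence of Propositions~\ref{prop:6-1} and \ref{prop:6-2}, Theorem~\ref{thm:D_zV} and Proposition~\ref{thm:D_zV'}, and your assembly (existence of $V^{m,t}$ via Theorem~\ref{theo5-2} or \ref{theo5-3}, verification of Condition~\ref{Condition:U} via Theorem~\ref{thm:D_zV} and Proposition~\ref{thm:D_zV'}, then application of Propositions~\ref{prop:6-1} and \ref{prop:6-2}, with the local/global dichotomy handled by quoting each input in its corresponding form) is precisely that chain spelled out. No gaps.
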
 

For the uniqueness results for Equations \eqref{eq:6-51}, \eqref{equation:D_zV} and \eqref{equation:D_z^2V} and also the master equation \eqref{eq:6-1} and the Bellman equation \eqref{eq:6-5}, we refer to the method in our previous work \cite{AB6,AB10,AB5} for the uniqueness results for equations arsing in the mean field theory; and these results are not limited within the scope of using probabilistic methods. Similar arguement can be written down from the analytical perspective as shown in this whole article, and we shall not repeat further here.

\section*{Acknowledgement}

Alain Bensoussan is supported by the National Science Foundation under grant NSF-DMS-2204795. Ziyu Huang acknowledges the financial supports as a postdoctoral fellow from Department of Statistics and Data Science of The Chinese University of Hong Kong. Phillip Yam acknowledges the financial supports from HKGRF-14301321 with the project title ``General Theory for Infinite Dimensional Stochastic Control: Mean field and Some Classical Problems'', HKGRF-14300123 with the project title ``Well-posedness of Some Poisson-driven Mean Field Learning Models and their Applications'', and HKGRF-14300025 with the project title ``A Generic Theory for Stochastic Control against Fractional Brownian Motions''. The work described in this article was also supported by a grant from the Germany/Hong Kong Joint Research Scheme sponsored by the Research Grants Council of Hong Kong and the German Academic Exchange Service of Germany (Reference No. G-CUHK411/23). He also thanks The University of Texas at Dallas for the kind invitation to be a Visiting Professor in Naveen Jindal School of Management.


\footnotesize
\addcontentsline{toc}{section}{References}




\end{document}